\newtheorem{mydef}{Definition}[section]
\newtheorem{theorem}{Theorem}[section]
\newtheorem{lemma}{Lemma}[section]
\newtheorem{remark}{Remark}[section]
\newtheorem{assumption}[theorem]{Assumption}
\newcommand{\IND}{\mathbf{1}}
\newcommand{\R}{\mathbb{R}}
\newcommand{\RD}{\mathbb{R}^d}
\newcommand{\N}{\mathbb{N}}
\newcommand{\Z}{\mathbb{Z}}
\newcommand{\cA}{\mathcal{A}}
\newcommand{\cC}{\mathcal{C}}
\newcommand{\cF}{\mathcal{F}}
\newcommand{\cE}{\mathcal{E}}
\newcommand{\cP}{\mathcal{P}}
\newcommand{\cX}{\mathcal{X}}
\newcommand{\cT}{\mathcal{T}}
\newcommand{\tb}{\tilde{b}}
\newcommand{\scrA}{\mathscr{A}}
\newcommand{\scrU}{\mathscr{U}}
\newcommand{\frkA}{\mathfrak{A}}
\newcommand{\bes}{\begin{equation*}}
\newcommand{\ees}{\end{equation*}}
\newcommand{\beas}{\begin{eqnarray*}}
\newcommand{\eeas}{\end{eqnarray*}}
\newcommand{\bea}{\begin{eqnarray}}
\newcommand{\eea}{\end{eqnarray}}
\newcommand{\be}{\begin{equation}}
\newcommand{\ee}{\end{equation}}
\newcommand{\bbP}{\mathbb{P}}
\newcommand{\bbE}{\mathbb{E}}
\newcommand{\bbQ}{\mathbb{Q}}
\newcommand{\bbS}{\mathbb{S}}
\newcommand{\bbW}{\mathbb{W}}
\newcommand{\bh}{\mathbf{h}}
\newcommand{\bff}{\mathbf{f}}
\newcommand{\bp}{\mathbf{p}}
\newcommand{\bw}{\mathbf{w}}
\newcommand{\Int}{[0,1]}
\newcommand{\ind}{\mathbf{1}}
\newcommand{\OUBR}{{}^\alpha\bbP^{xy}}
\newcommand{\BRG}{\bbP^{xy}}
\newcommand{\zz}{z \to z'}
\newcommand{\jj}{\bar{j}}
\newcommand{\cc}{\mathbf{c}}
\newcommand{\bfe}{\mathbf{e}}
\newcommand{\SRW}{\bbS}
\newcommand{\qform}{\Sigma^{\alpha}_{I}}
\newcommand{\qmform}{\Sigma^{\alpha}_{\Pi^{<t}_m}}
\newcommand{\Hess}{\mathbf{Hess}}
\newcommand{\WRG}{\bbW^{xy}}
\newcommand{\ces}{\cE^*}
\author{Giovanni Conforti}
\keywords{bridges, Concentration of measure, reciprocal characteristics, tail asymptotic}
\date{February,22 2016}
\subjclass[2010]{60J27,60J75}
\begin{document}

\title{Fluctuations of bridges, reciprocal characteristics and concentration of measure}
\newcommand{\Addresses}{{
  \bigskip
  \footnotesize

  Giovanni~Conforti, \textsc{Department of Mathematics, Universit\"at Leipzig,
    Germany}\par\nopagebreak
  \textit{E-mail address},  \texttt{giovanniconfort@gmail.com}

}}
\maketitle
\begin{abstract}
Conditions on the generator of a Markov process to control the fluctuations of its bridges are found. In particular, continuous time random walks on graphs and gradient diffusions are considered. Under these conditions, a concentration of measure inequality for the marginals of the bridge of a gradient diffusion and refined large deviation expansions for the tails of a random walk on a graph are derived. In contrast with the existing literature about bridges, all the estimates we obtain hold for non asymptotic time scales. New concentration of measure inequalities for pinned Poisson random vectors are also established. The quantities expressing our conditions are the so called \textit{reciprocal characteristics} associated with the Markov generator.
\end{abstract}
\tableofcontents

\section{Introduction}
In this paper we study quantitatively bridges of Markov processes over the time-interval $\Int$. As a guideline of our investigations, independently from the details of the model, we have in mind the sketch of the motion of a bridge as divided into two symmetric phases: at first one observes an expansion phase, in which the bridge, starting from its deterministic initial position, increases its randomness. After time $1/2$, a second contraction phase takes place, in which the damping effect of the pinning at the terminal time is so strong that randomness decreases, and eventually dies out. Moreover, one also expects that the two phases enjoy some symmetry with respect to time reversal. To summarize, one can say that the motion of  bridge resembles that of an accordion.
The aim of this paper is to obtain a quantitative explanation of this picture. This means that we consider a Markov process and try to understand
how its semimartingale characteristics should look like in order to observe bridges where the influence of pinning over randomness is stronger than that of a reference model, for which  computations can be carried out in explicit form.
This problem, although quite natural, seems to have received very little attention so far. As we shall see, some precise answers can be given. It is interesting to note that the quantities expressing our conditions are not related to those used to measure the speed of convergence to equilibrium, as one might expect at first glance ( see Remark \ref{remgamma2} for a comparison with the $\Gamma_2$ condition of Bakry and \'Emery \cite{BAKEM} ).\\
There are many possible quantities that could be used to estimate the balance of power between pinning and randomness and make precise mathematical statements. Some of them are discussed in the sequel, depending on the model: in this paper, Brownian diffusions with gradient drift and continuous time random walks on a graph are considered.\\
Our results take the form of comparison theorems, which yield quantitative information on the bridge at non asymptotic  time  scales. In Theorem \ref{accordeon} we find conditions on the potential of a gradient diffusion for its bridges to have marginals  with better concentration properties than those of an Ornsetin Uhlenbeck bridge. This is one of the main novelties with respect to the existing literature about bridges where, to the best of our knowledge, only Large Deviations-type estimates have been proved, and mostly in the short time regime, see among others \cite{bailleul2013large}, \cite{bailleul2015small}, \cite{baldi2002asymptotics}, \cite{baldi2014large},  \cite{dawson1990schrodinger}, \cite{privault2015large}, \cite{wittich2005explicit} and  \cite{yang2014large}.  The proof of this result is done by first showing an ad hoc Girsanov formula for bridges, which differs from the usual one. We then employ some tools developed in  \cite{BrLieb76} to transfer log concavity of the density from the path space to the marginals, and use the well known properties of log concave distributions. \\
Theorems \ref{squarelatticebound} and \ref{treebound} concern continuous time random walks on graphs with constant speed: we find conditions on the jump rates under which the marginals of the bridge have lighter tails than those of the simple random walk. For their proof we rely on some elementary, though non trivial combinatorial constructions that allow to control the growth of the \textit{reciprocal characteristics} associated with the cycles of the graph, as the length of the cycles increases.
The study of bridges of continuous random walks brings naturally to consider pinned Poisson random vectors: we derive concentration of measure inequality for these distributions, using a Modified Log Sobolev Inequality and an interpolation argument.  
\subsection*{Reciprocal characteristics} An interesting aspect is that the conditions which we impose to derive the estimates are expressed in terms of the so called  \textit{reciprocal characteristics}.
 The reciprocal characteristics of a Markov processare a set of invariants which fully determine the family of bridges associated with it:
such a concept has been introduced by Krener in \cite{Kre88}, who was interested in developing a theory of stochastic differential equations of second order, motivated by problems in Stochastic Mechanics.
Several authors then contributed to the development of the theory of reciprocal processes and second order differential equations. Important contributions are those of  Clark \cite{Cl91}, Thieullen \cite{Th93}, L\'evy and Krener \cite{LK96}, and Krener \cite{Kre97}. Roelly and Thieullien in \cite{RT02}, \cite{RT05} introduced a new approach based on integration by parts formulae. This approach was used to study reciprocal classes of continuous time jump processes in \cite{CLMR}, \cite{CDPR}, \cite{CR}.
The precise definitions are given at Definition \ref{def:recchardiff} and \ref{def:recchgr} below. However, let us give some intuition on why they are  an interesting object to consider to bring some answers to the aforementioned problems. For simplicity, we assume that $\bbP^x$ is a diffusion with drift $b$ and unitary dispersion coefficients.
It is well known that the bridge $\bbP^{xy}$ is another Brownian diffusion with unitary diffusion matrix and whose drift field $\tb$ admits the following representation:
\bes
\tb (t,z) = b(t,z) + \nabla \log h(t,z), 
 \ees
 where $h(t,z)$ solves the Kolmogorov backward PDE:
 \bes
 \partial_t h(t,z) + b \cdot \nabla h(t,z) + \frac{1}{2} \Delta h(t,z) = 0 , \quad \lim_{t \uparrow 1}h(t,z)= \IND_{z=y} 
 \ees
This is the classical way of looking at bridges as $h$-transforms, which goes back to Doob \cite{Doob1957}. However, it might not be the most convenient one to perform explicit computations. The first reason is that $h$ is not given in explicit form. Moreover, this representation does not account for the time symmetric nature of bridges. Actually, the problem of restoring this time symmetry was one of the motivations for several definitions of conditional velocity and acceleration for diffusions in the context of stochastic mechanics, see e.g. \cite{Nel67}, \cite{CruZa91}, \cite{Th93}.
The theory of reciprocal processes proposes a different approach  to bridges: there one looks for a family of (non-linear) differential operators $\frkA$ with the property that the system of equations
\bes
\mathscr{A} \tb = \mathscr{A} b, \quad \scrA \in \frkA
\ees
together with some boundary conditions characterizes the drift $\tb$ of $\bbP^{xy}$. For diffusions, they were computed for the first time by Krener in \cite{Kre88}, and subsequently used by Clark \cite{Cl91} to characterize reciprocal processes.

For instance, in the case of 1-dimensional Brownian diffusions we have $\frkA = \left\{ \scrA \right\}$ with 
\bes
\scrA b = \frac{1}{2} \partial_{xx} b + b \partial_x b +  \partial_t b
\ees
The advantage of this approach is to show that the drift of the bridge $\tb$ depends on $b$ only through the subfields  $\scrA b$, for $\scrA \in \frkA$, and not on anything else. In other words: two different processes with the same reciprocal characteristics share have identical bridges (for results of this type, see \cite{Blee},   \cite{Fitz}, \cite{CLMR}, \cite{RT02}, \cite{RT05}, \cite{CDPR}, \cite{CR}, \cite{CL15}). Therefore, one sees that any optimal condition to control the fluctuations of $\bbP^{xy}$ shall be formulated in terms of the characteristics since other conditions will necessarily involve some features of $b$ which play no role in the construction of $\bbP^{xy}$. This simple observation already rules out some naive approaches to the problems studied in this paper. Indeed one might observe that when $\bbP^x$ is time homogeneous we have:
\bes
\BRG(X_t \in dz ) \propto \bbP^x(X_t \in z+ dz) \, \bbP^{z}(X_{1-t} \in y+ dy )
\ees
 and then an optimal criterion to control the fluctuations of the marginals of $\bbP$ suffices. But since any known condition to bound them is not expressed in terms of the reciprocal characteristics, this strategy has to be discarded. 
Reciprocal characteristics enjoy a probabilistic interpretation: they appear as the coefficient of the leading terms in the short time expansion of either the conditional probability of some events ( see \cite{CL15} for the discrete case) or the conditional mean acceleration (see \cite{Kre97} in the diffusion setting).
 Indeed, one can view the results of this article as the global version of the "local" estimates which appear in the  works above. A first result in this direction has been obtained in \cite{conforti2016counting}, where a comparison principle for bridges of counting processes is proven.
Reciprocal characteristics have been divided into two families, \textit{harmonic characterisitcs} and \textit{rotational (closed walk)} characteristics.
We discuss the role of harmonic characteristics in the diffusion setting and the role of rotational characteristics for continuous time random walks on graphs.
\subsubsection*{Organization of the paper}
In Section 2 and 3 we present our main results for diffusions and random walks. They are main results which are Theorem \ref{accordeon}, Theorem \ref{t70}, Theorem \ref{squarelatticebound} and Theorem \ref{treepatch}. Section $4$ is devoted to proofs. We collect in the Appendix some results on which we rely for the proofs.
\subsubsection*{General notation}
We consider Markov processes over $[0,1]$ whose state space $\cX$ is either $\R^d$ or the set of vertices of a countable directed graph.
We always denote by $\Omega$ the cadl\'ag space over $\cX$, by $(X_t)_{0\leq t \leq 1}$ the canonical process, and by $ \cP(\Omega)$ the space of probability measures over $\Omega$ . On $\Omega$ a Markov probability measure $\bbP$ is given, and we study its bridges. In our setting, bridges will always be well defined for \textit{every} $x,y \in \cX^2$ and not only in the almost sure sense. We will make clear case by case why this is possible. As usual $\bbP^x$ is $\bbP( \cdot | X_0 =x)$, $\bbP^{xy}$ is the $xy$ bridge, $\bbP^{xy} := \bbP(\cdot | X_0 =x, X_1 =y)$. For $I\subseteq \Int$, we call $X_{I}$ the collection $(X_t)_{t \in I}$ and the image measure of $X_{I}$ is denoted $\bbP_{I}$. Similarly, we define $\bbP^x_{I}$, and $\bbP^{xy}_{I}$. For a general  $\bbQ \in \cP(\Omega)$, expectation under $\bbQ$ is denoted $\bbE_{\bbQ}$. We use the notation $\propto$ when two functions differ only by a multiplicative constant.
\section{Bridges of gradient diffusions: concentration of measure for the marginals}
 
 \subsubsection*{Preliminaries}
We consider gradient-type diffusions. The potential $U$ is possibly time dependent and satisfies one among hypothesis (2.2.5) and (2.2.6) of Theorem 2.2.19 in \cite{royer2007initiation}, which ensure existence of solutions for
\be\label{eq:grdiff}
d X_t = - \nabla U (t,X_t)dt + dB_t, \quad X_0 =x.
\ee
Bridges of Brownian diffusions are well defined for any $x,y \in \R^d$. This fact is ensured by \cite[Th.1]{chaumont2011markovian} and the fact that $\bbP$ admits a smooth transition density. 
A special notation is used for Ornstein-Uhlenbeck processes. We use  $^{\alpha}\bbP^{x}$ for the law of :
\be\label{eq:OUprc}
d X_t = - \alpha \, X_t  dt  + d B_t, \quad X_0 = x 
\ee
where $\alpha>0$ is a positive constant. $\OUBR$ is then the $xy$ bridge of $^{\alpha}\bbP^{x}$
Let us give some standard notation. For $v \in \RD$, $v^T$ is the transposed vector. If $w$ is another vector in $\RD$, we denote the inner product of $v$ and $w$ by $v \cdot w$. Similarly, if $H$ is a matrix and $v$ a vector, the product is denoted $H \cdot v$. The Hessian matrix of a function $U$ is denoted $\Hess(U)$, and by $\Hess(U) \geq \alpha \, \mathbf{id} $ we mean, as usual, that
\bes
\inf_{v: v \cdot v =1 } v^T \cdot \Hess(U)(z) \cdot v \geq \alpha
\ees
The norm of $v\in \RD$ is $\| v\|$.
 Let us now give definition of reciprocal characteristics for gradient diffusions. It goes back to Krener \cite{Kre88}.
 \begin{mydef}\label{def:recchardiff}
Let $U: \Int \times \RD \rightarrow \R$ be a smooth potential. We define $\scrU: [0,1] \times \RD \rightarrow \R$ as:
\be\label{e16}
\scrU (t,z) := \big[  \frac{1}{2} \| \nabla U\|^2 - \partial_t U - \frac{1}{2}\Delta U \big] (t,z) 
\ee
 The \underline{harmonic characteristic} associated with $U$ is the vector field $\nabla \scrU$.
  \end{mydef}
\subsubsection*{Measuring the fluctuations}
Consider the bridge-marginal $\bbP^{xy}_t$. We denote its density w.r.t. to the Lebesgue measure by $p^{xy}_t(z)$. As an indicator for the "randomness" of $\bbP^{xy}_t$ we use $\gamma(t)$, defined by:
\be\label{e90}
\gamma(t) = \sup \{ \beta : -\Hess(\log p^{xy}_t)(z) \geq \beta \mathbf{id} \}
\ee
It is well known that lower bounds on $\gamma(t)$ translate into concentration properties for $\bbP^{xy}_t$, see Theorem 2.7 of \cite{Led01}. The better the bound, the stronger the concentration. 
In the Ornstein Uhlenbeck case, $\gamma(t):= \gamma_{\alpha}(t)$ can be explicitly computed. The actual computation will be carried out in the proof of Theorem \ref{accordeon}. We have:
\be\label{e40}
\gamma_{\alpha}(t) =\frac{2\alpha(1-\exp(-2\alpha))}{(1-\exp(-2 \alpha t)) (1-\exp(-2 \alpha(1-t) ) )} 
\ee
Note that $\gamma_{\alpha} $ obeys few stylized facts:

\begin{enumerate}[(i)]
\item 
It is symmetric around $1/2$: this reflects the time symmetry of the bridge.
\item It converges to $+ \infty$ as $t$ converges to either $0$ or $1$. This is due to the pinning.
\item  $\gamma_{\alpha}$ is convex in $t$. This also agrees with the description of the dynamics of a bridge we sketched in the introduction. Convexity reflects the fact that, as time passes, the balance of power between pinning and randomness goes in favor pinning , whose impact on the dynamics grows stronger and stronger, whereas 
the push towards randomness stays constant, since the diffusion coefficient does not depend on time.
\item It is increasing in $\alpha$.
\end{enumerate}
 Theorem \ref{accordeon} is a comparison theorem for $\gamma(t)$. We show that if the Hessian of $\scrU$ (see \eqref{e16}) enjoys some convexity lower bound, say $\frac{1}{2}\alpha^2$, then $\gamma(t)$ lies above $\gamma_{\alpha}(t)$: this means that $\bbP^{xy}$ is more concentrated than $^{\alpha}\bbP^{xy}$. 
\begin{theorem}\label{accordeon}
Let $\bbP^x$ be the law of \eqref{eq:grdiff} and $\scrU$ be defined at \eqref{e16}.
If, uniformly in $r \in [0,1], z \in \RD$:
\be\label{e17}
\Hess( \scrU )(r,z) \geq \frac{\alpha^2 }{2} \, \mathbf{id}
\ee
then the following estimate holds for any $t \in [0,1]$, and any $1$-Lipschitz function $f$:
\bes
 \bbP^{xy}\Big(f(X_t) \geq\bbE_{\bbP^{xy}}(f(X_t)) + R \Big) \leq \exp \Big(-\frac{1}{2}{ }\gamma_{\alpha}(t) R^2\Big) 
\ees
where $\gamma_{\alpha}(t)$ is defined at \eqref{e40}.
\end{theorem}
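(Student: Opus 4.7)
My plan is to represent $\bbP^{xy}$ as a log-concave perturbation of the Brownian bridge on path space and then transfer log-concavity to the time-$t$ marginal by a Brascamp--Lieb/Pr\'ekopa-type argument, before invoking the standard concentration inequality for strongly log-concave measures.

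Step one is a bridge-adapted Girsanov formula. Starting from the usual Cameron--Martin--Girsanov identity
\bes
\frac{d\bbP^{x}}{d\bbW^{x}} = \exp\Big(-\int_0^1 \nabla U(t,X_t)\cdot dX_t - \tfrac{1}{2}\int_0^1 \|\nabla U(t,X_t)\|^2\, dt\Big),
\ees
I apply It\^o's formula to $U(t,X_t)$ under the Wiener measure $\bbW^x$ in order to eliminate the stochastic integral. Using $\scrU = \tfrac{1}{2}\|\nabla U\|^2 - \partial_t U - \tfrac{1}{2}\Delta U$, one finds
\bes
\frac{d\bbP^{x}}{d\bbW^{x}}(\omega) = \exp\Big(U(0,x)-U(1,\omega(1))-\int_0^1 \scrU(t,\omega(t))\,dt\Big).
\ees
Conditioning on $X_1=y$ (which is legitimate since $\bbP$ has a smooth transition density) absorbs the terminal boundary term into the normalisation, yielding
\bes
\frac{d\bbP^{xy}}{d\bbW^{xy}}(\omega) = \frac{1}{Z_{x,y}}\exp\Big(-\int_0^1 \scrU(t,\omega(t))\,dt\Big).
\ees
This is the ``ad hoc'' Girsanov formula referred to in the introduction, and it displays the reciprocal characteristic $\scrU$ as the sole driver of the bridge's law relative to the free Brownian bridge.

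Step two is to observe log-concavity on path space. Identifying pinned paths with the Cameron--Martin space $H^1_0(0,1)$, the Brownian bridge $\bbW^{xy}$ is a centred Gaussian whose precision operator is $-\partial_t^2$ with Dirichlet boundary conditions. The second variation of $\omega\mapsto \int_0^1 \scrU(t,\omega(t))\,dt$ in direction $h$ is $\int_0^1 h^T\Hess(\scrU)(t,\omega(t))\,h\,dt$, which by \eqref{e17} dominates $\tfrac{\alpha^2}{2}\int_0^1 \|h\|^2\,dt$. Hence $\bbP^{xy}$ is strongly log-concave on path space, dominated by the quadratic form associated with the Sturm--Liouville operator $-\partial_t^2 + \tfrac{\alpha^2}{2}\mathbf{id}$.

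Step three is to transfer this path-space log-concavity to the marginal at time $t$ using tools from \cite{BrLieb76}. I discretise along a partition containing $t$: the joint density of $(X_{t_0},\dots,X_{t_N})$ is then strongly log-concave with a Hessian equal to a tridiagonal ``discrete Laplacian'' plus a diagonal contribution from $\Hess(\scrU)$. By Brascamp--Lieb/Pr\'ekopa, the log-concavity constant of the marginal in the $X_t$ slot is bounded below by the Schur complement of this matrix, which as the mesh refines converges to the precision at $(t,t)$ of the Green's function of $-\partial_t^2 + \tfrac{\alpha^2}{2}\mathbf{id}$ on $[0,1]$ with Dirichlet boundary conditions. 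A direct ODE calculation identifies this precision as $\gamma_\alpha(t)$ given by \eqref{e40} (the same Green's function computation also proves \eqref{e40} by specialising to the Ornstein--Uhlenbeck case, where the inequality $\Hess(\scrU_\alpha)\geq \tfrac{\alpha^2}{2}\mathbf{id}$ holds and the marginal is exactly Gaussian). Consequently $\Hess(-\log p^{xy}_t)(z)\geq \gamma_\alpha(t)\mathbf{id}$, so that $\bbP^{xy}_t$ is $\gamma_\alpha(t)$-log-concave on $\RD$; the Gaussian tail bound for $1$-Lipschitz $f$ then follows from \cite[Thm.~2.7]{Led01}.

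The main obstacle is Step three: making the Brascamp--Lieb marginalisation rigorous in infinite dimensions and correctly identifying the limiting Schur complement. Once the finite-dimensional inequality is set up with the right tridiagonal precision matrix and the mesh-refinement limit is controlled (which is straightforward for Gaussian measures but needs care in presence of a non-quadratic $\scrU$), the remaining work is the elementary Sturm--Liouville computation for $-\partial_t^2 + \tfrac{\alpha^2}{2}\mathbf{id}$, after which \eqref{e40} drops out and concentration is immediate.
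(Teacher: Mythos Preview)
Your proposal is correct in outline and uses the same three ingredients as the paper: a bridge Girsanov formula exposing $\scrU$, a Brascamp--Lieb transfer of log-concavity to the time-$t$ marginal, and \cite[Thm.~2.7]{Led01}. The execution differs in two places. For Step one, the paper does not use It\^o's formula; it instead verifies that the candidate density $Z^{-1}\exp\bigl(-\int_0^1\scrU(t,X_t)\,dt\bigr)$ satisfies the Roelly--Thieullen duality characterisation of the bridge (Theorem~\ref{IBPF} in the appendix). Your route is arguably more direct.

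The substantive difference is in Step three. The paper bypasses your ``main obstacle'' entirely: instead of bounding the full path-space precision and extracting a Schur complement, it passes from the Brownian bridge to the Ornstein--Uhlenbeck bridge as reference, obtaining $\frac{d\bbP^{xy}}{d\,{}^{\alpha}\bbP^{xy}} = \exp\bigl(\int_0^1 V(t,X_t)\,dt\bigr)$ with $V(t,\cdot)=\tfrac{\alpha^2}{2}\|\cdot\|^2-\scrU(t,\cdot)-\log Z$ concave by the hypothesis. The Markov property then factorises the marginal Radon--Nikodym derivative $\frac{d\bbP^{xy}_t}{d\,{}^{\alpha}\bbP^{xy}_t}(z)$ into a forward and a backward conditional expectation; each factor is shown to be log-concave in $(x,z)$ (resp.\ $(z,y)$) by discretising in time and applying Theorem~\ref{thm:logconcpres} with the finite-dimensional OU quadratic form playing the role of $\Sigma$. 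This immediately gives $-\Hess(\log p^{xy}_t) \geq -\Hess(\log {}^{\alpha}p^{xy}_t)=\gamma_\alpha(t)\,\mathbf{id}$, the last equality being an elementary computation with the explicit OU transition density. In effect the OU bridge \emph{is} your Green's-function computation, so the paper never has to control a Schur-complement limit: the discretisation is used only to establish log-concavity of a ratio (which is stable under pointwise limits), not to identify a constant. Your approach would work, but the comparison-to-OU trick is what removes the technical burden you flagged.
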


The proof of Theorem \ref{accordeon} uses three main tools:  the first one is an integration by parts formula for bridges of Brownian diffusions due to Roelly and Thieullen, see \cite{RT02} and \cite{RT05}. Such formula has the advantage of elucidating the role of reciprocal characteristics, and we reported it in the appendix. 
The second one is a statement about the preservation of strong log concavity due to Brascamp and Lieb \cite{BrLieb76}. This theorem is a quantitative version of the well known fact that marginals of log concave distributions are log concave. We refer to Remark \ref{noway} for more comparison between Theorem \ref{accordeon} with some of the results of \cite{BrLieb76}.
Finally we will profit from the well known concentration of measure properties of log concave distributions, for which we refer to \cite[Chapter 2]{Led01}.
\begin{remark} 
The condition  \eqref{e17} does not depend on the endpoints $(x,y)$ of the bridge
\end{remark}

\begin{remark}
The estimates obtained here are sharp, as the Ornstein Uhlenbeck case demonstrates: a simple computations shows that $\frac{\alpha^2}{2}  \mathbf{id} $ is indeed the Hessian of $\scrU$ when $\bbP^x= ^{\alpha}\bbP^x$. 
\end{remark}
\begin{remark}\label{remgamma2}
The $\Gamma_2$ condition of Bakry and \'Emery in this case reads  as
\bes
\Hess(U) \geq \alpha \, \mathbf{id}
\ees
which is clearly very different from \eqref{e17}. In particular, \eqref{e17} involves derivatives of order  up to four. However, a simple manipulation of Girsanov's theorem formally relates the two conditions. 
Consider the density  $M$ of $\bbP^x$ with respect to the Brownian motion started at $x$. We have by Girsanov's formula (for simplicity, we assume $U$ not to depend on time):
\bes
M= \exp\left(- \int_{0}^1 \nabla U(X_t) \cdot dX_t - \frac{1}{2} \int_{0}^1 \| \nabla U \|^2(X_t) dt \right)
\ees
A standard application of It\^o formula allows to rewrite $M$ as:
\bes
\exp\left(- \underbrace{U(X_1)}_{\Gamma_2} + U(x) - \frac{1}{2} \int_{0}^1 \underbrace{\| \nabla U \|^2(X_t) - \Delta U(X_t) }_{ = 2 \scrU}  dt \right)
\ees
Imposing convexity on the first term, one obtains $\Gamma_2$, whereas imposing convexity on the integrand, yields \eqref{e17}. In this sense, the two condition are complementary: what is "seen" from one, is not seen from the other, and viceversa.
\end{remark}
\begin{remark}
Many authors have investigated Logarithmic Sobolev inequalities for the Brownian bridge as a law on path space, or, more generally for the Brownian motion on loop spaces, see e.g. \cite{fang1999integration}. Therefore, starting from those inequalities one should be able to obtain concentration of measure results for the Brownian bridge. Our approach is not based on such inequalities because, to the best of our understanding, they are limited to the bridge of the Brownian motion and we consider bridges of gradient-type SDEs. Moreover, we do not know how precise the concentration bounds derived from these SDEs would be concerning the marginals and there does not seem to be a criterion to construct measures which have concentration properties at least as good as the Brownian bridge measure. This is exactly what we do in this paper. On the other hand, these inequalities are available for curved spaces, a case which we do not touch.
\end{remark}
\section{Continuous time random walks}
In this section we prove various estimates for the bridges of continuous time random walks with constant speed. These estimate are obtained by imposing conditions on the \textit{closed walk characteristics} associated with the random walk. It is shown in \cite[Th. 2.4]{CL15} that the closed walk characteristics of a constant speed random walk fully determine its bridges.
\subsubsection*{Preliminaries}
Let $\cX$ be a countable set and $ \mathcal{A} \subset\cX^2$.
 The \emph{directed graph}  associated with $\mathcal{A}$ is defined by means of the relation $\to$ . For all $z,z'\in\cX^2$ we have $\zz$ if and only if $(z,z')\in \mathcal{A}.$   We denote $(\cX^2,\to)$ this directed graph, say that any $(z,z')\in \mathcal{A}$ is an arc and  write $(\zz)\in \mathcal{A}$ instead of $(z,z')\in \mathcal{A}.$  For any $n\ge1$ and $x_0,\dots,x_n\in\cX$ such that $x_0\to x_1$, $x_1\to x_2,\ \cdots,\ x _{ n-1}\to x_n$, the ordered sequence
 $(x_0,x_1,\dots,x_n)$ is called a \emph{walk}. 
We adopt the notation $	
\bw=(x_0\to x_1 \to \cdots\rightarrow x_n). 
$	When $x_n=x_0$,  the  walk $\cc=(x_0\to x_1 \to \cdots\rightarrow x_n=x_0)$ is said to be \textit{closed}. The  length $n$ of $\bw$  is denoted by $\ell(\bw).$
We introduce a continuous time random walk $\bbP^x$  with intensity of jumps $j:\cA \rightarrow \R_{+}$. $j(z \to z')$ is the rate at which the walk jumps for $z$ to $z'$. To ensure existence of the process, we make some standard assumptions on $j$, and $(\cX,\to)$, which are detailed at Assumption \ref{as-03} and Assumption\ref{as-01}. 
These assumptions also ensure that the bridge is defined between any pair of vertices $x,y \in \cX$.
In this paper, we consider constant speed random walks (CSRW). This means that the function $z \mapsto \jj(z) = \sum_{z': \zz} j(\zz)$ is a constant. 
 Let us define the closed walk characteristics associated with $j$. We refer to \cite{CDPR},  \cite{CL15}, \cite{CR} for an extensive discussion.
 \begin{mydef}\label{def:recchgr}
Let $(\cX,\to)$ be a graph satisfying Assumption \ref{as-03} and $j$ be a jump intensity satisfying Assumption \ref{as-01}. For any $t \in (0,1)$ and any closed walk $\cc=(x_0\to \cdots\to x_{n}=x_0)$   we define the corresponding \underline{closed walk characteristic} as:
\be\label{eq30}
\Phi_j(\cc) := \prod_{i=0}^{n-1} j(x_i \to x_{i+1}) .
\ee
\end{mydef}
\subsection{Concentration of measure for pinned Poisson random vectors}\label{sub1}
\subsubsection*{A simple question}
We fix $k \in \N$ and consider the graph $(\cX, \to)$ where $\cX = \Z$, and $z \to z' $ if and only if $z' = z -1$ or $z' = z+k$. We consider a random walk $\bbP$ with time and space-homogeneous rates:
\bes j(z \to z+k) \equiv j_k , \quad j(z \to z-1) \equiv j_{-1} \quad \forall z \in \Z \ees
The simple\footnote{see Definition \ref{defs-01} for the meaning of simple walk}  closed walks of $(\cX,\to)$ are  of the form 
$$ \cc = (x \to x-1 \to x-2 \to .. \to x-k \to x)$$
for some $x \in \Z$ and, because of the homogeneity of the rates, we  have
$$ \forall  \cc \, \text{simple closed walk}, \quad \Phi_j(\cc) \equiv j^k_{-1}j_k:=\Phi $$
We introduce random variables $ N^{k}$ and $N^{-1}$ which count the number of jumps along arcs of the form $(x \to x+k)$ and $(x \to x-1)$ respectively. Obviously, under $\bbP^0$ the vector $(N^{k},N^{-1}) $ is a two dimensional vector with independent components following a Poisson law of parameter $j_{k}$ and $j_{-1}$ respectively. 
Let us consider the $00$ bridge of $\bbP$, $\bbP^{00}$. The distribution of $N^{k}$ is that of  the first coordinate of a Poisson random vector conditioned to belong to an affine subspace, precisely $\{(n^k, n^{-1}) \in \N^2 : k \, n^k - n^{-1} = 0 \}$. We call this distribution $\rho_{\Phi}$.
\be \label{eq21}
\rho_{\Phi} ( \cdot ) = \bbP^{00}(N^k  \in \cdot ) = \bbP^{0} \left( N^k \in \cdot \Big | k N^{k} - N^{-1}=0\right) 
\ee
We aim at establishing a concentration of measure inequality for $\rho_{\Phi}$. This is very natural in the study of bridges: one wants to know how many jumps of a certain type the bridge performs. The role of pinning against randomness should be visible in the concentration properties of this distribution.
This task is not trivial because $\rho_{\Phi}$ is no longer a Poissonian distribution. This is in contrast with the Gaussian case, where pinning a Gaussian vector to an affine subspace gives back a Gaussian vector. 
To gain some insight on what rates to expect let us recall Chen's characterization of the Poisson distribution (see \cite{Chen}) of parameter $\lambda$, which we call  $\mu_{\lambda}$:
\be\label{eq23}
 \forall f>0 \quad \quad  \lambda \bbE_{\mu_{\lambda}} \big( f(n+1)\big) = \bbE_{\mu_{\lambda}} \big( f(n) n \big)  
\ee
Using \cite[Prop. 3.8]{CDPR},  one finds an analogous characterization for $\rho_{\Phi}$ as the only solution of
\be\label{eq22}
\forall f>0 \quad \Phi \, \bbE_{\rho_{\Phi}} \left( f(n + 1 ) \right) = \bbE_{\rho_{\Phi}} \left( f(n)  n \, \prod_{i=0}^{k-1} (k n - i) \right)
\ee
The density on the right hand side of \eqref{eq22}  is a polynomial of degree $k+1$. By choosing $f(n)=\mathbf{1}_{n=z}$ in both \eqref{eq23} and \eqref{eq22}, we obtain:
\be\label{e120} \forall z \in \N, \quad   \frac{\mu_{\lambda}(z-1)}{\mu_{\lambda}(z) } = \frac{1}{\lambda} z , \quad   \frac{\rho_{\Phi}(z-1)}{\rho_{\Phi}(z)} = \frac{z\prod_{i=0}^{k-1} (k z - i) }{\Phi }\sim \frac{z^{k+1}}{\Phi}  \ee
from which we deduce that $\rho_{\Phi}$ has much lighter tails than $\mu_{\lambda}$. The corresponding concentration inequalities should reflect this fact.
We derived the following result:
\begin{theorem}\label{t70}
Let $ \rho_{\Phi} $ be defined by \eqref{eq21}. Consider a $1$-Lipschitz function $f$. Then,  for all $R>0$:
\be\label{eq24}
\rho_{\Phi}\big(f \geq \bbE_{\rho_{\Phi}}(f)+R \big) \leq  \exp\big(-(k+1)R \log R +[ \log(\Phi) + c ] R + o(R) \big)
\ee
The constant $c$ is a structural constant which depends only on $k$. 
\end{theorem}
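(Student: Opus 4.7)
The plan is to bound the Laplace transform $\phi(t):=\bbE_{\rho_\Phi}(e^{tf})$ of a $1$-Lipschitz function $f$ by a Herbst-style argument built on a modified log-Sobolev inequality (MLSI) for $\rho_\Phi$, then to combine with a Chernoff bound and optimize in $t$ to extract the stated tail exponent.

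First I would derive an MLSI adapted to $\rho_\Phi$. By the integration-by-parts identity \eqref{eq22}, $\rho_\Phi$ is the reversible invariant measure of the birth--death chain on $\N$ with constant birth rate $\Phi$ and polynomial death rate $b(n)=n\prod_{i=0}^{k-1}(kn-i)\sim k^k n^{k+1}$; equivalently, a direct computation gives the closed form $\rho_\Phi(a)\propto \Phi^{a}/[a!\,(ka)!]$. This formula exhibits $\rho_\Phi$ as the image of the product Poisson measure $\mu_{j_k}\otimes\mu_{j_{-1}}$ under conditioning on the affine constraint $\{kN^k=N^{-1}\}$. Starting from the Bobkov--Ledoux MLSI for the product Poisson, an interpolation along the Markov dynamics that preserves the constraint then produces an MLSI for $\rho_\Phi$ of the form
\bes
\mathbf{Ent}_{\rho_\Phi}(F)\;\leq\;\bbE_{\rho_\Phi}\bigl[\,\Psi(D_+f)\,F\,\bigr],\qquad F=e^f,
\ees
where $D_+f(n)=f(n+1)-f(n)$ and $\Psi$ is a non-linear functional whose scaling reflects the $(k+1)$-th root asymptotics of $\rho_\Phi$.

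Next, Herbst's argument converts this MLSI into a bound on $\phi(t)$. Applying it to $F=e^{tf}$ for $f$ $1$-Lipschitz (so $|D_+f|\leq 1$) yields the differential inequality
\bes
\frac{d}{dt}\!\Big(\frac{\log\phi(t)}{t}\Big)\;\leq\;\frac{\chi(t)}{t^{2}},
\ees
with $\chi(t)\sim t\,\exp(t/(k+1))$ as $t\to\infty$. Integrating over $t\in[0,T]$---this is the ``interpolation'' in the Herbst parameter---and using $\lim_{t\to 0}\log\phi(t)/t=\bbE_{\rho_\Phi}(f)$, one gets $\log\phi(T)\leq T\,\bbE_{\rho_\Phi}(f)+C\exp(T/(k+1))(1+o(1))$. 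Markov's inequality then gives
\bes
\rho_\Phi\bigl(f\geq \bbE_{\rho_\Phi}(f)+R\bigr)\;\leq\;\exp\!\bigl(-TR+C\exp(T/(k+1))\bigr),
\ees
and optimizing in $T$ by setting $T=(k+1)\log R+O(1)$ produces the leading exponent $-(k+1)R\log R+[\log\Phi+c]R+o(R)$, with $c$ depending only on $k$ through the prefactor $C$.

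The technically delicate step is the first one: obtaining an MLSI sharp enough to yield the factor $k+1$ in front of $R\log R$. A naive birth--death Bobkov--Ledoux-type MLSI, based only on the constant birth rate $\Phi$, would produce, after Herbst, the standard Poissonian rate $R\log R$---one order of magnitude too coarse. Capturing the extra factor $k+1$ requires exploiting the polynomial growth of the death rate $b(n)$, or equivalently the $(ka)!$ factor in the density, and the natural mechanism for doing so is to transfer the MLSI from the ambient product Poisson measure to the conditional fiber via a semigroup or coupling interpolation that respects the affine constraint $kN^k=N^{-1}$. The sharpness of the resulting MLSI can finally be cross-checked against the exact asymptotic $\log\bbE_{\rho_\Phi}(e^{tN^k})\sim C_2\Phi^{1/(k+1)}e^{t/(k+1)}$ obtained by Laplace's method from the explicit form of $\rho_\Phi$.
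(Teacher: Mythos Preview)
Your global plan (MLSI $\Rightarrow$ Herbst $\Rightarrow$ Chernoff) matches the paper, and you correctly isolate the crux: a standard MLSI for $\rho_\Phi$, applied to a $1$-Lipschitz $f$, can never produce a tail exponent better than $-R\log R$. But your proposed fix does not close this gap. You assert that some non-standard MLSI, obtained by ``transferring from the product Poisson along a constraint-preserving dynamics'', will make the Herbst output grow like $e^{t/(k+1)}$ instead of $e^t$; this is never made concrete, and in fact any inequality of the form $\mathbf{Ent}_\pi(F)\le\lambda\,\bbE_\pi(\nabla F\,\nabla\log F)$ --- which is what Bobkov--Ledoux, Caputo--Posta, and all the birth--death machinery produce --- yields via Herbst $t\psi'(t)-\psi(t)\le\lambda t(e^t-1)$ for $1$-Lipschitz $f$, hence $\psi(t)\lesssim e^t$ and a tail $\sim e^{-R\log R}$, no matter how small $\lambda$ is or how fast the death rate $b(n)$ grows. (For $\rho_\Phi$ the Caputo--Posta criterion as stated in the paper does not even apply directly, since $c(n+1)-c(n)\sim n^k$ is unbounded.) The paper states this limitation explicitly. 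Your ``interpolation in the Herbst parameter'' is just the usual integration of the Herbst ODE and cannot change the leading order; the Laplace asymptotic you quote at the end is correct for $f(n)=n$ but is a consistency check, not a proof for general Lipschitz $f$.

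The paper's device is different and is the missing idea. It does not look for a stronger inequality on $\rho_\Phi$; it changes the underlying lattice. One builds an auxiliary measure $\pi_\Phi$ on $\N$ by inserting $k$ geometrically interpolated atoms between consecutive atoms of $\rho_\Phi$: $\pi_\Phi(m)\propto\rho_\Phi(n)^{1-\alpha}\rho_\Phi(n+1)^\alpha$ with $n=\lfloor m/(k+1)\rfloor$ and $\alpha=m/(k+1)-n$. On this refined support the successive ratios $\pi_\Phi(m-1)/\pi_\Phi(m)$ behave like $h(n)^{1/(k+1)}$, whose increments are now \emph{bounded}, so Caputo--Posta applies and gives a \emph{standard} MLSI for $\pi_\Phi$ with constant of order $\Phi^{1/(k+1)}$. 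The payoff is that a $1$-Lipschitz $f$ on the support of $\rho_\Phi$ lifts, via $g(m)=(1-\alpha)f(n)+\alpha f(n+1)$, to a $\tfrac1{k+1}$-Lipschitz function on the refined lattice; the ordinary Herbst bound applied to $(k+1)g$ then produces the exponent $-(k+1)R\log R$ automatically. An elementary comparison (on the sublattice $(k+1)\N$ the measures $\pi_\Phi$ and $\rho_\Phi$ coincide up to the normalizing constant $Z_\Phi\le k+1$) transfers the estimate back to $\rho_\Phi$. So the factor $k+1$ arises from a change of scale that turns Lipschitz-$1$ into Lipschitz-$\tfrac1{k+1}$, not from a sharpened functional inequality.
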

In \eqref{eq24}, and in the rest of the paper, by $o(R)$ we mean a function $g$ such that $\lim_{R \rightarrow + \infty } g(R)/R = 0$. The $o(R)$ term in \eqref{eq24} can be made explicit: it depends on $\Phi$ and $k$, but not on $f$. By following careful the proof of this theorem, it is possible to see that the bound \eqref{eq24} is interesting (i.e. the right hand side is $<1$) when $R \geq \Phi + \frac{1}{k+1} \Phi^{1/(k+1)}$. The bound is very accurate for $R$ large, see Remark \ref{Herbstrem}.

\begin{remark}\begin{enumerate}[(i)]
\item The size of the large jump drives the leading order in the concentration rate, while the reciprocal characteristic is responsible for the exponential correction term.
\item The larger $k$, the more concentrated is the random variable. This is because to compensate a large jump a bridge has to make many small jumps, and this reduces the probability of large jumps.
\item The smaller $\Phi$, the better the concentration. This fits with the short time interpretation of $\Phi$  given in \cite[Th.2.7]{CL15}
\end{enumerate}
\end{remark}
\begin{remark}[Sharpness]
It can be seen, using Stirling's approximation and \eqref{e120}  that the leading order term  $ - (k+1)R \log R$  is optimal and  the linear dependence on $\log(\Phi)$ at the exponential correction term is correct.
\end{remark}
The proof of this theorem is based on the construction of a measure $\pi_{\Phi}$ which interpolates $\rho_{\Phi}$ and for which the modified log Sobolev (MLSI) inequality gives sharp concentration results. Several MLSI have been proposed  for the Poisson distribution. We use the one which is considered in \cite{DP02} and \cite[Cor 2.2]{wu2000new}. The reason for this choice is that there are robust criteria (see \cite{CapPos07})  under which such inequality holds. The interpolation argument is crucial to achieve the rate $-(k+1)R \log R$. Indeed, the MLSI cannot yield any better than $-R \log R$ .
While doing the proof, we repeat the classical Herbst's argument for the MLSI, improving on some results of \cite{BOB98} (which were obtained by using a different MLSI).
\subsection{Bridges of CSRW on the square lattice: refined large deviations for the marginals.}
 Let $v_1= (1,0),v_2=(0,1)$. The square lattice is defined by $\cX = \Z^2$ and by saying that the neighbors of $x$ are $x \pm v_1$ and $x \pm v_2$.
We associate to any vertex $x \in \Z^2$ the clockwise oriented face $\bff_{x}$ and two closed walks of length two, $\bfe_{x,1},\bfe_{x,2}$ as follows:
\beas
\bff_x &=& (x \to x+v_2 \to x + v_1 + v_2 \to  x+v_1 \to x) \\
\bfe_{x,1} &=& (x \to x+v_1 \to x), \quad \bfe_{x,2} = ( x \to x + v_2 \to  x )
\eeas 
The set of closed walks of length two is denoted $\cE$:
\be\label{e91}
\cE = \{ (x\to y \to x) : (x \to y ) \in \cA \} = \left\{  \bfe_{x,i}, x \in \cX, i \in \{ 1,2\} \right\}
\ee
The set of clockwise oriented faces is $\cF$:
\bes
\cF:= \left\{ \bff_{x} : \, x \in \Z^2 \right\}.
\ees

In this subsection we prove an analogous statement to Theorem \ref{accordeon} for CSRWs on the square lattice.
A serious difficulty here is represented by the fact that there is not such a well developed theory to prove concentration of measure inequalities with Poissonian rates. In particular, all the tools we use in the proof of Theorem \ref{accordeon} do not have a "Poissonian" counterpart. To the best of our knowledge, the only result concerning Poisson-type deviation bounds for the marginals of a continuous time Markov chain is due to Joulin \cite{Joulin2007Poisson}. In Theorem 3.1 the author provides  abstract curvature conditions under which such bounds hold. However, explicit construction of Markov generators fulfilling these conditions is limited to 1-dimensional birth and death process, see Section 4.
Therefore, instead of using $\gamma(t)$ (see \eqref{e90}) we shall use a simpler way to measure the fluctuations of the bridge, adopting a Large Deviations viewpoint.
We will look at asymptotic tail expansions, and relate the coefficients in the expansion with reciprocal characteristics. This is a much rougher measurement, but still gives interesting results. 
 We consider the $00$ bridge $\bbP^{00}$ of the simple random walk which jumps along any arc with intensity constantly equal to $\lambda$. Using some classical expansions (see Lemma \ref{lastlemma}) one finds that:
\be\label{e121}
\log\Big( \bbP^{00} \big(d(X_t, \bbE_{\bbP^{00}}(X_t)) \geq R  \big)\Big) =  -2 R\log(R) + \big[ \log (4\lambda^2 t(1-t) \,) +2 \big]R + o(R)
\ee
 Theorem \ref{squarelatticebound} provides a condition on the reciprocal characteristics for the \eqref{e121} to hold when replacing $=$ with $\leq$. The conditions are expressed as conditions on the closed walks characteristics associated to the walks in $\cE \cup \cF$.  
\begin{theorem}\label{squarelatticebound}
Let $j:\cA \rightarrow \R_{+}$ be the intensity of a CSRW $\bbP$ on the square lattice. Assume that for some $\lambda>0$:
\be\label{eq8}
\forall x \in \Z^2, i \in \{1,2 \} \quad   \Phi_j(\bfe_{x,i})  \leq \lambda^2
\ee
and
\be\label{eq7}
\forall x \in \Z^2 \quad    \Phi_j(\bfe_{x,2})  \Phi_j(\bfe_{x,1}) \leq   \Phi_j(\bff_x) \leq  \Phi_j(\bfe_{x+v_1,2} )   \Phi_j(\bfe_{x+v_2,1}) 
\ee
then for any $x \in \Z^2$:
\bes
\log \bbP^{xx} \left ( d(X_t,\bbE_{\bbP^{xx}}(X_t)) \geq R \right) \leq -2\,R\log(R) + \big[ \log (4\lambda^2 t(1-t) \,) +2 \big] R + o(R)
\ees
\end{theorem}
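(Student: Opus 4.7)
The plan is to compare $\bbP^{xx}$ with the bridge $(\bbP^{*})^{xx}$ of the homogeneous CSRW $\bbP^{*}$ having rate $\lambda$ on every arc. The target asymptotic is exactly the one satisfied by $(\bbP^{*})^{xx}$: because $\bbP^{*}$ factorizes into two independent 1D simple random walks at rate $\lambda$, the bridge decouples into two independent conditioned Poisson marginals, and the stated expansion follows from Stirling's formula; this computation is carried out in Lemma \ref{lastlemma}.

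Both $\bbP$ and $\bbP^{*}$ are CSRWs on the square lattice with constant total rates $\bar{j}$ and $4\lambda$ respectively, so the survival factor over $[0,1]$ is a deterministic constant for each process, and the pathwise Radon--Nikodym derivative between the two bridges reads
\bes
\frac{d\bbP^{xx}}{d(\bbP^{*})^{xx}}(\bw)\;=\;\frac{1}{Z}\,\frac{\Phi_{j}(\bw)}{\lambda^{\ell(\bw)}}
\ees
for every closed path $\bw$ from $x$ to $x$ with $\ell(\bw)$ jumps, where $Z>0$ depends only on $x$, $j$, $\bar{j}$ and $\lambda$. The theorem thus reduces to the pathwise comparison
\bes
\Phi_{j}(\bw)\;\leq\;\lambda^{\ell(\bw)}\qquad\text{for every closed walk $\bw$ on $\Z^{2}$,}\qquad(\star)
\ees
since $(\star)$ yields $\bbP^{xx}(A)\leq Z^{-1}(\bbP^{*})^{xx}(A)$ for every event $A$; after taking logarithms, the constant $Z^{-1}$ and the bounded discrepancy between $\bbE_{\bbP^{xx}}(X_{t})$ and $\bbE_{(\bbP^{*})^{xx}}(X_{t})$ are both absorbed into the $o(R)$ remainder in Lemma \ref{lastlemma}.

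The remaining combinatorial task is to prove $(\star)$. An induction on the number of self-intersections of $\bw$, using that $\Phi_{j}$ and $\ell$ are both multiplicative under concatenation, reduces the problem to the case of a simple closed walk, which by Jordan's theorem encloses a simply connected axis-aligned region $R\subset\R^{2}$ tiled by a family of faces $\{\bff_{y}:y\in R\cap\Z^{2}\}$. Since every interior edge of the tiling is traversed in both directions by the face cycles enclosing it, one has the telescoping identity
\bes
\Phi_{j}(\bw)\;=\;\frac{\prod_{y\in R\cap\Z^{2}}\Phi_{j}(\bff_{y})}{\prod_{e\in E_{\mathrm{int}}(R)}\Phi_{j}(\bfe_{e})}.
\ees
Applying the right-hand inequality of \eqref{eq7} face by face produces an upper bound which is a product over edges of the form $\bfe_{y+v_{1},2}$ and $\bfe_{y+v_{2},1}$. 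A planar counting argument then shows that (i) each interior edge of $R$ appears in this product exactly once, because it is the "top" or "right" of precisely one of the two faces incident to it, and (ii) the boundary edges of $R$ appearing in the product are exactly the "top $+$ right" boundary edges, and these number $\ell(\bw)/2$ because a simply connected axis-aligned planar region has as many "top $+$ right" as "bottom $+$ left" boundary edges. The interior contribution therefore cancels the denominator in the telescoping identity, while the boundary contribution is bounded by $\lambda^{\ell(\bw)}$ via \eqref{eq8} (each factor $\leq\lambda^{2}$). Walks enclosing $R$ in the opposite orientation are treated symmetrically using the left-hand inequality of \eqref{eq7}.

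The main obstacle is precisely this combinatorial counting: confirming that the edges produced by iterating \eqref{eq7} over the tiling of $R$ split cleanly into the interior edges of $R$ (each exactly once, cancelling the denominator) and exactly $\ell(\bw)/2$ boundary edges. What makes this work is the very specific, \emph{orientation-consistent} choice of outgoing edges $\bfe_{y+v_{1},2}$ and $\bfe_{y+v_{2},1}$ in \eqref{eq7}, which is the same for every face; this is the "elementary but non-trivial combinatorial construction" alluded to in the introduction, and the reason why the hypothesis is phrased precisely in the way it is.
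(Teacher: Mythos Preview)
Your overall strategy coincides with the paper's: compute the bridge-to-bridge Radon--Nikodym density via Girsanov (using that both walks have constant speed so the survival factor is deterministic), reduce everything to the pathwise inequality $\Phi_j(\cc)\le\lambda^{\ell(\cc)}$ for all closed walks $\cc$ (the paper isolates this as Lemma~\ref{squarepatch}), and then invoke the explicit expansion of Lemma~\ref{lastlemma} for the reference bridge $\bbS^{xx}_\lambda$.

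The one genuine difference is in how you prove $(\star)$. The paper argues by induction on the \emph{area} enclosed by a simple closed walk: it locates the lexicographically smallest vertex $x_1$ on $\cc$, which is necessarily a convex corner, and replaces the two arcs through $x_1$ by the opposite two arcs of the face $\bff_{x_1}$, obtaining a walk $\tilde\cc$ of the same length and area one smaller; condition~\eqref{eq7} (the upper bound when $\cc$ is clockwise, the lower bound when it is counterclockwise) guarantees $\Phi_j(\cc)\le\Phi_j(\tilde\cc)$, and an extra subcase handles the possibility that the new vertex $z=x_1+v_1+v_2$ already lay on $\cc$. Your argument is instead a one-shot global identity: write $\prod_{\bff_y\subset R}\Phi_j(\bff_y)=\Phi_j(\cc)\prod_{e\in E_{\mathrm{int}}}\Phi_j(\bfe_e)$, apply the right-hand bound of~\eqref{eq7} to every face simultaneously, and use the elementary count $4|R|=2|E_{\mathrm{int}}|+\ell(\cc)$ to see that exactly $\ell(\cc)/2$ boundary edge-characteristics survive, each bounded by $\lambda^2$ via~\eqref{eq8}. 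Both proofs rest on the same orientation-consistent choice of ``top and right'' edges in~\eqref{eq7}; yours is shorter and bypasses the case analysis, while the paper's corner-cutting makes the role of the two-sided inequality and of the clockwise/counterclockwise dichotomy more transparent. Your sentence on the counterclockwise case is correct but terse: the ``symmetric'' use of the left-hand inequality in~\eqref{eq7} amounts to the observation that $\Phi_j(\bff_y)\Phi_j(\bff_y^*)$ equals the product of the four edge-characteristics, so the lower bound on $\Phi_j(\bff_y)$ turns into the \emph{same} top/right upper bound for $\Phi_j(\bff_y^*)$, after which the counting is identical.
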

\tikzset{middlearrow/.style={
        decoration={markings,
            mark= at position 0.5 with {\arrow{#1}} ,
        },
        postaction={decorate},
        thick,
        red
    }
}
\tikzset{earlyarrow/.style={
        decoration={markings,
            mark= at position 0.3 with {\arrow{#1}} ,
        },
        postaction={decorate},
        thick,
        blue
    }
}
\tikzset{latearrow/.style={
        decoration={markings,
            mark= at position 0.7 with {\arrow{#1}} ,
        },
        postaction={decorate},
        thick,
        blue
    }
}
\tikzset{glatearrow/.style={
        decoration={markings,
            mark= at position 0.7 with {\arrow{#1}} ,
        },
        postaction={decorate},
        thick,
       yellow
    }
}
\tikzset{gearlyarrow/.style={
        decoration={markings,
            mark= at position 0.3 with {\arrow{#1}} ,
        },
        postaction={decorate},
        thick,
     yellow
    }
}
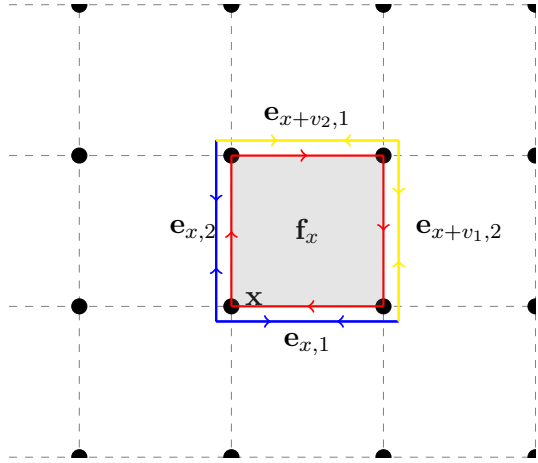
\begin{figure}[h!]
  \centering
  \begin{tikzpicture}
    \coordinate (Origin)   at (0,0);

    \clip (-1,-0) rectangle (6cm,6cm); 
    \pgftransformcm{1}{0}{0}{1}{\pgfpoint{0cm}{0cm}}
    \coordinate (Bone) at (0,2);
    \coordinate (Btwo) at (2,-2);
    \draw[style=help lines,dashed] (-14,-14) grid[step=2cm] (14,14);
    \foreach \x in {-7,-6,...,7}{
      \foreach \y in {-7,-6,...,7}{
        \node[draw,circle,inner sep=2pt,fill] at (2*\x,2*\y) {};
      }
    }
        \node[] at (2.3,2.1) {$\mathbf{x}$};
         \node[] at (3,3) {$\mathbf{f}_{x}$};
         \node[] at (3,1.5) {$\mathbf{e}_{x,1}$};
         \node[] at (1.5,3) {$\mathbf{e}_{x,2}$};
			 \node[] at (5.0,3) {$\mathbf{e}_{x+v_1,2}$};
         \node[] at (3,4.5) {$\mathbf{e}_{x+v_2,1}$};

    \filldraw[fill=gray, fill opacity=0.2] ($(2,2)$)
        rectangle ($(4,4)$);
        \draw[middlearrow={>}] (2,2) -- (2,4);
        \draw[middlearrow={>}] (2,4) -- (4,4);
        \draw[middlearrow={>}] (4,4) -- (4,2);
        \draw[middlearrow={>}] (4,2) -- (2,2);
         \draw[latearrow={<}] (1.8,1.8) -- (4.2,1.8);
          \draw[latearrow={<}] (1.8,1.8) -- (1.8,4.2);
         \draw[earlyarrow={>}] (1.8,1.8) -- (1.8,4.2);
          \draw[earlyarrow={>}] (1.8,1.8) -- (4.2,1.8);
          \draw[glatearrow={<}] (4.2,4.2) -- (4.2,1.8);
          \draw[glatearrow={<}] (4.2,4.2) -- (1.8,4.2);
         \draw[gearlyarrow={>}] (4.2,4.2) -- (1.8,4.2);
          \draw[gearlyarrow={>}] (4.2,4.2) -- (4.2,1.8);
  \end{tikzpicture}
  \caption{ A visual explanation of condition \eqref{eq7}: The characteristic associated with the face $\bff_x$ (red) has to be larger than the product of the characteristics associated with its left and lower side (blue) ,  and smaller than the product of the characteristics associated with its upper and right side (yellow)}
\end{figure}
\vspace{3mm}
\begin{remark}
The function $t \mapsto -\log(4 \lambda^2 t(1-t))$ plays the same role as $\gamma(t)$ in the diffusion case, and it features the same stylized fact we observed for $\gamma(t)$.
\end{remark}
\begin{remark}
\begin{enumerate}[(i)]
\item
One nice aspect of \eqref{eq8} and \eqref{eq7} is that they are local conditions, that is, for a given $\bff_x$ they depend only on the closed walks of length two that intersect $\bff$. 
\item The fact that $j$ fulfills the hypothesis of the Theorem does \textit{not} imply that $j (\zz)\leq \lambda$ on every arc of the lattice. This means that there exist CSRW whose tails are heavier than the simple random walk, but the tails of their bridges are lighter than those of the bridge of the simple random walk.
\item These conditions are easy to check and there are many jump intensities satisfying them: indeed we show in Lemma \ref{faceexistence} that for any $\varphi: \cE \cup \cF \rightarrow \R_{+}$ there exist at least one intensity $j$ such that $\Phi_j(\cc) = \varphi(\cc)$ over $\cE \cup \cF$.  
\item In the proof of Theorem it is seen how condition \eqref{eq7} makes sure that among the simple closed walks with the same perimeter, the ones with smallest area are those which have the largest value of $\Phi_j(\cdot)$.
\end{enumerate}
\end{remark}

The idea of the proof of Theorem \ref{squarelatticebound} is that the local conditions we impose on the faces ensure that for any closed walk $\Phi_j(\cc)$ can be controlled in terms of $\lambda^{\ell(\cc)}$.  We then use a modification of Girsanov's theorem for bridges, which gives us a form of the density in terms of the reciprocal closed walk characteristics, and conclude that such density has a global upper bound on path space. It is likely that one can relax \eqref{eq8} \eqref{eq7}, by imposing them only in the limit when $\| x \| \uparrow +\infty$.To simplify the presentation and the proofs, we did not consider this case.

\subsection{General graphs}

Here, we consider a graph $(\cX,\to)$ satisfying Assumption \ref{as-03} below and a continuous time random walk $\bbP^x$ on $(\cX,\to)$ with intensity $j$. 
 Our aim is to prove a result similar to Theorem \ref{squarelatticebound}. 
As the notion of faces does not exist for general graphs, we work with its natural substitute: the \textit{basis of closed walks}. This notion is a slight generalization of the notion of cycle basis for an undirected graph, for which we refer to \cite[Sec. 2.6]{bondy2008graph}. 
 \subsubsection*{Trees and basis of the closed walks}
 Prior to the definition, let us recall some terminology about graphs.
A subgraph of $(\cX,\to)$ is a graph on $\cX$ whose arc set in included in the arc set $\cA$ of $(\cX,\to)$. We say that two subgraphs intersect if their arc sets do so, and we say that one is included in the other if their arc sets are so. Let us recall that for a given vertex $z\in \cX$, its outer degree is $\mathbf{deg}(z):=| \{ z'': (z \to z'') \in \cA\}| $ is the outer degree at $z$. As in the previous subsection, the set of closed walks of length two is denoted $\cE$. Figure \ref{grafogen} helps in understanding the next definition. 
\begin{mydef}[Tree and basis of closed walks]\label{defs-01}
Let $(\cX, \to)$ be a graph fulfilling Assumption \ref{as-03}.
 \begin{enumerate}[(a)]
\item We call \emph{tree} a symmetric connected subgraph $\mathcal{T}$ of $(\cX,\to)$ which spans\footnote{i.e. it connects all vertices of $(\cX,\to)$} $\cX$ and does not have closed walks of length at least three.\footnote{closed walk of length two are allowed, as the graph is symmetric}. 
\item For a tree $\cT$, $\ces$ is the the set of closed walks of length two which does not intersect $\cT$.

\be\label{e61}
\ces = \{ \bfe \in \cE : \bfe \cap \cT = \emptyset \}
\ee

\item For any $(x \to y) \in \cA \setminus \cT$ we denote $\cc_{x \to y}$ 
the closed walk obtained by concatenating $(x \to y)$ with the only simple directed walk from $y$ to $x$ in $\mathcal{T}$. 
\item Let $\mathcal{T}$ be a tree. A $\mathcal{T}$-\emph{basis of  the closed walks} of $(\cX, \to)$ is any subset $\mathcal{C}$ of closed walks of the form: 
\bes
\mathcal{C} = \mathcal{C}^*\cup \mathcal{E}
\ees
 where $\mathcal{C}^*$ is obtained by choosing for any $\bfe=(x \to y \to x) \in \ces $ exactly one among $\cc_{x \to y}$ and $\cc_{y \to x}$. We denote the chosen element by $\cc_{\bfe}$.
\end{enumerate}
\end{mydef}

\GraphInit[vstyle = Shade]
\tikzset{
  LabelStyle/.style = { rectangle, rounded corners, draw,
                        minimum width = 2em, fill = yellow!50,
                        text =black, font = \bfseries },
  VertexStyle/.append style = { inner sep=5pt,
                                font = \Large\bfseries, ball color = lightgray},
  EdgeStyle/.append style = {->, bend left=10, double=blue}}
\thispagestyle{empty}

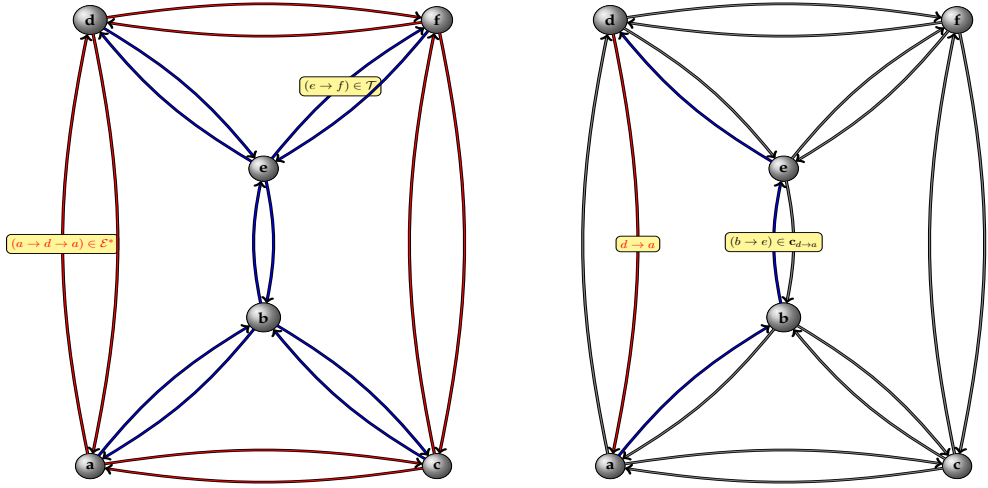
\begin{figure}[h]
\resizebox{13cm}{6.5cm}{\begin{tikzpicture}
\begin{scope}
  \SetGraphUnit{5}
  \Vertex{b}
  \SOWE(b){a}
  \SOEA(b){c}
  \NO(b){e}
  \NOWE(e){d}
  \NOEA(e){f}
    \Edge[](a)(b)
  \Edge[](b)(c)
  \Edge[](c)(b)
  \Edge[](b)(a)
  \Edge[](e)(d)
  \Edge[](d)(e)
  \Edge[label = $(e \to f) \in \mathcal{T}$ ](e)(f)
  \Edge[](f)(e)
  \Edge[](b)(e)
  \Edge[](e)(b)
  \tikzset{EdgeStyle/.append style = {double= red}}
  \tikzset{LabelStyle/.append style = {text = red}} 
  \Edge[](a)(c)
  \Edge[](c)(a)
  \Edge[](d)(f)
  \Edge[](f)(d)
   \Edge[label = $(a \to d \to a) \in \mathcal{E}^*$ ](a)(d)
  \Edge[](d)(a)
  \Edge[](c)(f)
  \Edge[](f)(c)
  \end{scope}
  
  \begin{scope}[xshift =15cm]
  \SetGraphUnit{5}
  \Vertex{b}
  \SOWE(b){a}
  \SOEA(b){c}
  \NO(b){e}
  \NOWE(e){d}
  \NOEA(e){f}
 \tikzset{EdgeStyle/.append style = {double= gray}}
 \Edge[ ](a)(d)
  \Edge[](b)(c)
  \Edge[](c)(b)
  \Edge[](b)(a)
  \Edge[](d)(e)
  \Edge[](e)(f)
  \Edge[](f)(e)
  \Edge[](e)(b)
\Edge[](a)(c)
  \Edge[](c)(a)
  \Edge[](d)(f)
  \Edge[](f)(d)
  \Edge[](c)(f)
  \Edge[](f)(c)  
  \tikzset{EdgeStyle/.append style = {double= red}}
  \tikzset{LabelStyle/.append style = {text = red}} 
  \Edge[label = $ d \to a $](d)(a) 
\tikzset{EdgeStyle/.append style = {double= blue}}
  \tikzset{LabelStyle/.append style = {text = black}} 
    \Edge[](a)(b) 
      \Edge[label = $ (b \to e) \in \mathbf{c}_{d \to a}$](b)(e)
        \Edge[](e)(d)
  \end{scope}
  \end{tikzpicture}}
\caption{Left: The blue arcs form a tree $\cT$: each pair of red arcs forms an element of $\cE^*$. Right: The closed walk $\mathbf{c}_{d \to a}$ is obtained by concatenating $(d \to a )$ with the unique simple walk in $\cT$ from $a$ to $d$ (blue).}
\label{grafogen}  
  \end{figure}

 Theorem \ref{treebound} gives a condition to control  the tails of $ d(X_t,x)$ under $\bbP^{xx}$.
\begin{theorem}\label{treebound}
Let $(\cX,\to)$ be a directed graph satisfying Assumption \eqref{as-03}, $1/\delta$ be its maximum outer degree. Let $j:\cA \rightarrow \R_+$ be the intensity of a CSRW $\bbP$ satisfying Assumption \ref{as-01}. If, for some tree $\mathcal{T}$ and a $\mathcal{T}$-based basis for the closed walks $\cC$:
\be\label{eq1}
\forall \bfe \in \cE,\quad \Phi_j(\bfe) \leq  (\lambda\delta)^{2}
\ee
 \be\label{eq2}
\forall \bfe \in \cE^*, \quad {(\lambda \delta)}^{1 - \ell(\bfe) }  \Phi_j({\bfe}) \leq  \Phi_j({\mathbf{c}_{\bfe}}) \leq {(\lambda \delta)}^{ \ell(\bfe) -1 } \prod_{ \stackrel{\bfe' \in \cE, \bfe' \neq \bfe } {\bfe' \cap \cc_{\bfe} \neq \emptyset}}   \Phi_j({\bfe'}) 
\ee
Then for any $x \in \cX$ and any $t \in \Int$, $R>0$:
\be\label{e29}
\log \bbP^{xx} \left ( d(X_t,x) \geq R \right) \leq -2R\log R +  [2 + 2 \log (\lambda t(1-t) ) + 3 \log(\delta  - 1) ]R + o(R) 
\ee
\end{theorem}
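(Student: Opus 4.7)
The plan is to follow the blueprint of Theorem \ref{squarelatticebound} in three stages: set up a tractable reference bridge, propagate the $\cC$-basis conditions to a uniform estimate on \emph{every} closed walk characteristic, and then use a bridge-adapted Girsanov formula to transfer the tail bound from the reference to $\bbP^{xx}$. Concretely, let $\bbS$ denote the constant-speed simple random walk on $(\cX,\to)$ with per-arc rate $\lambda\delta$ (so total outgoing rate at most $\lambda$, since the maximum outer degree is $1/\delta$), and let $\bbS^{xx}$ be its $xx$-bridge. A routine Lemma \ref{lastlemma}-style computation for $\bbS^{xx}$, together with the branching cost coming from the fact that at most $1/\delta$ arcs emanate from each vertex, yields exactly
\bes
\log \bbS^{xx}(d(X_t,x)\geq R) \leq -2R\log R + [2 + 2\log(\lambda t(1-t)) + 3\log(\delta-1)]R + o(R).
\ees
It is therefore enough to prove that $d\bbP^{xx}/d\bbS^{xx}$ is, pointwise on path space, bounded by the exponential of a linear functional of the jump count, with slope $o(\log R)$ relative to the main term.

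The combinatorial heart of the argument is the following uniform bound: under \eqref{eq1}--\eqref{eq2}, for every simple closed walk $\cc$ of length $n=\ell(\cc)$ one has $\Phi_j(\cc) \leq (\lambda\delta)^{n}$ up to a sub-exponential prefactor. I would prove this by decomposing $\cc$ along the basis $\cC=\cC^*\cup\cE$: walking around $\cc$ once, each arc $(x\to y)\notin\cT$ is ``closed'' into the associated basis walk $\cc_{\bfe_i}$ (with $\bfe_i=(x\to y\to x)$) by concatenating with the unique simple path from $y$ to $x$ in $\cT$; the tree arcs introduced this way are traversed in both directions across successive closures and cancel in the product $\Phi_j(\cdot)$ up to the edge-walks $\bfe\in\cE$ at their endpoints. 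This expresses $\Phi_j(\cc)$ as a ratio of products of $\Phi_j(\cc_{\bfe_i})$'s and $\Phi_j(\bfe')$'s. Plugging in the upper bound of \eqref{eq2} for the numerators ($\Phi_j(\cc_{\bfe_i}) \leq (\lambda\delta)^{\ell(\bfe_i)-1}\prod\Phi_j(\bfe')$), the lower bound of \eqref{eq2} to control denominators, and \eqref{eq1} for every single-edge factor ($\Phi_j(\bfe')\leq(\lambda\delta)^{2}$), a length accounting gives exactly the claimed growth rate $(\lambda\delta)^{n}$.

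Finally, the Roelly--Thieullen type Girsanov formula for bridges (the analogue of the one used in the proof of Theorem \ref{squarelatticebound}) expresses $\log(d\bbP^{xx}/d\bbS^{xx})$ along a trajectory as a sum over the closed walks appearing in the cycle decomposition of its jumps, of the form $\sum_i \log\bigl(\Phi_j(\cc_i)/(\lambda\delta)^{\ell(\cc_i)}\bigr)$. The combinatorial bound of the previous step forces this sum to be at most $N_{[0,1]}\cdot O(\log(1/\delta))$, where $N_{[0,1]}$ is the total jump count. On the event $\{d(X_t,x)\geq R\}$ one has $N_{[0,1]}\geq R$, so the density contributes at most a linear-in-$R$ correction which is absorbed into the $o(R\log R)$ main term; an elementary Cauchy--Schwarz (or direct Laplace-transform) argument then combines this with the reference tail estimate to produce \eqref{e29}.

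The main obstacle is the second paragraph: unlike the square lattice, where the faces provide a canonical pool of minimal closed walks and \eqref{eq7} compares characteristics of faces to those of adjacent edges, here one must work with an arbitrary spanning tree $\cT$ and the detour from $y$ to $x$ inside $\cT$ may be long. The trick is that \eqref{eq2} is designed precisely so that this detour length is compensated by the factor $(\lambda\delta)^{\ell(\bfe)-1}$, while the number $1/\delta$ of possible extensions at each vertex produces the $3\log(\delta-1)$ coefficient in the final estimate.
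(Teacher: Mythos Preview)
Your outline follows the paper's strategy, but several steps are either mis-specified or weaker than needed, and as stated they would not produce \eqref{e29}.

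\textbf{Reference walk.} A per-arc rate $\lambda\delta$ does \emph{not} give a CSRW on a non-regular graph: the total outgoing rate at $z$ is $\lambda\delta\,\mathbf{deg}(z)$, which varies with $z$. Then the speed term $\int(\jj(X_s)-\overline{j_0}(X_s))\,ds$ in Girsanov is path-dependent and does not factor out when conditioning on $X_0=X_1=x$. The paper's reference is $j_0(z\to z')=\lambda/\mathbf{deg}(z)$ (see \eqref{e93}); this is a CSRW of speed $\lambda$, so the speed term is constant and the bridge density reduces to $\Phi_j(\bw(X))/\Phi_{j_0}(\bw(X))$ for the \emph{single} closed walk $\bw(X)$ traced by the trajectory --- no cycle decomposition is needed.

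\textbf{Combinatorial bound and density.} The paper proves the sharp inequality $\Phi_j(\cc)\le(\lambda\delta)^{\ell(\cc)}$ for \emph{every} closed walk, with no prefactor (Lemma \ref{treepatch}). Since $\Phi_{j_0}(\cc)=\prod_i \lambda/\mathbf{deg}(x_i)\ge(\lambda\delta)^{\ell(\cc)}$, the bridge density is bounded by a single constant on path space, and one concludes immediately that $\bbP^{xx}(A)\le C\,\bbS^{xx}_\lambda(A)$ for all events $A$. Your ``sub-exponential prefactor'' and subsequent ``$N_{[0,1]}\cdot O(\log(1/\delta))$'' bound are mutually inconsistent, and in any case a non-uniform density forces you to control moments of $N_{[0,1]}$ under the bridge; the Cauchy--Schwarz you sketch would give $\bbP^{xx}(A)\le C\,\bbS^{xx}_\lambda(A)^{1/2}$, losing a factor $2$ in the leading term $-2R\log R$.

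\textbf{How the sharp bound is proved.} The paper argues by induction on $n(\cc):=|\{\bfe\in\cE^*:\bfe\cap\cc\neq\emptyset\}|$. If $n(\cc)=0$ then $\cc\subset\cT$ and decomposes into length-two walks, so \eqref{eq1} suffices. Otherwise pick $\bfe=(z\to z'\to z)\in\cE^*$ meeting $\cc$; replace the non-tree arc in $\cc$ by the tree path joining its endpoints to obtain $\tilde\cc$ with $n(\tilde\cc)=n(\cc)-1$. The ratio $\Phi(\cc)/\Phi(\tilde\cc)$ equals either $\Phi(\bfe)/\Phi(\cc_\bfe)$ or $\Phi(\cc_\bfe)\big/\prod_{\bfe'\neq\bfe,\ \bfe'\cap\cc_\bfe\neq\emptyset}\Phi(\bfe')$, depending on whether the arc of $\bfe$ appearing in $\cc$ is oriented opposite to or along $\cc_\bfe$; the lower, respectively upper, bound in \eqref{eq2} controls each case by $(\lambda\delta)^{1-\ell(\cc_\bfe)}$. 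This is cleaner than the global decomposition you describe and yields the exact exponent.
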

The proof of the Theorem is divided into two steps. In a first step one shows that for some constant $c$, $\bbP^{xx}(d(X_t,x) \geq R) \leq c \, {}\SRW^{xx}_{\lambda}(d(X_t,x) \geq R)$, where $\SRW^x_{\lambda}$ is the CSRW defined by:
\be\label{e93}
j(\zz) = \frac{\lambda}{\mathbf{deg}(z)}, \quad \forall \zz \in \cA.
\ee
The second step consists in estimating $\SRW^{xx}_{\lambda}(d(X_t,x) \geq R)$ with the right hand side of \eqref{e29}. Clearly, due to the fact that  $(\cX,\to)$ has no specific structure, the estimate we obtain is less precise than the on of Theorem \ref{squarelatticebound}. However, it displays the same type of decay for the tails: a leading term of order $-R \log R$ and a correction term of order $R$.

\begin{remark} We show in Lemma \ref{constspeedconstrlemma} that to any $\varphi:  \cC \rightarrow \R_{+}$ we can associate a CSRW whose reciprocal characteristics coincide with $\varphi$ over $\cC$. This shows that the conditions \eqref{eq1} and \eqref{eq2} are fulfilled by large class of Markov jump intensities. It can be seen that there exist no tree of the square lattice such that a cycle basis associated with it coincides with the faces of the lattice. Therefore Theorem \ref{squarelatticebound} is not implied by Theorem \ref{treebound}. 
\end{remark}
\vspace{0.5cm}


\section{Proof of the main results}

\subsection*{Proof of Theorem \ref{accordeon} }
\subsubsection*{Preliminaries}
We define $p^x_t(z)$ as the density of the marginal $\bbP^x_t$,  and $p^{xy}_t(z) $ as the density of $\bbP^{xy}_t$. 
Clearly, if $U$ does not depend on time, we have the relation:
\bes
p^{xy}_t(z) = \frac{p^x_t(z)p^{z}_{1-t}(y) }{p^x_1(y)}
 \ees
 $^{\alpha}p^x_t(\cdot)$ and $^{\alpha}p^{xy}_t(\cdot)$ are defined accordingly.
As Ornstein Uhlenbeck processes are Gaussian processes, for any finite set $I = \{0=t_0, t_1,t_2,..,t_{l}\} \subseteq [0,1]$
there exist a positive definite quadratic form $\qform$ over $\R^{d \times (l+1)}$ such that $\forall A \subseteq \R^{d \times l }$ and $x \in \R^d$:
\bea\label{eq10}
  ^{\alpha}\bbP^{x}(X_{I} \in A) &=& \int_{A} \exp \big(- \qform( x,x^1,..,x^l ) \big) dx^1..dx^l \\
\nonumber &=& \int_{A} \left[ p^{x}_{t_1} (x^1) \prod_{j=2}^{l} p^{x^{j-1}}_{\Delta t_j}(x^j)  \right]  dx^1..dx^l
\eea
where we set $\Delta t_j := t_j - t_{j-1}$.
Using the transition density of the Ornstein Uhlenbeck process (see e.g. \cite[Section 5.6]{KarShreve}), we can write down the explicit expression of $\qform$:
\bes
\qform(x^0,x^1,..,x^l) =  \prod_{j=1}^{l} \sqrt{ \frac{\alpha}{\pi(1- e^{-2\alpha \Delta t_j} ) }  } \exp \left( -\frac{\alpha}{ (1- e^{-2\alpha \Delta t_j }) } (x_j -e^{-\alpha \Delta t_j} x_{j-1} )^2\right) 
\ees
where we set $t_0=0$. In particular, we will be interested in the case when $I$  is the set $\Pi_m$ defined as:
\be\label{eq44}
\Pi_m = \{0, 1/m, .. ,  (m-1)/m , 1 \}
\ee
For $t\in \Int$, we define
\be\label{j(t)}
 j(t) = \max\{ j: j/m < t  \}, \quad \Pi^{ < t}_m =\{0, 1/m,..,j(t)/m,t \}
\ee
We can now prove Theorem \ref{accordeon}.
\begin{proof}
In a first step we show that the density of $\BRG$ with respect to the Brownian Bridge $\WRG$ is given by 
\be\label{e55}
\frac{d \bbP^{xy}}{d \bbW^{xy}}= \frac{1}{Z}\exp \left(- \int_{0}^{1} \scrU (t,X_t) dt \right):= M
\ee
where $\scrU$ has been defined at \eqref{e16} and $Z$ is a normalization constant. To do this, we show that the measure  
\bes
\bbQ := M \, \bbW^{xy}
\ees
fulfills the hypothesis of the Duality formula by Roelly and Thieullen, see Theorem \ref{IBPF} in the Appendix. It can be easily verified that the regularity hypothesis are verified by $\bbQ$, because of the regularity of the transition density of the Brownian bridge and of the smoothness of $\mathscr{U}$. Moreover, $\bbQ((X_0,X_1 )=(x,y))=1$. Let us now compute the derivative $\mathcal{D}_h$ of $M$. We have:
\begin{eqnarray}\label{eq:frechetdensity}
\nonumber  \mathcal{D}_h M(X) &=& \lim_{\varepsilon \rightarrow 0} \frac{1}{Z \,\varepsilon} \left( M(X+ \varepsilon h) -M(X) \right) \\
\nonumber &=& \lim_{\varepsilon \rightarrow 0} \frac{1}{Z\, \varepsilon}
\left( \exp(-\int_0^1 \scrU(t,X_t+ \varepsilon h(t))dt \right)-\left( \exp(-\int_0^1 \scrU(t,X_t )dt \right)\\
&=& \nonumber \frac{1}{Z} \left[-\int_0^1 \nabla \scrU(t,X_t) \cdot h(t) dt \right] \exp \left( - \int_0^1 \scrU(t,X_t) dt 
\right) \\
 &=& \left[-\int_0^1 \nabla \scrU(t,X_t)\cdot h(t) dt \right] M
\end{eqnarray}
 Now let us consider any simple functionals $F$. By usingTheorem \ref{IBPF}\footnote{For the application we are going to make of the duality formula to be completely justified one shall extend its validity from the simple functionals to the differentiable functionals. A simple approximation argument, which we do not present here, takes care of that. } for the Brownian bridge $\bbW^{xy}$, Leibniz's rule and \eqref{eq:frechetdensity} we obtain:
 \beas
 \bbQ \Big( \mathcal{D}_h F  \Big)& = & \bbW^{xy} \Big( (\mathcal{D}_h F)  M\Big) \\
&=& \bbW^{xy} \Big( \mathcal{D}_h (F  M) \Big)- \bbW^{xy} \Big( F (\mathcal{D}_h M)   \Big) =\\
&=&\bbW^{xy} \Big((F  M) \int_{0}^1 \dot{h}(t) \cdot dX_t \Big)+ \bbW^{xy} \Big( (F M  ) \int_{0}^{1}\scrU(t,X_t) \cdot h(t) dt \Big) \\
&=&\bbQ \Big(F \left[ \int_{0}^1 \dot{h}(t) \cdot dX_t + \int_{0}^{1}\scrU(t,X_t) \cdot h(t) dt \right] \Big)
 \eeas
 from which \eqref{e55} follows, because of the arbitrary choice of $F$. As a by-product, we obtain that, if we choose $\alpha$ as in \eqref{e17}, we have:

\bes
 \frac{d \bbP^{xy} }{ d {}^{\alpha}\bbP^{xy}} = \exp\Big(  \int_{0}^{1}V(t,X_t) dt \Big)
\ees
 where 
 \bes
 V(t,X_t) =\frac{1}{2}\alpha^2  \| x \|^2-\scrU(t,X_t) - \log (\, Z \,)
 \ees
Note tat because of \eqref{e17}, $V(t,\cdot)$ is concave for all $t$.
The next step in the proof is to prove that $z \mapsto \frac{d\bbP^{xy}_t}{d ^{\alpha}\bbP^{xy}_{t}}(z)$ is log concave. To do this we will show that $(x,z,y)\mapsto \frac{d \bbP^{xy}_t}{d ^{\alpha}\bbP^{xy}_{t}}(z)$ is log concave, which is a slightly stronger statement. To this aim, we observe that, applying the Markov property for $^{\alpha}\bbP^{xy}$ we have:
\bea\label{e51}
\nonumber \frac{d \bbP^{xy}_t}{d ^{\alpha}\bbP^{xy}_{t}}(z) &=& \bbE_{^\alpha \bbP^{xy}} \left(  \exp \left(\int_{0}^{1} V(s,X_s)ds \right)  \big | X_t = z \right) \\
&=& \bbE_{^\alpha \bbP^{xy}} \left(  \exp \left(\int_{0}^{t} V(s,X_s)ds \right)  \big | X_t = z \right) \\
\nonumber &\times &\bbE_{^\alpha \bbP^{xy}} \left(  \exp \left(\int_{t}^{1} V(s,X_s) ds \right)  \big | X_t = z \right)
\eea
 
 We show that each factor is a log concave function of $(x,y,z)$. Let us consider the first factor. A further application of the Markov property for $\bbP^x$ gives:
 \bes
 \bbE_{^\alpha \bbP^{xy}} \left(  \exp \left(\int_{0}^{t} V(s,X_s)ds \right)  \big | X_t = z \right) = \bbE_{^\alpha \bbP^{x}} \left(  \exp \left(\int_{0}^{t} V(s,X_s)ds \right)  \big | X_t = z \right):= G(x,z)
 \ees
Consider a discretisation parameter $m \in \N$, and $\Pi_m$, $j(t)$, $\Pi^{<t}_m$ as in \eqref{eq44}, \eqref{j(t)} and define:
\beas
\mathcal{I}_m& &: \R^{d \times( j(t)+2)} \rightarrow \R \\ 
\mathcal{I}_m (x,x^1..,x^{j(t)+1}) &=& \frac{1}{m} V(0,x)+ \\
&+&\frac{1}{m} \sum_{1 \leq j \leq j(t)-1} V(j/m,x^j) \, + (t- j(t)/m) V(j(t)/m,x^{j(t)})  
\eeas
and 
\beas
G^m(x,z) &:=& ^{\alpha}\bbP^{x} \Big( \exp( \mathcal{I}_m(X_{\Pi^{<t}_m}) )  \big | X_t = z  \Big) \\
&=& ^{\alpha}\bbP^{x}_{\Pi^{<t}_m}\Big( \exp( \mathcal{I}_m(x,x^1,..,x^{j(t)},x^{j(t+1)} )  \big | x^{j(t)+1} = z  \Big) 
\eeas
Clearly, $G^m(x,z) \rightarrow G(x,z)$ pointwise. The conditional density of ${}^{\alpha}\bbP^{x}_{\Pi^{<t}_m}$ given $X_t =z$ is:
\be\label{e46}
\frac{1}{{^{\alpha}}p^x_t(z)} \times \Big[{^{\alpha}} p^{ x}_{1/m}(x_1) \left( \prod_{j=2}^{j(t)}{^{\alpha}}p^{ x^{j-1} }_{1/m}(x_j)  \right) {^{\alpha}}p^{x^j}_{t-j(t)/m}(z) \Big] 
\ee
Using \eqref{eq10} we rewrite both the numerator and the normalization factor at the denominator to obtain the following equivalent expression for the conditional density:
\bes
\frac{\exp \big(- \qmform(x, x^1,...,x^{j(t)},z)  \big)}{ \int_{\R^{d \times j(t)}} \exp \big(- \qmform(x, x^1,...,x^{j(t)},z \big) dx^1..dx^{j(t)}}
\ees
 which then gives
 \bes
G^m(x,z):=\frac{ \int_{\R^{d \times j(t)} } \exp \big( \mathcal{I}_m(x,x^1,..,x^{j(t)},z) - \qmform(x, x^1,...,x^{j(t)},z)  \big) dx^1..dx^{j(t)} }{ \int_{\R^{d \times j(t)}} \exp \big(- \qmform(x, x^1,...,x^{j(t)},z)  \big) dx^1..dx^{j(t)}}
 \ees
 By mean of the identifications
 \beas
w &\hookrightarrow & (x,x^1,..,x^{j(t)},z ) \in \R^{d \times j(t)+2}\\
v &\hookrightarrow & (x^1, .. , x^{j(t)}) \in \R^{d \times j(t)} \\
v'&\hookrightarrow & (x,z) \in \R^{d \times 2} \\
F(w)&\hookrightarrow & \exp(\mathcal{I}_m(w) )
\eeas
we can then rewrite $G^m(x,z)$ as the right hand side of \eqref{eq50}. By the hypothesis \eqref{e17} $V(t,\cdot)$ is concave for any $t \in \Int$. Hence $\mathcal{I}_m$ is concave as well. Therefore we can apply Theorem \ref{thm:logconcpres}  to conclude that $G^m(x,z)$ is log-concave for all $m$, and therefore so is the limit. This concludes the proof  that the first of the two appearing in \eqref{e51} is log concave. With the same argument we have just used, one shows that also the other factor is  log concave and therefore  $\frac{d {}^{\alpha}\bbP^{xy}_t}{ d \bbP^{xy}_t}$ is log concave. This tells us that: 
\be\label{eq53}
\inf_{z \in \RD, v \in \RD, \| v \|=1} - v \cdot \Hess(\log p^{xy}_t) (z)  \cdot v \geq \inf_{z \in \RD, v \in \RD, \| v\|=1} - v \cdot \Hess(\log {}^{\alpha}p^{xy}_t) (z)  \cdot v
\ee
The explicit expression for $^{\alpha}p^x_t(z)$  is well known, see e.g. \cite[Section 5.6]{KarShreve}:
\bes
^{\alpha}p^x_t(z) =  \sqrt{\frac{\alpha}{\pi (1 -\exp(-2\alpha t))}}\exp \left( -\frac{\alpha}{(1- e^{-2 \alpha t })}  \| z - x e^{-\alpha t} \|^2    \right)
\ees
Therefore, as a function of $z$:
\beas
^{\alpha}p^{xy}_t(z) &\propto & ^{\alpha}p^{x}_t(z) ^{\alpha}p^{z}_{1-t}(y)  \\
&\propto &   \exp \left(-\frac{\alpha}{(1- e^{-2 \alpha t })}  \| z - x e^{-\alpha t} \|^2  - \frac{\alpha}{(1- e^{-2 \alpha (1-t) })}  \| y - z e^{-\alpha (1-t)} \|^2 \right)
\eeas
It is then an easy computation to show that $\Hess(\log {}^{\alpha}p^{xy}_t ) (z)   = -{}\gamma_{\alpha}(t) \mathbf{id}$, where $\gamma_{\alpha}(t)$ had been defined at \eqref{e40}.
Using \eqref{eq53}, conclusion follows by Theorem 2.7 in \cite{Led01}.
\end{proof}

\begin{remark}\label{noway}
In \cite[Th. 6.1]{BrLieb76} log-concavity of solutions to 
\bes
\partial_t \phi(t,z) - \frac{1}{2} \Delta \phi (t,z) + V(z)\phi(t,z) = 0.
\ees
is established when $V$ is convex. Define now $\phi(t,z)$ as the second factor in \eqref{e51} and assume for simplicity that $\alpha=0$ and $V$ not to depend on time:
\bes
\phi(t,z):= \bbE_{\bbW^{xy}} \left(  \exp \left(\int_{t}^{1} V(X_s) ds \right)  \big | X_t = z \right)
\ees
Using Feynamn-Kac formula and the expression for the drift of the Brownian bridge we have that $\phi$ solves
\bes
\partial_t \phi(t,z) + \frac{1}{2} \Delta \phi (t,z) + \frac{(y-z)}{(1-t)} \nabla \phi(t,z) + V(t,z)\phi(t,z) = 0.
\ees
Log-concavity of $\phi$ when $V$ is concave is a by-product of the proof of Theorem \ref{accordeon}.
\end{remark}
\subsection*{Proof of Theorem \ref{t70} }

The main steps of the proof are the Lemmas \ref{p460} and \ref{p600}. In Lemma \ref{p460} we revisit Herbst's argument, while in Lemma \ref{p460} we construct an auxiliary measure $\pi_{\Phi}$ for which sharp concentration bounds can be obtained through MLSI.

\subsubsection*{A refined Herbst's argument} We apply the Herbst's argument to a Modified Log Sobolev Inequality, studied, among others, by Dai Pra, Paganoni, and Posta in \cite{DP02}.
In their Proposition 3.1 they show that the Poisson distribution $\mu_{\lambda}(\cdot)$ of mean $\lambda$ satisfies the following inequality:
\be\label{e314} 
\forall f >0, \quad  \bbE_{\mu_{\lambda}} \Big( f \log f \Big) - \bbE_{\mu_{\lambda}} (f) \log(\bbE_{\mu_{\lambda}} (f)) \leq \lambda \bbE_{\mu_{\lambda}} \Big(\nabla f \nabla \log f \Big)
 \ee
where $\nabla f(n)$ is the discrete gradient $f(n+1)-f(n)$.

\begin{lemma}\label{p460}
Let $\mu_{\lambda}$ satisfy \eqref{e314}. Then for any 1-Lipschitz function $f: \N \rightarrow \R$:
\be\label{e315}
\mu_{\lambda} \Big(f \geq  \bbE_{\mu_{\lambda}} \big(f \big) + R \Big) \leq \exp\left(-(R+2\lambda)\big[\log\big(1+\frac{R}{2\lambda}\big) +1\big] \right)
\ee
In particular,
\bes
\mu_{\lambda} \Big(f \geq  \bbE_{\mu_{\lambda}} \big(f \big) + R \Big) \leq \exp\left(-R \log R +[\log(2 \lambda) +1]\, R +o(R)\right).
\ees
\end{lemma}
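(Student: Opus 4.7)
The plan is to carry out a Herbst-type argument adapted to the Poisson MLSI \eqref{e314}: apply it to $e^{tf}$, extract a differential inequality for the log-Laplace transform $K(t) := \log \bbE_{\mu_{\lambda}}[e^{tf}]$, and then solve it by comparison with an explicit supersolution rather than by direct integration. After reducing to the centered case (replacing $f$ by $f - \bbE_{\mu_{\lambda}}[f]$), the concentration bound \eqref{e315} will follow from Chernoff's inequality and an optimization in $t$; the asymptotic statement is then a routine Taylor expansion.

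I would first substitute $e^{tf}$ into \eqref{e314}. The entropy side equals $tH'(t) - H(t)K(t)$ where $H := e^K$. Writing $\Delta(n) := f(n+1) - f(n) \in [-1,1]$ by the $1$-Lipschitz hypothesis, the gradient side equals $\lambda t\, \bbE_{\mu_{\lambda}}[e^{tf(n)}(e^{t\Delta(n)}-1)\Delta(n)]$. The elementary inequality $(e^{tu}-1)u \leq e^t - 1$ for $|u|\leq 1$ and $t \geq 0$ (check the two cases $u\geq 0$ and $u<0$ separately) then produces, after dividing by $H(t)$, the differential inequality
\begin{equation*}
tK'(t) - K(t) \leq \lambda t(e^t - 1), \qquad t > 0.
\end{equation*}

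The heart of the proof is turning this into a clean MGF bound. Direct integration via $(K/t)' \leq \lambda(e^t-1)/t$ produces the exponential integral $\mathrm{Ein}(t)$, which has no elementary closed form. Instead, I would guess the supersolution $\tilde K(t) := 2\lambda(e^t - 1 - t)$ and verify directly that $t\tilde K'(t) - \tilde K(t) \geq \lambda t(e^t - 1)$; this reduces to the elementary inequality $e^t(t-2) + t + 2 \geq 0$ for $t\geq 0$, easily checked from the Taylor series. Setting $D := K - \tilde K$, the comparison yields $(D(t)/t)' \leq 0$, and since both $K$ and $\tilde K$ are $O(t^2)$ near zero (by centering, $K(0) = K'(0) = 0$, and clearly $\tilde K(0) = \tilde K'(0) = 0$), one has $\lim_{t\to 0^+} D(t)/t = 0$. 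Monotonicity then forces $D \leq 0$, giving the MGF bound $K(t) \leq 2\lambda(e^t - 1 - t)$ for all $t \geq 0$.

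Finally, Markov's inequality gives $\mu_{\lambda}(f \geq R) \leq \exp(-tR + K(t)) \leq \exp(-tR + 2\lambda(e^t - 1 - t))$, and the right-hand side is minimized at the unique $t^* > 0$ with $e^{t^*} = 1 + R/(2\lambda)$, i.e.\ $t^* = \log(1 + R/(2\lambda))$. Substituting yields the Bennett-type bound \eqref{e315} after rearrangement; a direct Taylor expansion for $R \to \infty$ then recovers the asymptotic $\exp(-R\log R + [\log(2\lambda)+1]R + o(R))$. The main obstacle is the second step: naive integration of the ODE does not give a usable closed form, and identifying the right supersolution $\tilde K$ (together with the boundary analysis at $t = 0$) is the essential trick that also explains the appearance of $2\lambda$ rather than $\lambda$ in the final bound.
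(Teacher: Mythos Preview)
Your argument is correct and leads to exactly the same moment-generating-function bound as the paper, namely $K(t)\le 2\lambda(e^t-1-t)$ for the centered $f$, and hence the same Chernoff optimization. The only difference is the order of operations at the ODE step: the paper first solves $\tau h'-h=\lambda\tau(e^\tau-1)$ explicitly via the series $\gamma(\tau)=\sum_{k\ge1}\tau^k/(k\cdot k!)$, proves the comparison $\psi\le h$ by an $\varepsilon$-perturbation argument (their Lemma~\ref{p461}), and only then bounds $\gamma(\tau)\le 2\tau^{-1}(e^\tau-\tau-1)$; you bypass the series entirely by guessing the supersolution $\tilde K(t)=2\lambda(e^t-1-t)$ and checking $t\tilde K'-\tilde K\ge \lambda t(e^t-1)$ directly, then running the comparison via $(D/t)'\le 0$ with the $O(t^2)$ boundary analysis. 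Your route is a bit more economical (no exponential-integral series, no separate comparison lemma), while the paper's route is more systematic in that it does not require guessing the right $\tilde K$. Either way the two proofs coincide from the line $K(t)\le 2\lambda(e^t-1-t)$ onward.

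One small remark: your optimization actually yields $\exp\!\big(R-(R+2\lambda)\log(1+\tfrac{R}{2\lambda})\big)$, which is the form the paper records in \eqref{e26} of the subsequent Remark and is what its own computation gives; the display \eqref{e315} in the statement carries an extra ``$+1$'' that does not come out of the minimization. This does not affect the asymptotic $-R\log R+[\log(2\lambda)+1]R+o(R)$, which you recover correctly.
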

\vspace{0.5cm}
\begin{remark}\label{Herbstrem}
 We are able to improve the concentration rate obtained in \cite[Prop. 10]{BOB98} and \cite[Cor 2.2]{wu2000new} for the Poisson distribution. For instance, in \cite{BOB98} the following deviation bound for 1-Lipschitz functions is obtained under the Poisson distribution $\mu_{\lambda}$ of parameter $\lambda$:
\be\label{e25}
\mu_{\lambda} \left( f \geq  \bbE_{\mu_{\lambda}}(f) + R \right) \leq \exp \left( -\frac{R}{4} \log \left(1+\frac{R}{2\lambda}\right)\right)  
\ee
Note that the right hand side can be rewritten as $-\frac{R}{4} \log(R) + \frac{\log (2 \lambda)}{4}R + o(R)$.
We improve \eqref{e25}  to
\be\label{e26}
\mu_{\lambda} (f \geq  \bbE_{\mu_{\lambda}}(f) + R ) \leq \exp\left(-(R+2\lambda)\log\left(1+\frac{R}{2\lambda}\right) +R \right)
\ee
In this case, the rate has the form $ \exp(-R \log R + (\log(\lambda)+1+\log(2)) R + o(R)) $. This rate is sharp in the leading term $-R \log(R)$. Indeed, if one uses the explicit form of the Laplace transform of $\mu_{\lambda}$ one gets the following deviation bound for the identity function (see e.g. Example 7.3 in \cite{Ross11}):
\be\label{e27}
\mu_{\lambda} \left( n \geq \bbE_{\mu_{\lambda}}(n) + R \right) \leq \exp\left(-R\left(\log \left(1+\frac{R}{\lambda} \right)-1\right) - \lambda \log \left(1+\frac{R}{\lambda} \right)\right)
\ee
The rate here is of the form $-R \log R +(\log(\lambda)+1)R +o(R)$. This shows that \eqref{e315} is sharp concerning the leading term, has the right dependence on $\lambda$ in the exponential correction term. Concerning the constants appearing in the exponential terms, we have $1 + \log(2)$. We do not know whether this is sharp or not. However, nothing better than $1$ is reasonable to expect  because of \eqref{e27}.
\end{remark}

\begin{proof}
Let $f$ be 1-Lipschitz. It is then standard to show that $f$ has exponential moments of all order. Therefore, all the expectations we are going to consider in the next lines are finite. 
Let us define:
\bes \varphi_{\tau}:= \bbE_{\mu_{\lambda}} \big( \exp(\tau f) \big), \quad \psi_{\tau}:= \log \bbE_{\mu_{\lambda}}\left( \exp\left(\tau f\right) \right) \ees
We apply the inequality \eqref{e314} to $\exp( \tau f)$. Note that the left hand side reads as $ \tau \partial_{\tau} \varphi_{\tau} - \varphi_{\tau} \psi_{\tau}$. 
The right hand side can be written as
\bes 
\lambda \tau \bbE_{\mu_{\lambda}}( \exp(\tau f) [\exp(\tau \nabla f)-1] \, \nabla f )
\ees
Using that$f$ is 1-Lipschitz and the elementary fact that 
for all $\tau>0$ $ \sup_{y \in [-1,1]} | y [\exp(\tau y) -1] | =\exp(\tau)-1$ we can bound the above expression by:
$$ \lambda \tau [ \exp(\tau)-1 ] \, \bbE_{\mu_{\lambda}}\big( \exp(\tau f) \big)= \lambda \tau [\exp(\tau)-1]\varphi_{\tau}$$
 We thus get the following differential inequality:
\be\label{e325}
\tau \partial_{\tau} \varphi_{\tau} - \varphi_{\tau} \psi_{\tau} \leq \lambda \tau \varphi_{\tau} (\exp(\tau)-1)  
\ee
Dividing on both sides by $\varphi_{\tau}$, and using the chain rule, it can be rewritten as a differential inequality for $\psi$:
\be\label{e326}
{\tau} \partial_{\tau} \psi_{\tau} -  \psi_{\tau} \leq \lambda {\tau}(\exp({\tau})-1) , \quad \partial_{\tau} \psi_0 = \bbE_{\mu_{\lambda}}(f),\psi_0=0
\ee
The ODE corresponding to this inequality is 
\be\label{e321}
{\tau} \partial_{\tau} h_{\tau} -  h_{\tau} =  \lambda {\tau}(\exp({\tau})-1) , \quad \partial_{\tau} h_0 = \bbE_{\mu_{\lambda}}(f), h_0=0
\ee
Note that the condition $h_0=0$ is implied by the form of the equation, and it is not an additional constraint.
\eqref{e321} admits a unique solution, given by:
\be\label{e320}
h_{\tau} = {\tau} \bbE_{\mu_{\lambda}}\big(f\big)+\lambda \tau \gamma({\tau}) 
\ee
where 
\be\label{e322}
\gamma({\tau}) = \sum_{k=1}^{+\infty} \frac{1}{k} \frac{{\tau}^k}{k!}
\ee
The fact that \eqref{e320} is the solution to \eqref{e321} can be checked directly by differentiating term by term the series defining $\gamma$ in \eqref{e322}. We claim that \be\label{e323}\forall \tau \geq 0 \quad  \psi_{\tau}\leq h_{\tau}  \ee
The proof of this claim, is postponed to the Appendix section, see  Propositon \ref{p461}.
Given \eqref{e323}, a standard argument with Markov inequality yields:

\bes
\mu_{\lambda}\big(f \geq \bbE_{\mu_{\lambda}}(f)+R \big) \leq \exp\left( \inf_{\tau \geq 0}\psi_{\tau} -\tau \bbE_{\mu_{\lambda}}(f)- \tau R \right)\leq
\exp \left(\inf_{\tau >0} \lambda \tau \gamma(\tau) -\tau R  \right)
\ees
We can bound $\gamma$ in an elementary way:

\bes
\gamma(\tau)=\sum_{k=1}^{+\infty} \frac{1}{k} \frac{{\tau}^k}{k!} \leq \frac{2}{\tau} \sum_{k=1}^{+\infty}  \frac{{\tau}^{k+1}}{(k+1)!}  = 2\frac{\exp(\tau)-\tau-1}{\tau}
\ees
and therefore:
\bes
\mu_{\lambda}(f \geq \bbE_{\mu_{\lambda}}(f) + R ) \leq \exp \left(\inf_{\tau > 0}  2 \lambda \exp(\tau) - (2\lambda + R)\tau -2\lambda \right)
\ees
Solving the optimization problem yields the conclusion.
\end{proof}

\subsubsection*{An interpolation}The idea behind the proof of Theorem \ref{t70} is to construct a measure $\pi_{\Phi}$ (see Definition \ref{d107}) which ``interpolates'' $\rho_{\Phi}$ and for which the MLSI \eqref{e314} gives sharp concentration bounds. 
\begin{mydef}\label{d107}
Let $\rho_{\Phi}$ be defined by \eqref{eq21}. We define $\pi_{\Phi} \in \mathcal{P}(\N)$ as follows:
\be\label{e330} 
\pi_{\Phi} \big( m \big)= \frac{1}{Z_{\Phi}} \rho_{\Phi} \big( n(m) \big)^{1- \alpha(m) }\rho_{\Phi} \big( n(m) +1 \big)^{ \alpha(m) }
\ee
where 
\be\label{e100}
n(m) = \lfloor m/(k+1) \rfloor , \quad \alpha(m) = m/(k+1) -n(m)
\ee
\end{mydef}

Another ingredient we shall use in the proof is the following criterion for MLSI, due to Caputo and Posta. What we make here is a summary of some of their results in Section 2 of the paper \cite{CapPos07}, adapted to our scopes. To keep track of the constants, we also use Lemma 1.2 of \cite{Led01}.  We do not reprove these results here.

\begin{lemma}[Caputo and Posta criterion for MLSI,\cite{CapPos07}]\label{l10}
Let $\pi \in \mathcal{P}(\N)$ be such that
\be\label{e56}
c(m):=\frac{\pi(m-1)}{\pi(m)} 
\ee
has the property that for some $v \in \N$, $c>0$:
\be\label{e360}
\inf_{m \geq 1} \quad c(m+v)-c(m)  > 0
\ee
and that $\sup_{m \geq 0} c(m+v)-c(m)<+\infty$.
Then the function $\tilde{c}$ defined by 
\be\label{e57}
\tilde{c}(m):= c(m) + \frac{1}{v} \sum_{i=0}^{v-1} \frac{v-i}{v}[c(m+i)+c(m-i)-2c(m)]
\ee
is uniformly increasing, that is 
\be\label{e362} \inf_{m \geq 0}\tilde{c}(m+1)-\tilde{c}(m) \geq \delta \ee 
for some $\delta>0$.  
Moreover, if we define $\tilde{\pi} \in \mathcal{P}(\N)$ by:
\be\label{e361}
\tilde{\pi}(0)= \frac{1}{\tilde{Z}}, \quad \tilde{\pi}(m)= \frac{1}{\tilde{Z}}\prod_{i=1}^{m} \frac{1}{\tilde{c}(i)} 
\ee
then $\tilde{\pi}$ is equivalent to $\pi$ in the sense that there exist $\tilde{C}$ such that: 
\be\label{e363} \varepsilon \leq \frac{\pi(m)}{\tilde{\pi}(m)} \leq \varepsilon^{-1} \ee
Finally, $\pi$ satisfies the MLSI \eqref{e314} with $ \delta^{-1} \exp(4 \varepsilon^{-1} )$ instead of $\lambda$.
\end{lemma}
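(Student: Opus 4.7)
The plan is to decompose the proof into three logical steps mirroring the three assertions of the lemma: establish the uniform gap $\tilde{c}(m+1)-\tilde{c}(m) \geq \delta$ via a combinatorial identity for the increments of $\tilde{c}$; use a parallel telescoping argument to bound the ratio $\pi/\tilde{\pi}$; and transfer a known MLSI for the ``nicer'' measure $\tilde{\pi}$ back to $\pi$ by a Holley--Stroock-type perturbation principle.

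For the first step, the key observation is that the weights $(v-i)/v^{2}$ appearing in \eqref{e57} are tailored so that after Abel summation the increments of $\tilde{c}$ admit the clean representation
\[
\tilde{c}(m+1) - \tilde{c}(m) \;=\; \frac{1}{v^{2}} \sum_{i=1}^{v} \bigl[\, c(m+i) - c(m+i-v) \,\bigr].
\]
This identity can be checked by direct computation for small $v$ and extended by induction on $v$ using the discrete Leibniz rule for second differences; heuristically, it expresses the unit-step discrete derivative of $\tilde{c}$ as a uniform average of $v$ consecutive $v$-step discrete derivatives of $c$. Since each bracket on the right-hand side is bounded below by $\eta := \inf_{m\geq 1}[c(m+v)-c(m)]>0$, we conclude that $\delta := \eta/v$ works.

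For the second step, the telescoping $\log[\pi(m)/\tilde{\pi}(m)] = \mathrm{const} + \sum_{i=1}^{m}\log[\tilde{c}(i)/c(i)]$ reduces the equivalence to controlling the tail of $\sum_{i}\log[\tilde{c}(i)/c(i)]$. A second application of summation by parts, applied to the defining identity $\tilde{c}(m)-c(m) = \frac{1}{v}\sum_{i=0}^{v-1}\frac{v-i}{v}[c(m+i)+c(m-i)-2c(m)]$, expresses the partial sums $\sum_{i=1}^{M}[\tilde{c}(i)-c(i)]$ in terms of a bounded number of boundary differences $c(M+j)-c(j)$ with $|j|\leq v$; these stay uniformly bounded in $M$ thanks to the two-sided control $0 < \eta \leq c(m+v)-c(m) \leq K < \infty$. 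A routine estimate $|\log(\tilde{c}/c)| \leq |\tilde{c}-c|/\min(\tilde{c}, c)$ combined with a lower bound on $c$ (which follows iteratively from the same two-sided hypothesis) upgrades this to a uniform bound on $|\log[\pi/\tilde{\pi}]|$, whence an $\varepsilon>0$ as required. For the final step, $\tilde{\pi}$ has strictly increasing birth-and-death ratios with uniform gap $\delta$, so the MLSI \eqref{e314} for $\tilde{\pi}$ with constant $\delta^{-1}$ follows from \cite[Cor.~2.2]{wu2000new}; the equivalence $\varepsilon\leq \pi/\tilde{\pi} \leq \varepsilon^{-1}$ then transfers the MLSI to $\pi$ via \cite[Lem.~1.2]{Led01}, at the cost of the factor $\exp(4\varepsilon^{-1})$ (a crude bound on the oscillation of $\log(\pi/\tilde{\pi})$ via $|\log x|\leq 2/x$ for $x\in(0,\varepsilon^{-1}]$).

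The main obstacle is guessing and verifying the increment identity in the first step: the weights in the definition of $\tilde{c}$ look mysterious, and without the clean averaged form one only derives that $\tilde{c}$ is increasing on scale $v$, which is insufficient to apply the single-step MLSI criterion and would not yield the claimed constant.
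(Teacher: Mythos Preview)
The paper does not prove this lemma: it explicitly introduces it as a summary of results from \cite{CapPos07} combined with \cite[Lem.~1.2]{Led01}, and writes ``We do not reprove these results here.'' Your proposal is therefore an independent attempt rather than something to compare against an in-paper argument. Your first step is correct and is the algebraic heart of the construction: recognising the weights $(v-|j|)/v^{2}$ as the self-convolution of the uniform kernel on $\{0,\dots,v-1\}$ and differencing, one indeed obtains
\[
\tilde c(m+1)-\tilde c(m)=\frac{1}{v^{2}}\sum_{i=1}^{v}\bigl[c(m+i)-c(m+i-v)\bigr],
\]
from which $\delta=\eta/v$ follows. Your third step (Holley--Stroock transfer via \cite[Lem.~1.2]{Led01}) is also the intended route.

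The second step, however, has a genuine gap. You claim that $\sum_{i=1}^{M}[\tilde c(i)-c(i)]$ reduces to boundary terms ``$c(M+j)-c(j)$'' that ``stay uniformly bounded in $M$''; but since $c$ grows at least linearly these are of order $M$, not bounded. Even after exploiting the symmetry $T_{j}=T_{-j}$ of the triangular kernel, the surviving boundary contributions are short-range increments $c(M+i)-c(M+i-j')$ with $0<j'<v$, and the stated hypotheses say nothing about these --- only step-$v$ differences are controlled. A concrete obstruction: take $v=2$, $c(2k)=k+10$, $c(2k+1)=2k+10$. Both hypotheses hold (one checks $c(m+2)-c(m)\in\{1,2\}$), yet a direct computation gives $\tilde c(2k)\tilde c(2k+1)/\bigl[c(2k)c(2k+1)\bigr]\to 9/8$, so $\prod_{i\le M}\tilde c(i)/c(i)\to\infty$ and $\pi/\tilde\pi$ is unbounded. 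Thus not only does your argument for the equivalence fail, the equivalence itself can fail under the hypotheses as summarised here; the original Caputo--Posta formulation carries additional regularity (automatically satisfied in the paper's application to $\pi_{\Phi}$, where $c$ is far more rigid) that your argument would have to invoke as well.
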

Using this criterion, we derive MLSI for $\pi_{\Phi}$.
\begin{lemma}\label{p600}
The measure $\pi_{\Phi}$ satisfies the MLSI \eqref{e314} with a constant of the form $ {\Phi}^{1/(k+1)} c  $, where $c$ is a constant independent from $\Phi$.
\end{lemma}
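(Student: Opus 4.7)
The plan is to apply the Caputo-Posta criterion (Lemma \ref{l10}) to $\pi_\Phi$ with window width $v=k+1$, the natural period of the interpolation in \eqref{e100}.

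First I would compute the ratio $c(m) := \pi_\Phi(m-1)/\pi_\Phi(m)$ explicitly from the definition \eqref{e330}. Writing $m = q(k+1)+r$ with $r\in\{0,1,\dots,k\}$ and splitting according to whether $m$ and $m-1$ lie in the same interpolation block, one finds that $c$ is a step function constant on the blocks $B_q := \{q(k+1)+1,\ldots,(q+1)(k+1)\}$, with
$$c(m) = r_q := \left(\frac{\rho_\Phi(q)}{\rho_\Phi(q+1)}\right)^{1/(k+1)} \quad \text{for } m\in B_q.$$
Invoking \eqref{e120}, one has $r_q = \Phi^{-1/(k+1)}\psi_k(q)$ with
$$\psi_k(q) := \Big((q+1)\prod_{i=0}^{k-1}(k(q+1)-i)\Big)^{1/(k+1)},$$
a strictly increasing function of $q$ depending only on $k$ and asymptotic to $k^{k/(k+1)}(q+1)$. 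The key structural point is that $\Phi$ appears in $c$ only through the global prefactor $\Phi^{-1/(k+1)}$.

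Next I would verify the hypotheses of Lemma \ref{l10}. Since $c(m+k+1)-c(m) = \Phi^{-1/(k+1)}(\psi_k(q+1)-\psi_k(q))$ and the increments of $\psi_k$ are uniformly bounded above and below by positive constants depending only on $k$, one obtains
$$0 < C_1(k)\,\Phi^{-1/(k+1)} \leq c(m+k+1)-c(m) \leq C_2(k)\,\Phi^{-1/(k+1)}.$$
A direct (though tedious) calculation on the smoothed function $\tilde c$ defined by \eqref{e57} then yields $\tilde c(m+1)-\tilde c(m)\geq \kappa(k)\,\Phi^{-1/(k+1)}$ uniformly in $m$, so the uniform increasing constant of Lemma \ref{l10} satisfies $\delta^{-1} = O(\Phi^{1/(k+1)})$.

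The delicate step, and what I expect to be the main obstacle, is showing that the equivalence constant $\varepsilon$ in \eqref{e363} can be chosen to depend only on $k$, not on $\Phi$. For this I would exploit the factorization $c(m) = \Phi^{-1/(k+1)}\chi(m)$: since \eqref{e57} is linear in $c$, also $\tilde c(m) = \Phi^{-1/(k+1)}\tilde\chi(m)$, and the ratio $\tilde c(m)/c(m) = \tilde\chi(m)/\chi(m)$ is entirely $\Phi$-independent. It then suffices to show that $\sum_{m\geq 1}\log(\tilde\chi(m)/\chi(m))$ converges. A block-by-block calculation shows that the individual log-terms are only $O(1/q)$ near the two boundaries of $B_q$ but cancel at first order, so that $\sum_{m\in B_q}\log(\tilde\chi/\chi) = O(q^{-2})$ using the decay $\psi_k(q+1)-2\psi_k(q)+\psi_k(q-1) = O(q^{-3})$ (which follows from a Laurent expansion of $\psi_k$) combined with $\psi_k(q)\sim q$. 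Summing over $q$ yields a finite $\varepsilon = \varepsilon(k)$, and Lemma \ref{l10} then delivers MLSI for $\pi_\Phi$ with constant $\delta^{-1}\exp(4\varepsilon^{-1}) = \Phi^{1/(k+1)}\,c$ for some constant $c=c(k)$, as claimed.
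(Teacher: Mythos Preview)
Your approach is essentially the paper's: apply the Caputo--Posta criterion with window $v=k+1$ and exploit the factorization $c_\Phi(m)=\Phi^{-1/(k+1)}c_1(m)$ (your $\chi$ is the paper's $c_1$). Your observation that $c$ is constant on blocks $B_q$ with value $(\rho_\Phi(q)/\rho_\Phi(q+1))^{1/(k+1)}$ is correct and is in fact a cleaner packaging of the case analysis the paper carries out.

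Where you diverge is in the treatment of the equivalence constant $\varepsilon$. You propose to prove directly that $\sum_m\log(\tilde\chi(m)/\chi(m))$ converges via a block cancellation argument and the second-difference estimate on $\psi_k$. This works (the symmetry $\sum_{s=0}^k\frac{(k-s)(k-s+1)}{2}=\sum_{s=0}^k\frac{s(s+1)}{2}$ gives exactly the first-order cancellation you describe), but it is unnecessary. The paper avoids this computation entirely: it simply applies Lemma~\ref{l10} at $\Phi=1$, which already \emph{asserts} the existence of a finite $\varepsilon_1$, and then observes that since both $\pi_\Phi$ and $\tilde\pi_\Phi$ are exponential tilts of $\pi_1$ and $\tilde\pi_1$ by the \emph{same} factor $\Phi^{m/(k+1)}$, the equivalence transfers with $\varepsilon_\Phi\ge\varepsilon_1^2$. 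One small gap in your sketch: boundedness of the partial products $\prod_{i\le m}\tilde\chi(i)/\chi(i)$ alone does not immediately bound $\pi_\Phi/\tilde\pi_\Phi$, since the normalizing ratio $\pi_\Phi(0)/\tilde\pi_\Phi(0)$ still depends on $\Phi$; you need the extra observation that this ratio is itself controlled by the same partial-product bounds, which is straightforward but should be said.
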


\begin{proof}

For $\Phi \in \R_{+}$ we let $c_{\Phi}$ be defined by \eqref{e56} by replacing $\pi$ with $\pi_{\Phi}$. We define $\tilde{c}_{\Phi}$  by \eqref{e57} with the choice $v=k+1$. Moreover,we define  $\delta_{\Phi}$ as in \eqref{e362}, $\tilde{\pi}_{\Phi}$
as in \eqref{e361} and $\varepsilon_{\Phi}$ as in \eqref{e363}.
Let us prove that:
\be\label{e102}
\inf_{m \geq 1} c_{1}(m+k+1) - c_{1}(m ) >0, \quad \sup_{m \geq 1} c_1(m+k+1) -c_1(m) <+\infty. 
\ee
Equation \eqref{e120} tells that:
\be\label{hdef}
\forall n \in \N, \quad \frac{\rho_{1}(n-1)}{\rho_{1}(n)} = n \times \prod_{i=0}^{k-1}\big( k n -i \big):= h(n)
\ee
By definition of $n(m)$ and $\alpha(m)$ we have that for all $m \in \N$, $n(m+k+1) = n(m)$ and $\alpha(m+k+1) = \alpha(m)$. Therefore, by definition of $\pi_1$:
\beas
c_1(m+k+1) - c_1(m) &=&  \frac{\pi_1(m+k)}{\pi_1(m+k+1)} -\frac{\pi_1(m-1)}{\pi_1(m)} \\
&=&\frac{ \rho_1( n(m-1)+1 )^{ 1-\alpha(m-1)} \rho_1( n(m-1)+2  )^{\alpha(m-1)}} {\rho_1( n(m)+1 )^{ 1-\alpha(m)} \rho_1( n(m)+2  )^{\alpha(m)}}\\
&-&\frac{ \rho_1( n(m-1) )^{ 1-\alpha(m-1)} \rho_1( n(m-1)+1  )^{\alpha(m-1)}} {\rho_1( n(m) )^{ 1-\alpha(m)} \rho_1( n(m)+1  )^{\alpha(m)}}
\eeas
We have two cases
\begin{enumerate}
\item [\underline{$m \in (k+1)\N$}] In this case $n(m-1) = n(m) -1$ and $\alpha(m)=0,\alpha(m-1) = k/(k+1)$. Therefore:
\beas
c_1(m+k+1) - c_1(m) &=& \Big[\frac{\rho_1(n(m))}{ \rho_1(n(m)+1)}\Big]^{1/k+1} - \Big[\frac{\rho_1(n(m)-1)}{ \rho_1(n(m))}\Big]^{1/k+1} \\
									&=& h^{1/(k+1)}(n(m)+1) - h^{1/(k+1)}(n(m))
\eeas
where the function $x \mapsto h(x)$ has been defined in \eqref{hdef}.
\item[\underline{$m \notin (k+1)\N$}]  In this case $n(m-1) = n(m) $ and $\alpha(m)=\alpha(m-1) +1/(k+1)$. Therefore:
\beas
c_1(m+k+1) - c_1(m) &=& \Big[\frac{\rho_1(n(m)+1)}{ \rho_1(n(m)+2)}\Big]^{1/k+1} - \Big[\frac{\rho_1(n(m))}{ \rho_1(n(m)+1)}\Big]^{1/k+1} \\
									&=& h^{1/(k+1)}(n(m)+2) - h^{1/(k+1)}(n(m)+1)
\eeas
\end{enumerate}
It can be checked with a direct computation that $h$ is strictly increasing and $\lim_{x \rightarrow + \infty} \partial_x h^{1/k+1} (x) = k^{k/(k+1)}$. Using this fact in the two expressions above yields \eqref{e102}.
We are then entitled to apply Lemma \ref{l10} which tells that $\tilde{\pi}_{1}$ satisfies the MLSI \eqref{e314} with a positive constant $\delta^{-1}_1$, and $\pi_{1}$
satisfies the MLSI with constant $\delta_1^{-1} \exp(4 \varepsilon^{-1}_1)$. Let now consider $\Phi \neq 1$.
It is an elementary observation to show that $c_{\Phi}(m) = \Phi^{-1/(k+1)}c_1(m)$. This means that (see Definition \ref{d107}):
\bes
\pi_{\Phi}(m) =  \Big[ \sum_{m=0}^{+\infty}\Phi^{-m/(k+1)} \pi_1(m)\Big]^{-1} \Phi^{-m/(k+1)} \pi_1(m)
\ees
Moreover, by construction, (see \eqref{e57}) we also have that $\tilde{c}_{\Phi}= \Phi^{-1/(k+1)}c_1$. This implies that $\delta_{\Phi} = \Phi^{1/(k+1)} \delta_1$ and that
\bes
\tilde{\pi}_{\Phi}(m) =  \Big[ \sum_{m=0}^{+\infty}\Phi^{-m/(k+1)} \tilde{\pi}_1(m)\Big]^{-1} \Phi^{-m/(k+1)} \tilde{\pi}_1(m)
\ees
It is then easy to see that, using the two expressions for $\pi_{\Phi}$ and $\tilde{\pi}_{\Phi}$ we have just derived that $\varepsilon_{\Phi} \geq \varepsilon^2_1$.
Another application od Lemma \ref{l10} gives that $\pi_{\Phi}$ satisfies the MLSI with constant $\Phi^{-1/(k+1)} \delta^{-1}_1 \exp(4 \varepsilon_{1}^{-2} ) $.
\end{proof}
We can finally prove Theorem \ref{t70}.
\begin{proof}\textit{of Theorem \ref{t70}}
Consider $f: \N \rightarrow \R$ which is 1-Lipschitz. Then define $g:\N \rightarrow \R$ by: 
\be\label{e380}
 g(m):= (1-\alpha(m) )f(n(m)) + \alpha(m) f(n(m) +1)  \ee
where $n(m),\alpha(m)$ have been defined at \eqref{e100}. It is immediate to verify that $g$ is $1/(k+1)$-Lipschitz.
Because of Lemma \ref{p600} there exists $c$ independent from $\Phi$ such that $\pi_{\Phi}$ satisfies MLSI \eqref{e314} with constant $c \, \Phi^{1/(k+1)}$. We define $M:= \Phi + \frac{\Phi^{1/K+1} }{k+1} $/ Using the concentration bound from Lemma \ref{p460} on $(k+1)g$ we get that for any $R>M$:
\beas
&{}&\pi_{\Phi} \Big( \{ m: g(m) \geq  \bbE_{\pi_{\Phi}}(g)-M+R \} \Big)\\
&\leq & \exp\Big( -(k+1)(R-M)\log(R-M) +[c+\log \Phi ](R-M)+ o(R) \Big)  \\
&=& \exp\Big( -(k+1)R\log(R) +[c+\log \Phi ]R + o(R) \Big) 
\eeas
where to obtain the last inequality we used the fact that the difference $(R-M)\log(R-M)-R\log(R)$ is a function in the class $o(R)$.
It is proven in Lemma \ref{ll} (see Appendix) that $\bbE_{\pi_{\Phi}}(g)-M \leq \bbE_{\pi}(f)$. This implies that
$\pi_{\Phi} \big( \{ m: g(m) \geq  \bbE_{\pi_{\Phi}}(g)-M+R \} \big) \geq \pi_{\Phi} \big( \{ m: g(m) \geq  \bbE_{\rho_{\Phi}}(\rho) +R \} \big) $.
Finally we observe that:
\begin{eqnarray*}
&{}&\pi_{\Phi} \big( \{ m: g(m) \geq  \bbE_{\rho_{\Phi}}(f) +R \} \big)\\ 
  &\geq &  \pi_{\Phi} \big( \{ m: g(m) \geq  \bbE_{\rho_{\Phi}}(f) +R, m \in (k+1)\N \} \big)\\  
  &= &  \frac{1}{Z_{\Phi}} \rho_{\Phi} \big( \{ n: f(n) \geq  \bbE_{\rho_{\Phi}}(f) +R \} \big)\\ 
  &\geq &  \frac{1}{k+1}\rho_{\Phi} \big( \{ n: f(n) \geq  \bbE_{\rho_{\Phi}}(f) +R \} \big)
\end{eqnarray*}
where we used the the arithmetic geometric mean inequality to show that:
 $$Z_{\Phi} = \sum_{m=0}^{+\infty} \rho_{\Phi}(n(m))^{1-\alpha(m)} \rho_{\Phi} (n(m)+1)^{\alpha(m)} \leq k+1 .$$ 
 Summing up we have:
 \beas
 \rho_{\Phi} \big( \{ n: f(n) \geq  \bbE_{\rho_{\Phi}}(f) +R \} \big) &\leq& (k+1)\pi_{\Phi} \big( \{ m: g(m) \geq  \bbE_{\rho_{\Phi}}(f) +R \} \big) \\
 &\leq& (k+1)\exp\Big( -(k+1)R\log(R) +[c+\log \Phi ]R + o(R) \Big) 
 \eeas
 The proof of the Theorem is now concluded
\end{proof}
\subsection{Proof of Theorem \ref{squarelatticebound} and \ref{treebound} }
\subsubsection*{Preliminaries}
Let us specify the assumptions on the  jump intensity.
\begin{assumption}\label{as-01} \ 
The jump intensity $j:\mathcal{A} \to \R_{+} $ verifies the following requirements.
\begin{enumerate}[(1)]
\item
It has constant speed: there exists $v >0 $ such that
\begin{equation}\label{eq-11}
\forall  z \in \cX, \quad  v=\sum_{z':\zz }j(\zz):= \jj(z)  .
\end{equation}
\item It is everywhere positive: $j(\zz)>0$ for all $\zz \in \cA$.
\end{enumerate}
\end{assumption}

Here is some vocabulary about graphs.

\begin{mydef}\label{def-02}
Let $ \cA\subset\cX^2$ specify a directed graph $(\cX,\to)$ on $\cX$ satisfying Assumption \ref{as-03}.
\begin{enumerate}[(a)]
\item The distance $d(z,z')$ between two vertices $z$ and $z'$  is the length of the shortest walk joining $z$ with $z'$. Due to point (1) of Assumption \ref{as-03}, $d$ is symmetric. 
\item If $\bw=(x_{0} \to x_1 \to .. \to x_n)$ is a walk, then $\bw^*$ is the walk obtained by reverting the orientation of all arcs:
\begin{equation}\label{eq-60}
\bw^*:=(x_n \to x_{n-1} \to .. \to x_{0})
\end{equation}
\item
A closed walk $\cc=(x_0\to x_1 \to \cdots\rightarrow x_n=x_0)$ is said to be \textit{simple} if the cardinal  of the visited vertices $ \left\{x_0,x_1,\dots, x _{ n-1}\right\} $ is equal to the length $n$ of the walk. This means that a simple closed walk cannot be decomposed into several closed walks.
A non-closed walk $\bw=(x_0 \to x_1 \to x_2 \to ... \to x_n \neq x_0 )$ is said to be \textit{simple} if the cardinal  of the visited vertices $ \left\{x_0,x_1,\dots, x _{ n}\right\} $ is equal to the length $n+1$.
\end{enumerate}
\end{mydef}
\subsubsection*{Proof of Theorem \ref{squarelatticebound}}
 The proof of Theorem \ref{squarelatticebound} is based on the following Lemma, which ensures that we can control $\Phi_j(\cc)$ in terms of $\lambda^{\ell(\cc)}$. To ease the notation, we write $\Phi(\cdot)$ instead of $\Phi_j(\cdot)$.

\begin{lemma}\label{squarepatch}
Let $j$ be as in the hypothesis of Theorem \ref{squarelatticebound}. Then for any closed walk $\cc$,  $\Phi_j(\cc) \leq \lambda^{\ell(\cc)}$.
\end{lemma}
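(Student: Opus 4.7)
The plan is to reduce the statement to the case of simple closed walks and then use planarity: the upper bound in \eqref{eq7} is tailor-made for an internal-edge cancellation over the simply-connected region enclosed by such a walk.

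Any closed walk $\cc$ admits a decomposition as a concatenation of simple closed walks, with additive length and multiplicative characteristic $\Phi$. Indeed, if $\cc=(x_0 \to \cdots \to x_n)$ visits some vertex twice, $x_i=x_j$ with $0 \le i<j<n$, splitting at this vertex yields two shorter closed walks $\cc'=(x_i \to \cdots \to x_j)$ and $\cc''=(x_0 \to \cdots \to x_i=x_j \to \cdots \to x_n)$ with $\ell(\cc)=\ell(\cc')+\ell(\cc'')$ and $\Phi(\cc)=\Phi(\cc')\Phi(\cc'')$; iterating reduces the problem to the case of simple $\cc$.

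For a simple $\cc$, the Jordan curve theorem identifies $\cc$ as the clockwise (or counter-clockwise) boundary of a simply-connected polygonal region $R$, which is a finite union of unit faces $F_R \subset \cF$. Each internal edge of $R$ is traversed by two adjacent clockwise face boundaries in opposite directions, while each boundary arc of $\cc$ is traversed by exactly one such boundary, so
\bes
\prod_{\bff_x \in F_R} \Phi(\bff_x) \;=\; \Phi(\cc)\cdot \prod_{e \in \mathrm{int}(R)} \Phi(\bfe_e),
\ees
where $\Phi(\bfe_e)=j(a \to b)j(b \to a)$ for $e=\{a,b\}$. If $\cc$ is counter-clockwise the same identity holds with $\bff_x^*$ in place of $\bff_x$; combining the lower bound of \eqref{eq7} with $\Phi(\bff_x)\Phi(\bff_x^*)=\Phi(\bfe_{x,1})\Phi(\bfe_{x,2})\Phi(\bfe_{x+v_1,2})\Phi(\bfe_{x+v_2,1})$ gives the same upper bound $\Phi(\bff_x^*)\le \Phi(\bfe_{x+v_1,2})\Phi(\bfe_{x+v_2,1})$ as in the clockwise case, so only one case must be treated.

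Inserting the upper bound $\Phi(\bff_x)\le \Phi(\bfe_{x+v_1,2})\Phi(\bfe_{x+v_2,1})$ of \eqref{eq7} into the identity, the numerator becomes a product of $\Phi(\bfe_e)$ ranging over vertical edges whose left-adjacent face belongs to $F_R$ and horizontal edges whose bottom-adjacent face belongs to $F_R$. Every internal edge of $R$ appears once in that product and once in $\prod_{e \in \mathrm{int}(R)}\Phi(\bfe_e)$, so internal factors cancel and only the top and right boundary arcs of $R$ survive. For a clockwise traversal, rightward arcs of $\cc$ lie on the top boundary and downward arcs on the right boundary; since $\cc$ is closed, \#rightward$=$\#leftward and \#upward$=$\#downward, hence top $+$ right $=\ell(\cc)/2$. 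Applying \eqref{eq8} to each of the $\ell(\cc)/2$ remaining factors yields $\Phi(\cc)\le \lambda^{2\cdot \ell(\cc)/2}=\lambda^{\ell(\cc)}$. The only delicate point is this cancellation step: the specific "top and right" form of \eqref{eq7} is exactly what is needed for internal contributions to cancel without leaving any extra boundary factor behind.
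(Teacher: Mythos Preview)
Your proof is correct and takes a genuinely different route from the paper's. The paper argues by induction on the area enclosed by a simple closed walk: at each step it locates the lexicographically smallest vertex $x_1$ on $\cc$, replaces the corner $(x_0 \to x_1 \to x_2)$ by the opposite corner $(x_0 \to z \to x_2)$ of the face $\bff_{x_1}$, and uses one of the two inequalities in \eqref{eq7} (which one depends on the orientation of $\cc$) to conclude $\Phi(\cc) \leq \Phi(\tilde{\cc})$ for a walk $\tilde{\cc}$ of equal length but strictly smaller area; a further case split is needed according to whether $z$ already lies on $\cc$. Your argument sidesteps all of this: you write down the global face--product identity over the enclosed region, apply the upper bound of \eqref{eq7} to every face at once, and observe that the specific right/top shape of that bound makes all internal edge contributions telescope away, leaving exactly $\ell(\cc)/2$ surviving factors bounded via \eqref{eq8}. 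This is cleaner and explains in one stroke why \eqref{eq7} is stated the way it is. The paper's inductive corner--cutting, on the other hand, is more local and transfers more directly to the non-planar setting of Theorem \ref{treebound}, where no face decomposition exists and the analogous reduction runs along a spanning tree. One small omission in your write-up: simple closed walks of length two, $(x \to y \to x)$, enclose no region and are not covered by the Jordan--curve step; they are handled directly by \eqref{eq8}.
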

\begin{proof}
We observe that it is sufficient to consider the case when $\cc$ is simple . Simple closed walks have an orientation, which is unique, and it can be either clockwise or counterclockwise. The interior of a closed walk is then also well defined and we call area the number of squares in the interior of $\cc$. The proof is by induction on the area of the closed walk.

\begin{enumerate}
\item[] \underline{Base step} If the area of $\cc$ is zero and $\cc$ is simple, then $\cc$ is a walk of length two, i.e. $\cc \in \cE$. The conclusion then follows by \eqref{eq8}.
\item[]\underline{Inductive step}
Consider the minimum in the lexicographic order of the vertices of $\cc$.
W.l.o.g. such vertex can be chosen to be $x_1$. By construction then, either $(x_0,x_2) = (x_1+ \mathbf{e}_1,x_1 + \mathbf{e}_2 )$ or
$ (x_0,x_2) =(x_1+ \mathbf{e}_2,x_1 + \mathbf{e}_1 )$, see Figure \ref{rett}.

\begin{itemize}
\item[(a)] \underline{$ (x_0,x_2) = (x_1+ \mathbf{e}_1,x_1 + \mathbf{e}_2 ).$} We define $z,$ $\cc_{x_2 \to x_0}$ and $\bp$ by:  
 \be\label{eq70}
 z: = x_2+v_1=x_0+v_2, \quad \cc : = ( x_0 \to x_1 \to x_2 \to \cc_{x_2 \to x_0}), \quad \bp : = (x_0 \to z \to x_2)
\ee
 We also define $\tilde{\cc}$ by concatenating $\bp$ and $\cc_{x_2 \to x_0}$ (see Figure \ref{corner}):
 \bes
 \tilde{\cc} := (\bp \to \cc_{x_2 \to x_0} )
 \ees
 We then have, recalling that $\bp^*$ is obtained by reversing $\bp$ (see Definition \ref{defs-01}):
 \beas
 \Phi(\cc) &=& j(x_0 \to x_1) j(x_1 \to x_2) \Phi(\cc_{x_2 \to x_0} ) \\
&=& \frac{  j(x_0 \to x_1) j(x_1 \to x_2)}{\Phi(\bp)}  \Phi(\bp) \Phi(\cc_{x_2 \to x_0})\\
&=&\frac{  j(x_0 \to x_1) j(x_1 \to x_2)}{ \Phi(\bp) }   \Phi(\tilde{\cc}) \\
&=&
\frac{  j(x_0 \to x_1) j(x_1 \to x_2) \Phi(\bp^*)} {\Phi(\bp^*) \Phi(\bp) } \Phi(\tilde{c})   \\
&=&
\frac{  \Phi(\mathbf{f}_{x_1} ) }{\Phi(\bfe_{x+v_2,1}) \Phi(\bfe_{x + v_1,2})} \Phi(\tilde{\cc}) 
 \eeas
 By \eqref{eq8}, $\frac{\Phi(\mathbf{f}_{x_1} ) }{\Phi(\bfe_{x_1+v_2,1}) \Phi(\bfe_{x_1 + v_1,2})}\leq 1$. Since ${\ell(\tilde{\cc})} ={\ell(\cc)}$, we would be done if we could show that $\Phi(\tilde{\cc}) \leq \lambda^{\ell(\cc)}$. We have  two cases:
\begin{itemize}
\item[(a.1)] \underline{$z$ was not touched by $\cc.$}
In this situation, $\tilde{\cc}$ is a simple closed walk. By construction, $\tilde{\cc}$ 
lies in the interior of $\cc$. Moreover
$\mathbf{f}_{x_1}$ belongs to the interior of $\cc$ but does not belong to the interior of $\tilde{\cc}$. Therefore, we can use the inductive hypothesis and obtain that $ \Phi(\tilde{\cc}) \leq \lambda^{\ell(\tilde{\cc})}$, which is the desired result.
\item[(a.2)] \underline{$z$ was touched by $\cc.$}
In this case $z = x_{j}$ for some $j \geq 3$.  We observe that we can write $\tilde{\cc}=(\tilde{\cc}_1 \to \tilde{\cc}_2)$ with $\tilde{\cc}_1=(x_2 \to .. \to x_{j}=z \to x_2 )$
and $\tilde{\cc}_2 = ( x_j=z \to x_{j+1} .. \to x_0 \to z)$ and that both $\tilde{\cc}_1$ and $\tilde{\cc}_2$ are simple closed walks which lie in the interior of $\cc$ and have disjoint interiors, see Figure \ref{fig:a2} . Moreover, since none of the walks has $\mathbf{f}_{x_1}$ in its interior, by inductive hypothesis $\Phi(\tilde{\cc}_1) \leq \lambda^{\ell(\tilde{\cc}_1)}$ and $\Phi(\tilde{\cc}_2) \leq \lambda^{\ell(\tilde{\cc}_2)} $. But then $\Phi(\tilde{\cc}) = \Phi(\tilde{\cc}_1)\Phi(\tilde{\cc}_2) \leq  \lambda^{\ell(\tilde{c}_1) + \ell(\tilde{c}_2) } =  \lambda^{\ell(\tilde{c})  }$, which is the desired result.
\end{itemize}
\item[(b)] \underline{ $ (x_0,x_2) = (x_1+ \mathbf{e}_2,x_1 + \mathbf{e}_1 )$ }
In this case the cycle the simple walk $\cc$ is counterclockwise oriented. Let $\cc_{x_2 \to x_0}$
be defined as in \eqref{eq70} above. Moreover we define
\bes
z := x_0 + v_1= x_2 +v_2 , \quad \bp: =(x_0 \to z \to x_2)
\ees
and $\tilde{\cc}:= ( \bp \to \tilde{\cc}_{x_2 \to x_0})$.
We have:

\beas
 \Phi(\cc) &=& j(x_0 \to x_1) j(x_1 \to x_2) \Phi(\cc_{x_2 \to x_0}) \\
&=& \frac{  j(x_0 \to x_1) j(x_1 \to x_2)}{\Phi(\bp)}  \Phi(\bp) \Phi(\cc_{x_2 \to x_0}) \\
&=&\frac{  j(x_0 \to x_1) j(x_1 \to x_2)}{\Phi(\bp)}   \Phi(\tilde{\cc}) \\
&=&
\frac{  j(x_0 \to x_1) j(x_1 \to x_2)  j(x_2 \to x_1)j(x_1 \to x_0)} { j(x_0 \to z)j(z \to x_2)j(x_2 \to x_1)j(x_1 \to x_0) }   \Phi(\tilde{\cc}) \\
&=&
\frac{  \Phi(\bfe_{x_1,2}) \Phi(\bfe_{x_1,1}) }{\Phi(\mathbf{f}_{x_1}) } \Phi(\tilde{\cc}) 
 \eeas
Thanks to \eqref{eq7}, $\frac{  \Phi(\bfe_{x_1,2}) \Phi(\bfe_{x_1,1}) }{\Phi(\mathbf{f}_{x_1}) } \leq 1 $. The proof that $\Phi(\tilde{\cc})\leq \lambda^{\ell(\cc)}$ is the same as in point (a).
\end{itemize}
\end{enumerate}
\end{proof}
\newpage

\tikzset{middlearrow/.style={
        decoration={markings,
            mark= at position 0.5 with {\arrow{#1}} ,
        },
        postaction={decorate},
       ultra thick,
        red
    }
}
\tikzset{bmiddlearrow/.style={
        decoration={markings,
            mark= at position 0.5 with {\arrow{#1}} ,
        },
        postaction={decorate},
        ultra thick,
        blue
    }
}

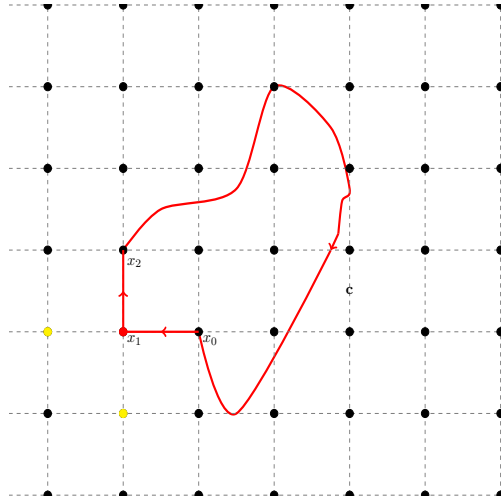
\begin{figure}[h!]
  \centering
 \resizebox{6.5cm}{6.5cm}{ \begin{tikzpicture}
    \coordinate (Origin)   at (0,0);

    \clip (-3,-2) rectangle (10cm,10cm); 
    \pgftransformcm{1}{0}{0}{1}{\pgfpoint{0cm}{0cm}}
    \coordinate (Bone) at (0,2);
    \coordinate (Btwo) at (2,-2);
    \draw [ ultra thick, red] plot [smooth  ] coordinates {(0,4) (1,5) (3,5.5) (4,8) (5.5,7) (6,5.5) (5.8,5.2)(5.7,4.4)};
       \draw [ ultra thick, red] plot [smooth]  coordinates { (5.5,4) (3,0) (2,2)};
       \draw[->,ultra thick,red] (5.7,4.4)--(5.5,4);
    \draw[style=help lines,dashed] (-14,-14) grid[step=2cm] (14,14);
    \foreach \x in {-7,-6,...,7}{
      \foreach \y in {-7,-6,...,7}{
        \node[draw,circle,inner sep=2pt,fill] at (2*\x,2*\y) {};
      }
    }
        \node[] at (0.3,1.8) {$x_1$};
          \node[] at (6,3) {$\cc$};
          \node[draw,red,circle, inner sep = 2pt,fill] at (0,2) {};
           \node[draw,yellow,circle, inner sep = 2pt,fill] at (0,0) {};
            \node[draw,yellow,circle, inner sep = 2pt,fill] at (-2,2) {};
          \node[] at (2.3,1.8) {$x_0$};
         \node[] at (0.3,3.7) {$x_2$};

        \draw[middlearrow={>}] (0,2) -- (0,4);
        \draw[middlearrow={>}] (2,2) -- (0,2);
  \end{tikzpicture}}
  \caption{A simple closed walk $\cc$ (red). $x_1$ is the minimum in the lexicographic order among the vertices visited by the closed walk. Because of that the walk cannot pass neither through the vertex left to $x_1$, nor through the vertex below $x_1$  (yellow). Therefore $\cc$ must pass through the vertices above $x_1$ and right of $x_1$. If $x_2$ is the vertex above $x_1$ the walk is clockwise oriented}
  \label{rett}
\end{figure}

\tikzset{middlearrow/.style={
        decoration={markings,
            mark= at position 0.5 with {\arrow{#1}} ,
        },
        postaction={decorate},
        thick,
        red
    }
}
\tikzset{bmiddlearrow/.style={
        decoration={markings,
            mark= at position 0.5 with {\arrow{#1}} ,
        },
        postaction={decorate},
        thick,
        blue
    }
}

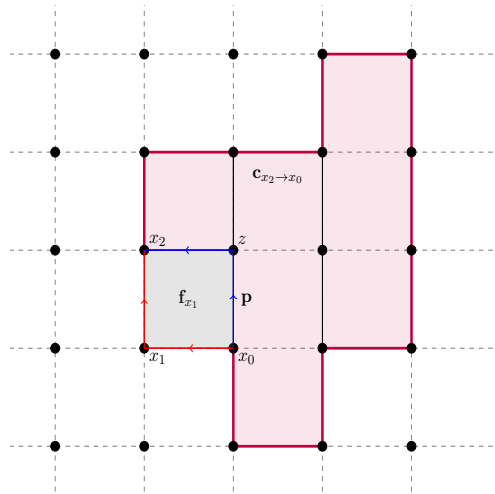
\begin{figure}[h!]
  \centering
 \resizebox{6.5cm}{6.5cm}{ \begin{tikzpicture}
    \coordinate (Origin)   at (0,0);

    \clip (-3,-1) rectangle (8cm,9cm); 
    \pgftransformcm{1}{0}{0}{1}{\pgfpoint{0cm}{0cm}}
    \coordinate (Bone) at (0,2);
    \coordinate (Btwo) at (2,-2);
    \draw[style=help lines,dashed] (-14,-14) grid[step=2cm] (14,14);
              \draw [fill=purple, fill opacity = 0.1] (0,4) rectangle (2,6);
			\draw [fill=purple, fill opacity = 0.1] (2,0) rectangle (4,6);
              \draw [fill=purple, fill opacity = 0.1] (4,2) rectangle (6,8);
              \draw[ultra thick,purple](0,4)--(0,6)--(2,6)--(4,6)--(4,8)--(6,8)--(6,6)--(6,2)--(4,2)--(4,0)--(2,0)--(2,2);    
    \foreach \x in {-7,-6,...,7}{
      \foreach \y in {-7,-6,...,7}{
        \node[draw,circle,inner sep=2pt,fill] at (2*\x,2*\y) {};
      }
    }
        \node[] at (0.3,1.8) {$x_1$};
         \node[] at (1,3) {$\mathbf{f}_{x_1}$};
                  \node[] at (2.3,3) {$\bp$};
                  \node[] at (3,5.5) {$\cc_{x_2 \to x_0}$};
          \node[] at (2.3,1.8) {$x_0$};
         \node[] at (0.3,4.2) {$x_2$};
         \node[] at (2.2,4.2) {$z$};

    \filldraw[fill=gray, fill opacity=0.2] ($(0,2)$)
        rectangle ($(2,4)$);
        \draw[middlearrow={>}] (0,2) -- (0,4);
        \draw[bmiddlearrow={<}] (0,4) -- (2,4);
        \draw[bmiddlearrow={<}] (2,4) -- (2,2);
        \draw[middlearrow={>}] (2,2) -- (0,2);
  \end{tikzpicture}}
  \caption{$\tilde{\cc}$ is constructed by cutting $(x_0 \to x_1 \to x_2)$ from $\cc$ and replacing it with $\bp= (x_0 \to z \to x_2)$ (blue). $\tilde{\cc}$ has the same perimeter  but smaller area than $\cc$}
  \label{corner}
\end{figure}
 \newpage
 
 \tikzset{middlearrow/.style={
        decoration={markings,
            mark= at position 0.5 with {\arrow{#1}} ,
        },
        postaction={decorate},
        ultra thick,
       purple
    }
}
\tikzset{bmiddlearrow/.style={
        decoration={markings,
            mark= at position 0.5 with {\arrow{#1}} ,
        },
        postaction={decorate},
        thick,
        green
    }
}

\begin{figure}[h!]
  \centering
 \resizebox{6.5cm}{6.5cm}{ \begin{tikzpicture}
    \coordinate (Origin)   at (0,0);

    \clip (-3,-1) rectangle (8cm,9cm); 
    \pgftransformcm{1}{0}{0}{1}{\pgfpoint{0cm}{0cm}}
    \coordinate (Bone) at (0,2);
    \coordinate (Btwo) at (2,-2);
    \draw[style=help lines,dashed] (-14,-14) grid[step=2cm] (14,14);
              \draw [fill=blue, fill opacity = 0.1] (0,4) rectangle (2,8);
			\draw [fill=red, fill opacity = 0.1] (2,0) rectangle (4,4);
              \draw [fill=blue, fill opacity = 0.1] (2,6) rectangle (4,8);
              \draw[ultra thick,purple](0,4)--(0,6)--(0,8)--(2,8)--(4,8)--(4,6)--(2,6)--(2,4)--(4,4)--(4,2)--(4,0)--(2,0)--(2,2)--(2,4);    
             
    \foreach \x in {-7,-6,...,7}{
      \foreach \y in {-7,-6,...,7}{
        \node[draw,circle,inner sep=2pt,fill] at (2*\x,2*\y) {};
      }
    }
        \node[] at (0.3,1.8) {$x_1$};
         \node[] at (1,3) {$\mathbf{f}_{x_1}$};
                  \node[] at (2.3,3) {$\mathbf{p}$};
          \node[] at (2.3,1.8) {$x_0$};
         \node[] at (0.3,4.2) {$x_2$};
         \node[] at (2.2,4.2) {$z$};
			 \node[] at (3,3) {$\tilde{\mathbf{c}}_2$};
         \node[] at (1,5.5) {$\tilde{\mathbf{c}}_1$};

    \filldraw[fill=gray, fill opacity=0.2] ($(0,2)$)
        rectangle ($(2,4)$);
        \draw[middlearrow={>}] (0,2) -- (0,4);
        \draw[bmiddlearrow={<}] (0,4) -- (2,4);
        \draw[bmiddlearrow={<}] (2,4) -- (2,2);
        \draw[middlearrow={>}] (2,2) -- (0,2);
  \end{tikzpicture}}
  \caption{An illustration of  case (a.2) in the proof of Lemma \ref{squarepatch}. The purple contour is $\mathbf{c}$, the green path is $\mathbf{p}$. The blue and red areas represent the interior of $\tilde{\mathbf{c}}_1$ and $\tilde{\mathbf{c}}_2$ respectively. }
  \label{fig:a2}
\end{figure}
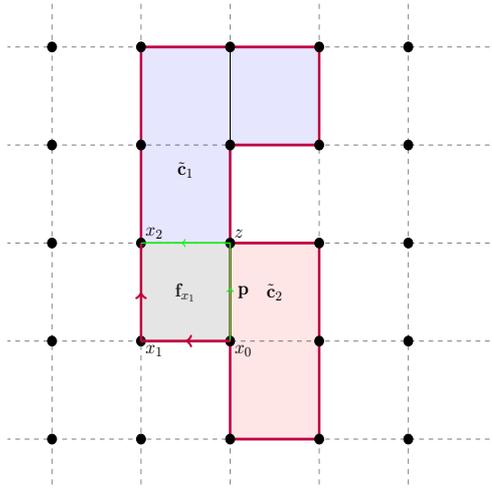


We  can now prove Theorem \ref{squarelatticebound}. Let us first state a simple Lemma we shall need, without proving it.
\begin{lemma}\label{lm:conddens}
 Let $\bbP,\bbQ$ be two probability measures on the same probability space, and let $\bbQ << \bbP$, and $M = \frac{d\bbQ}{d\bbP}$. If $A$ is an event such that $\bbQ(A) >0$ then
 \bes
\frac{ d \bbQ[\cdot \big | A]}{d \bbP[ \cdot \big | A]} = M \ind_{A} \frac{\bbP(A)}{ \bbQ(A)}
 \ees
 \end{lemma}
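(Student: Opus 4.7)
The plan is to verify the identity by a direct computation, checking that the claimed Radon-Nikodym derivative satisfies the defining integration property against an arbitrary measurable test set. No deep ingredients are needed; the only subtlety is making sure that all conditionals are well defined.

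First I would observe that $\bbP(A) > 0$: indeed, since $\bbQ \ll \bbP$, the set $A$ cannot be $\bbP$-null because $\bbQ(A) > 0$. Hence both conditional probabilities $\bbP[\cdot | A]$ and $\bbQ[\cdot | A]$ make sense, and they can be written explicitly as
\bes
\bbP[B|A] = \frac{1}{\bbP(A)}\int_B \ind_A \, d\bbP, \qquad \bbQ[B|A] = \frac{1}{\bbQ(A)}\int_B \ind_A \, d\bbQ
\ees
for every measurable set $B$.

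Next, using that $M = d\bbQ/d\bbP$, I would rewrite the right-hand side for $\bbQ[\cdot|A]$ via
\bes
\int_B \ind_A \, d\bbQ = \int_B \ind_A M \, d\bbP.
\ees
On the other hand, against the reference measure $\bbP[\cdot|A]$, integrating the candidate density $M \ind_A \bbP(A)/\bbQ(A)$ over $B$ gives
\bes
\int_B M \ind_A \frac{\bbP(A)}{\bbQ(A)} \, d\bbP[\cdot|A] = \frac{\bbP(A)}{\bbQ(A)} \cdot \frac{1}{\bbP(A)} \int_B \ind_A M \, d\bbP = \frac{1}{\bbQ(A)}\int_B \ind_A \, d\bbQ = \bbQ[B|A].
\ees
Since $B$ was arbitrary, the uniqueness part of the Radon-Nikodym theorem yields the stated formula. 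There is no real obstacle; the only thing to be careful about is the non-nullity of $A$ under $\bbP$, which I addressed at the start.
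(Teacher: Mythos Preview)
Your proof is correct. The paper explicitly states this lemma \emph{without proving it}, so there is no argument to compare against; your direct verification via the defining property of the Radon--Nikodym derivative is exactly the natural way to fill this gap.
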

Thanks the last two Lemmas, the proof is then an almost straightforward application of Girsanov's theorem
\begin{proof}
Let $\bbP^x$ be a random walk of intensity $j$. We denote $\bbS^x_{\lambda}$ the random walk with constant intensity $\lambda$ started at $x$. The density of $\bbP^x$ w.r.t. to $\bbS^x_{\lambda}$ is given by (see  \cite{Jac75} or \cite{CONFphd} for a more ad-hoc version) is:
\bes\label{eq6}
\frac{d\bbP^x}{d \bbS^x_{\lambda}}=
	\exp \bigg(\sum_{i=1}^{N_1}  \log j( X_{T_{i-1}} \to X_{T_i})-\log(\lambda) -\int _{0}^1  \jj(t,X _{t^-}) + 4 \lambda \, dt \bigg)
\ees
where $N_1$ is the total number of jumps up to time $1$ and $T_i$ is the $i$-th jump time.
Since $\bbP^x$ is a CSRW, the term $\int _{0}^1\jj(t,X _{t^-})\, dt$ is constant. Moreover, if we call $\bw(X)$ the random sequence $(X_0 \to X_{T_1} \to..\to X_{T_{N_1}} )$ and use Lemma \ref{lm:conddens}  we obtain
\bes
\frac{d\bbP^{xy}}{d{}\bbS^{xx}_{\lambda} } \propto \mathbf{1}_{\{X_0=X_1=x \}}\Phi_j(\bw(X)) \lambda^{-\ell(\bw(X))}
\ees
 But then, since on the event $\{X_0=X_1=x\}$, $\bw(X)$ is a closed walk, we can apply Lemma \ref{squarepatch} to conclude that the density has a global upper bound on path space. The conclusion immediately follows from Lemma \ref{lastlemma}, which we prove in the appendix.
\end{proof}
\subsubsection*{Proof of Theorem \ref{treebound}}
Let us first specify the assumptions we make on the graph.
\begin{assumption}\label{as-03}
The directed graph $(\cX,\to)$ satisfies the following requirements:
 \begin{enumerate}[(1)]
 \item $\cA$ is symmetric:  $(x \to y )\in \mathcal{A} \Rightarrow (y \to x ) \in \mathcal{A}$.
\item It is connected: for any $x,y \in \cX^2$ there exist a directed walk from $x$ to $y$ 
\item
It is of bounded degree
\item
It has no loops, meaning that $(z \to z)\not\in \mathcal{A}$ for all $z\in\cX.$
\end{enumerate}
\end{assumption}
Let us prove the correspondent of Lemma \ref{squarepatch}.
 \begin{lemma}\label{treepatch}
Let $j$ satisfy the assumptions of Theorem \ref{treebound}. Then we have:
 \bes
 \forall \cc \in \cC, \quad \Phi_j(\cc) \leq (\lambda \delta)^{\ell(\cc)}
 \ees
 \end{lemma}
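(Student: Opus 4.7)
The proof naturally splits along the decomposition $\cC = \cE \cup \cC^*$ from Definition \ref{defs-01}. The case $\cc \in \cE$ is immediate: since $\ell(\cc) = 2$, hypothesis \eqref{eq1} is precisely the required bound $\Phi_j(\cc) \leq (\lambda\delta)^{\ell(\cc)}$.

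The main case is $\cc = \cc_{\bfe} \in \cC^*$ for some $\bfe = (x \to y \to x) \in \cE^*$. By construction $\cc_{\bfe}$ concatenates the chord $(x \to y) \notin \cT$ with the unique simple directed walk from $y$ to $x$ in $\cT$, so $\ell(\cc_{\bfe}) = 1 + d_{\cT}(y,x)$. The plan is to invoke the upper bound in \eqref{eq2}, which reduces matters to controlling the product $\prod \Phi_j(\bfe')$ over the length-two closed walks $\bfe' \in \cE \setminus \{\bfe\}$ intersecting $\cc_{\bfe}$, and then bound each factor by $(\lambda\delta)^2$ via \eqref{eq1}.

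The crux is the exact count of $\bfe'$ contributing to this product. Each arc $a$ of $\cc_{\bfe}$ singles out one element of $\cE$ sharing an arc with $\cc_{\bfe}$, namely the length-two closed walk consisting of $a$ and its reverse $a^*$. I would argue that this correspondence is injective on the arcs of $\cc_{\bfe}$: if two distinct arcs $a_1, a_2$ of $\cc_{\bfe}$ yielded the same $\bfe'$, then $a_2 = a_1^*$, which cannot occur because the tree walk from $y$ to $x$ is simple (and therefore never traverses both an arc and its reverse) and because the chord reverse $(y \to x)$ does not lie in $\cT$ by the defining property of $\cE^*$. Consequently exactly $\ell(\cc_{\bfe}) - 1$ elements of $\cE \setminus \{\bfe\}$ intersect $\cc_{\bfe}$, and combining this count with \eqref{eq1} and the prefactor in \eqref{eq2} yields the claimed bound after simplifying the resulting power of $\lambda\delta$. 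The main technical obstacle is precisely this injectivity claim, which relies simultaneously on the simplicity of the tree walk and on the defining property of $\cE^*$; once it is in hand, only a routine manipulation of exponents remains.
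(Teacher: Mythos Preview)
Your final ``routine manipulation of exponents'' does not work. With $\ell(\bfe)=2$, the upper bound in \eqref{eq2} together with your count of $\ell(\cc_{\bfe})-1$ factors and \eqref{eq1} gives
\[
\Phi_j(\cc_{\bfe}) \le (\lambda\delta)^{\ell(\bfe)-1}\prod_{\bfe'}\Phi_j(\bfe')\le (\lambda\delta)\cdot\big((\lambda\delta)^2\big)^{\ell(\cc_{\bfe})-1}=(\lambda\delta)^{2\ell(\cc_{\bfe})-1},
\]
which exceeds the target $(\lambda\delta)^{\ell(\cc_{\bfe})}$ whenever $\lambda\delta>1$; nothing in the hypotheses forces $\lambda\delta\le1$. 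Reading the prefactor exponent as $\ell(\cc_{\bfe})-1$ only makes the discrepancy worse. The conditions \eqref{eq1} and \eqref{eq2} are \emph{not} calibrated so that substituting one into the other directly yields the bound on $\Phi_j(\cc_{\bfe})$; they are calibrated so that the \emph{ratio} $\Phi_j(\cc_{\bfe})\big/\prod_{\bfe'}\Phi_j(\bfe')$ (and its reciprocal, via the lower bound) are controlled, which is what an inductive step needs.

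There is also a scope issue. Although the displayed statement reads $\cc\in\cC$, the lemma is invoked, in parallel with Lemma~\ref{squarepatch}, to control $\Phi_j(\bw(X))$ for the random closed walk $\bw(X)$ traced by the bridge, so the bound is needed for \emph{every} closed walk $\cc$, not just those in the basis $\cC$. The paper's proof accordingly treats a general closed walk by induction on $n(\cc)=|\{\bfe\in\cE^*:\bfe\cap\cc\neq\emptyset\}|$: at the inductive step one picks an arc $z\to z'$ of $\cc$ outside $\cT$, replaces it by the tree walk joining $z$ and $z'$ to obtain $\tilde{\cc}$ with $n(\tilde{\cc})<n(\cc)$, and compares $\Phi_j(\cc)$ with $\Phi_j(\tilde{\cc})$. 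It is precisely this comparison that uses either the upper or the lower inequality in \eqref{eq2}, depending on whether $\cc_{\bfe}=\cc_{z\to z'}$ or $\cc_{\bfe}=\cc_{z'\to z}$. Your direct argument bypasses this mechanism and, even were the arithmetic repaired, would cover only $\cC$ and not the closed walks actually required.
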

 \begin{proof}
Again, to ease the notation, we write $\Phi$ instead of $\Phi_j$. The proof goes by induction on the number of elements in $\ces$ that intersect  $\cc$. To this aim we define: 
 \bes
 \quad n(\cc) = \Big |  \left\{ \bfe \in \ces : \bfe \cap \cc \neq \emptyset \right\}  \Big |
 \ees
 
 \begin{enumerate}
\item[] \underline{Base step} If $n(\cc)=0$, then $\cc \subseteq \cT$. It is easy to see that $\cc$ can be decomposed into closed walks of length two. The conclusion then follows from \eqref{eq1}.
\item[] \underline{Inductive step} Consider any $\bfe \in \cE^*$ such that $\bfe \cap \cc \neq \emptyset$. Then there are two possible cases:
\begin{itemize}
\item[] \underline{$\big| \bfe \cap \cc \big| =2$.} In this case $\cc$ can be seen as the concatenation of $\bfe $ with two other closed walks, say $\cc_1,\cc_2$. Clearly, $n(\cc_1) ,n(\cc_2)< n(\cc)$, and therefore applying the inductive hypothesis and \eqref{eq1} we have:
\bes
\Phi(\cc) = \Phi(\cc_1) \Phi(\bfe) \Phi( \cc_2) \leq (\delta \lambda)^{\ell(\cc_1)+2 + \ell(\cc_2)} = ( \lambda \delta) ^{\ell(\cc)}
\ees
\item[]\underline{$\big| \bfe \cap \cc \big| =1$.} 
In this case, let us call $\zz$ the only arc in $\bfe \cap \cc $. By recalling the definition of  $\cc_{\bfe}$  at point (e) of Definition \ref{defs-01}, we have two subcases:
\begin{enumerate}
\item[] \underline{$\cc_{\bfe} = \cc_{\zz}$.} \
We define $\cc_{x_0 \to z}$, $\cc_{z' \to x_0}$ and $\bw_{z' \to z}$ through the following identities
\bea\label{eq5}
\cc &=&  (\cc_{x_0 \to z} \to z \to z' \to \cc_{z' \to x_0} )\\
\nonumber \mathbf{c}_{\bfe}& =&(z \to z' \to .. \to z) = (z \to z' \to \bw_{z' \to z})
\eea
  Finally, we also define $\tilde{\cc}$ as follows:
\bes
\tilde{\cc} = (\cc_{x_0,z} \to \bw^{*}_{z' \to z} \to \cc_{z,x_0})
\ees
where $\bw^{*}_{z' \to z}$  is the reversed walk (see Definition \ref{defs-01}).  Let us remark that, by definition of $\mathbf{c}_{\zz}$, we have $\bw_{z' \to z} \subseteq \cT$. But then also $\bw^*_{z' \to z} \subseteq \cT$ because $\cT$ is a symmetric graph. Therefore $ n( \tilde{\cc} )  = n(\cc)-1$. We have:
\beas
\Phi(\cc) &=& \Phi(\cc_{x_0 \to z }) j(z \to z') \Phi(\cc_{z' \to x_0} ) \\
				&=& \Phi(\cc_{x_0 \to z }) \Phi(\bw^*_{z' \to z}) \Phi(\cc_{z' \to x_0} ) \frac{ j(z \to z')}{\Phi(\bw^*_{z' \to z}) }\\
				 &=& \Phi(\tilde{\cc}) \,  \frac{ j(z \to z')}{\Phi(\bw^*_{z' \to z})}
\eeas
Using the inductive hypothesis on $\Phi(\tilde{\cc})$ we have that $\Phi({\tilde{\cc}}) \leq (\lambda\delta)^{\ell(\tilde{\cc})} = (\lambda\delta)^{+\ell(\cc) +\ell(\cc_\bfe) -1}$. If we could show that $  \frac{j(t,\zz) }{ \Phi(\bw^*_{z \to z'}) } \leq (\lambda\delta)^{-\ell(\cc_e)+1} $, then we would be done. For this aim, let us observe that by concatenating $\bw^*_{z' \to z } $ with $z ' \to z$ we obtain $ \cc_{z' \to z}$. Then:
\be\label{eq4}
  \frac{j(\zz)}{ \Phi(\bw^*_{z' \to z}) } = \frac{ j(\zz) j(z' \to z) }{ \Phi(\bw^*_{z' \to z }) j(z' \to z)  }=\frac{ \Phi(\bfe) }{ \Phi(\mathbf{c}_{z'\to z } )  }\ee

Finally, we observe that:
\bes
  \Phi(\mathbf{c}_{z' \to z } ) = \frac{1}{\Phi(\mathbf{c}_{\zz}) } \prod_{\stackrel{\bfe' \in \cE,}{ \bfe' \cap \cc_{\bfe} \neq \emptyset} } \Phi(\bfe') = \frac{1}{\Phi(\mathbf{c}_{\bfe}) } \prod_{\stackrel{\bfe' \in \cE,}{ \bfe' \cap \cc_{\bfe} \neq \emptyset} } \Phi(\bfe') 
\ees
which combined with \eqref{eq4} gives:
\bes
\frac{j(\zz)}{ \Phi(\bw^*_{z' \to z})} =  \Phi({\mathbf{c}_{\bfe}}) \left\{ \prod_{\stackrel{\bfe' \in \cE, \bfe' \neq \bfe }{ \bfe' \cap \cc_{\bfe} 
\neq \emptyset} } \Phi(\bfe')   \right\}^{-1} \ees
where we used the fact that, by construction, $\bfe$ is the only element of $\cE$ which intersects $\cc_{\bfe}$ and is not in $\cT$.
Using  the upper bound for $\mathbf{c}_{\bfe}$ in \eqref{eq2} the conclusion follows.

\item[] \underline{$\mathbf{c}_{\bfe} = \cc_{z' \to z}$}
Let $ \cc_{x_0 \to z}, \cc_{z' \to x_0} $ be defied as in \eqref{eq5}, $\bw_{z \to z'}$, and $\tilde{\cc}$ be defined by:
\bes
\mathbf{c}_{\bfe } =( z' \to z \to \bw_{\zz} ), \tilde{\cc} = ( \cc_{x_0 \to z} \to \bw_{\zz} \to \cc_{z' \to x_0})
\ees
We have:
\beas
\Phi(\cc) &=& \Phi(\cc_{x_0 \to z} ) j(\zz) \Phi(\cc_{z' \to x_0 } ) \\
				&=& \Phi(\cc_{x_0 \to z} ) \Phi( \bw_{z \to z'} )   \Phi( \cc_{z' \to x_0 } ) \frac{ j(\zz)}{\Phi( \bw_{\zz} )  }  \\
				&=& \Phi(\tilde{\cc} )  \frac{ j(\zz) }{\Phi( \bw_{\zz} )  } \\
				&=&\Phi( \tilde{\cc} )  \frac{ j(\zz) j(z' \to z ) }{\Phi( \bw_{\zz} ) j(z' \to z) }\\
				&=&\Phi(\tilde{\cc} )  \, \frac{\Phi(\bfe) }{\Phi( \mathbf{c}_{\bfe} ) }
\eeas
By construction, $n(\tilde{\cc}) = n(\cc)-1$, so we can use the inductive hypothesis together with the  the lower bound in \eqref{eq2} to obtain:
\bes
\Phi(\tilde{\cc} )  \, \frac{\Phi(\bfe) }{\Phi( \mathbf{c}_{\bfe} ) } \leq (\lambda \delta)^{\ell(\cc) + \ell(\cc_{\bfe} ) - 1}  \ (\lambda \delta)^{1-\ell(\cc_{\bfe})}  = (\lambda\delta)^{\ell(\cc)}
\ees
from which the conclusion follows.
\end{enumerate}
\end{itemize}
 \end{enumerate}
 \end{proof}
 
 The proof of Theorem \ref{treebound} can be deduced from that of Theorem \ref{squarelatticebound} by replacing $\bbS^{x}_{\lambda}$ with the random walk defined at \eqref{e93}, Lemma \ref{lastlemma} with Lemma \ref{countingest} (which we prove in the appendix), and Lemma \ref{squarepatch} with Lemma \ref{treepatch}l. Therefore, we shall not repeat it.

\subsection*{On the feasibility of \eqref{eq8},\eqref{eq7} and \eqref{eq1},\eqref{eq2} }

In this section we address the problem of how to construct jump intensities satisfying \eqref{eq8},\eqref{eq7} (resp. \eqref{eq1},\eqref{eq2}). Lemma  \ref{faceexistence} (resp. \ref{constspeedconstrlemma}) shows that for any arbitrary assignment of positive numbers $\varphi$ on $\cE \cup \cF$ (resp. $\cC$) there exists at least an intensity $j$ satisfying Assumption \ref{as-01} and such that $\Phi_j \equiv \varphi $ on $\cE \cup \cF$ (resp. $\cC$). It is then possible to construct the desired jump intensities in two steps. W.l.o.g. we restrict to the square lattice, the procedure being identical  in the case of a general graph.
\begin{itemize}
\item[\textbf{Step 1}]  Construct a positive function $\varphi$ on $\cE \cup \cF$ such that \eqref{eq8}, \eqref{eq7} hold when replacing $\Phi_j$ with $\varphi$. It is rather easy to see that this is possible.
\item[\textbf{Step 2}] Construct $j$ such that $\Phi_j = \varphi $ on $\cE \cup \cF$. The existence of such $j$ (and a way of constructing it) are given in Lemma \ref{faceexistence}
\end{itemize}
\subsubsection*{Square lattice}
Although we are interested in the lattice case, Lemma \ref{faceexistence} is easier to prove for a general planar graph. Planar graphs have a privileged set of closed walks: the \textit{faces}, which are uniquely determined once a planar representation is fixed. We choose the representation in such a way that both arcs corresponding to an element of $\cE$ on the same segment in the planar representation \footnote{This is because we do not consider the walks of length two as faces. Faces have length at least three}.
As in the case of the square lattice, the set of clockwise oriented faces of  a planar graph is denoted $\cF$.
\begin{lemma}\label{faceexistence}
Let $(\cX,\to)$ be a planar directed graph satisfying Assumption \ref{as-03}. 
Let $\varphi: \cF \cup \cE \rightarrow \R_{+}$ be bounded from above. Then there exist at least one $j: \cA \rightarrow \R_{+}$ fulfilling Assumption \ref{as-01}
and such that 
\be\label{eq124}
\forall \, \bff \in \cF, \quad \Phi_j(\bff) = \varphi(\bff), \quad \forall \, \bfe \in \cE, \quad \Phi_j(\bfe) = \varphi(\bfe)
\ee
If $\cX$ is a finite set, then $j$ is unique. If $\cX$ is  infinite, then all intensities $k:\cA \rightarrow \R_{+}$ with such properties can be written in the form
\bes
k(\zz) = \exp( \phi(z') - \phi(z) ) j(\zz)
\ees
where $h = \exp(\phi)$ is a positive solution to:
\bes
\forall z \in \cX, \sum_{z' : \zz} j(\zz) h(z') = v \, h(z)
\ees
for some constant $v>0$.
\end{lemma}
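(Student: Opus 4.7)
The plan is to pass to logarithms, recognize the constraints as prescribing the integrals of a discrete 1-form over a generating family of cycles, and then enforce Assumption \ref{as-01} by a Doob $h$-transform, which preserves all reciprocal characteristics via telescoping. Setting $u:=\log j$ and $\psi:=\log \varphi$, the multiplicative condition \eqref{eq124} is recast as the linear system
\bes
u(\zz) + u(z' \to z) = \psi(\bfe_{z,z'}),\qquad \bfe \in \cE,
\ees
\bes
\sum_{i=0}^{n-1} u(x_i \to x_{i+1}) = \psi(\bff), \qquad \bff=(x_0\to\cdots\to x_n=x_0) \in \cF.
\ees

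My first step is to construct some $j_0: \cA \to \R_{+}$ satisfying these two families of constraints while ignoring the constant-speed condition. Fix a symmetric spanning tree $\cT$ of $(\cX,\to)$ and set $u_0\equiv 0$ on the arcs of $\cT$; the length-two constraints then determine $u_0$ on the reverse tree arcs. For any non-tree arc $\zz$ the closed walk $\cc_{\zz}$ obtained by concatenating $\zz$ with the unique simple $\cT$-path from $z'$ to $z$ belongs to the cycle space of $(\cX,\to)$. A dimension count via Euler's formula ($V-E+F=2$) gives $|\cE|+|\cF|=E+(F-1)=2E-V+1$, which matches the rank of the directed cycle space, so $\cE \cup \cF$ is a $\Z$-basis of it (independence of the faces from each other and from $\cE$ being automatic in the chosen planar representation). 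Decomposing $\cc_{\zz}$ over this basis prescribes $u_0(\zz)$ as a finite $\Z$-combination of values of $\psi$, with well-definedness an immediate consequence of the basis property. Exponentiation yields $j_0:=\exp u_0 > 0$ satisfying \eqref{eq124}.

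To upgrade $j_0$ to a constant-speed intensity I seek a positive function $h$ and a scalar $v>0$ with
\bes
\sum_{z':\,\zz} j_0(\zz)\, h(z') = v\, h(z), \qquad z \in \cX,
\ees
and define $j(\zz):=(h(z')/h(z))\, j_0(\zz)$. The telescoping of the $h$-ratios along any closed walk leaves both the face and length-two characteristics invariant, while the displayed equation is exactly the constant-speed condition for $j$. When $\cX$ is finite, Perron--Frobenius applied to the irreducible nonnegative matrix $(j_0(\zz))_{z,z'\in\cX}$ yields $h$ (unique up to positive scaling) with $v$ equal to the Perron eigenvalue, producing existence and uniqueness of $j$ simultaneously. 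In the infinite case I would construct $h$ as a limit of Perron eigenfunctions on a finite exhaustion of $\cX$, using boundedness of $\varphi$ to keep $j_0$ and the approximating eigenvalues under control.

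For the characterization statement, let $k$ be any other admissible intensity and set $r(\zz):=k(\zz)/j(\zz)$. Preservation of the length-two and face characteristics under $j\mapsto k$ forces $r(\zz) r(z'\to z)=1$ and $\prod r = 1$ around every $\bff\in\cF$; since $\cE \cup \cF$ generates the cycle space, $\log r$ is a closed 1-form on the connected graph $(\cX,\to)$, hence exact: there exists $\phi:\cX\to\R$ with $\log r(\zz)=\phi(z')-\phi(z)$, yielding the claimed $h$-transform representation. The constant-speed condition on $k$ then becomes the stated eigenvalue equation for $h:=\exp\phi$, and in the finite case Perron--Frobenius uniqueness forces $\phi$ to be constant, hence $k=j$. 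The main obstacle in this program is Step 2, namely the rigorous use of the planar cycle-space basis property to make $u_0$ well-defined on non-tree arcs independently of the chosen decomposition; a secondary difficulty is the infinite-volume case of Step 3, where one must extract a strictly positive limiting eigenfunction from the finite exhaustion while keeping the Perron eigenvalues bounded away from $0$ and $+\infty$.
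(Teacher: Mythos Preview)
Your approach is sound but differs from the paper's. For the construction of a $j$ satisfying \eqref{eq124} (your Step~1), the paper does not use a spanning tree or the cycle-space basis; instead it argues by induction on the number of arcs. At the inductive step one picks two adjacent faces $\bff_1,\bff_2$ sharing some $\bfe_0\in\cE$, deletes $\bfe_0$ to merge them into a single face $\bh$ with assigned value $\varphi(\bff_1)\varphi(\bff_2)/\varphi(\bfe_0)$, applies the inductive hypothesis to the reduced graph, and then extends $j$ to the two arcs of $\bfe_0$ by solving $\Phi_j(\bff_i)=\varphi(\bff_i)$ directly; a short computation checks $\Phi_j(\bfe_0)=\varphi(\bfe_0)$. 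This face-merging induction entirely avoids the well-definedness issue you flag as the main obstacle, at the price of being less structural: your homological argument makes transparent why $\cE\cup\cF$ generates all cycle constraints, whereas the paper's proof is elementary but hides this. For the constant-speed upgrade and the characterization of all admissible $k$ your argument coincides with the paper's, except that in the infinite case the paper does not run a finite exhaustion but cites a Perron--Frobenius type result of Pruitt for countable nonnegative matrices, applicable thanks to the bounded-degree assumption. One small slip to fix: since $\cT$ is symmetric in the paper's convention, setting $u_0\equiv 0$ on all arcs of $\cT$ would force $\psi(\bfe)=0$ for every $\bfe\subset\cT$; you should first fix an orientation of $\cT$ and set $u_0=0$ only on those arcs, letting the length-two constraints determine the reverse ones.
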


\begin{proof} 
In a first step we show the existence of a function $j:\cA \rightarrow \R_{+}$ such that \eqref{eq124} is satisfied.  The proof goes by induction on the number of arcs of $(\cX,\to)$. The base step is trivial. For the inductive step,  consider two clockwise orient  faces $\bff_1,\bff_2$ which are \textit{adjacent}. This means that there exist $\bfe_0=(x \to y \to x) \in \cE$ such that $(x \to y) \in \bff_1$ and $(y \to x) \in \bff_2$. Consider the graph $(\cX, \to_{1})$ obtained by removing  $\bfe_{0}$ from $(\cX,\to)$.  This planar graph instead of the two faces $\bff_1$ and $\bff_2$ has a single face $\bh$, which corresponds to the union of $\bff_1$ and $\bff_2$. 
On this $(\cX,\to)$ we define $\psi: \cE\setminus \bfe_0 \cup \cF \setminus \{ \bff_1,\bff_2 \} \cup \bh$ as follows:
\beas\label{e82}
\nonumber \forall \bfe \in \cE \setminus \bfe_{0} \quad \psi(\bfe) = \varphi(\bfe) \\
\forall \bff \in \cF \setminus \{ \bff_1 , \bff_2 \} \quad \psi(\bff) = \varphi(\bff) 
\eeas
\be\label{e83}
\psi(\bh) = \frac{ \varphi(\bff_1) \varphi(\bff_2)}{ \varphi( \bfe_0)}
\ee
By the inductive hypothesis there exist $j:\cA \setminus \bfe_0 \rightarrow \R_{+} $
such that \be\label{e80} \forall \quad \bfe \in \cE \setminus \bfe_0 , \quad \Phi_j(\bfe) = \psi(\bfe) \ee   and 
\be \label{e81} \forall \bff \in \cF \setminus \{ \bff_1,\bff_2\} \cup \{ \bh \}, \quad \Phi_j(\bff) = \psi(\bff).\ee 
Consider $\bff_1 =(x \to y \to x_2 \to .. \to x)$ and $\bff_2 = ( y \to x \to y_2 \to ..\to y  )$.	
We extend $j$ to $\bfe_0$ by defining:
\bea\label{e84}
\nonumber j(x \to y ) &=& \varphi(\bff_1) \big[j(y \to x_2) \prod_{i=2}^{\ell(\bff_1)-1} j(x_i \to x_{i+1})\big]^{-1} \\
j(y \to x )&=& \varphi(\bff_2) \big[j(x \to y_2) \prod_{i=2}^{\ell(\bff_2)-1} j(y_i \to y_{i+1})\big]^{-1}
\eea
We claim that $j$ as constructed here satisfies \eqref{eq124}. For $\bff \neq \bff_1,\bff_2$ and $\bfe \neq \bfe_0$, this is granted by \eqref{e80} and \eqref{e81}. Using \eqref{e84}, it is seen that $\Phi_j(\bff_1) = \varphi(\bff_1)$ and $\Phi_j(\bff_2) = \varphi(\bff_2)$. Therefore we only need to check $\bfe_0$. Using \eqref{e83}, the inductive hypothesis and what we have just proven:
\bes
\Phi_j(\bfe_0 ) = \frac{\Phi_j(\bff_1) \Phi_j(\bff_2)}{\Phi_j(\bh)} = \frac{\varphi(\bff_1) \varphi(\bff_2)}{\psi(\bh)} \stackrel{\eqref{e83}}{=} \varphi(\bfe_0)
\ees
which is the desired conclusion. This concludes the proof that an intensity $j:\cA \rightarrow \R_{+}$ satisfying \eqref{eq124} exists. To complete the proof we show that it is possible to modify $j$ in such a way that both Assumption \ref{as-01} and \eqref{eq124} are satisfied. For this purpose, we observe if $j$ is an intensity satisfying \eqref{eq124} all other intensities $k: \cA \rightarrow \R_{+}$ fulfilling \eqref{eq124} are of the form 
\bes
k(\zz) = \exp(\phi(z') - \phi(z)) j(\zz)
\ees
where $\phi : \cX \rightarrow \R$ is some potential on $\cX$. For assumption \ref{as-01} to hold, there must exist $v>0$ such that $\bar{k}(z) \equiv v$ for all $z \in \cX$. 
Let us define $h:=\exp(\phi)$. What we look for is then a pair $h,v$ such that 
\bes \forall z \in \cX, \quad \sum_{z' :\zz} j(\zz) h(z')  = v h(z), \quad h(z)>0 \, \forall \, z \in \cX \ees
Since w.l.o.g $\cX \subseteq \N$, if we define the matrix $K=(k_{m,n})_{m, \in \N}$ with $k_{m,n}:=k(m \to n)$, we can rewrite the former equation as:
\bes
K \cdot h = v h, \quad v>0, h>0
\ees
If $\cX$ is finite, the existence of a solution is ensured by the standard Perron Frobenius Theorem. The uniqueness statement is a consequence of the fact that the eigenspace of the positive eigenvalue has dimension 1. If $\cX$ is infinite and countable, we can use Corollary of Theorem 2 at page 1799 of \cite{pruitt1964eigenvalues}. We are entitled to use the Corollary because  $(\cX,\to)$ is of  bounded degree. 
\end{proof}
\subsubsection*{General graph}
\begin{lemma}\label{constspeedconstrlemma}
Let $(\cX,\to)$ be a graph fulfilling Assumption \ref{as-03}, $\mathcal{T}$ be a tree and $\cC$ be a $\cT$-basis of the closed walks. 
Let $\varphi: \cC \rightarrow \R_{+}$ be bounded from above. Then there exist $j: \cA \rightarrow \R_{+}$
such that Assumption \eqref{as-01} is satisfied
and 
\be\label{eq27}
\forall \cc \in \cC, \quad \Phi_j(\cc) = \varphi(\cc)
\ee
If $\cX$ is a finite set, then $j$ is unique. If $\cX$ is  infinite, then all other functions $k:\cA \rightarrow \R_{+}$ fulfilling Assumption \ref{as-01} and \eqref{eq27} can be written in the form
\bes
k(\zz) = \exp( \phi(z') - \phi(z) ) j(\zz) 
\ees
where $h = \exp(\phi)$ solves is a positive solution to:
\bes
\forall z \in \cX, \sum_{z' ; \zz} j(\zz) h(z') = v h(z)
\ees
for some constant $v>0$.
 \end{lemma}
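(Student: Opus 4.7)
The plan is to parallel the proof of Lemma~\ref{faceexistence}, replacing the role played there by the faces of a planar graph with the walks $\mathbf{c}_{\bfe} \in \mathcal{C}^*$ picked from the $\cT$-basis. I proceed in two steps: first construct some intensity $j:\cA \to \R_+$ realising \eqref{eq27} while ignoring the constant-speed requirement, and then apply a gauge transformation to enforce Assumption~\ref{as-01}. The key observation that makes the second step possible is that for any potential $\phi:\cX \to \R$, the modified intensity $k(\zz) := \exp(\phi(z') - \phi(z))\, j(\zz)$ satisfies $\Phi_k(\mathbf{c}) = \Phi_j(\mathbf{c})$ on every closed walk, since the potential differences telescope to zero.

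For the first step I exploit the combinatorial structure of $\mathcal{C} = \mathcal{C}^* \cup \cE$. The walks in $\cE \setminus \ces$ are exactly the closed walks of length two whose two arcs lie in $\cT$; for each such $\bfe = (z \to z' \to z)$ the prescribed value $\varphi(\bfe)$ imposes only the single scalar relation $j(\zz)\, j(z' \to z) = \varphi(\bfe)$, so I may freely fix, say, $j(\zz) = j(z' \to z) = \sqrt{\varphi(\bfe)}$. Having thus set $j$ on all tree arcs, for every $\bfe \in \ces$, say $\bfe = (z \to z' \to z)$, the walk $\mathbf{c}_{\bfe} \in \mathcal{C}^*$ decomposes (up to the choice made in Definition~\ref{defs-01}(d)) as $(\zz)$ concatenated with the unique simple walk $\bw_{z' \to z}$ from $z'$ to $z$ inside $\cT$, and the quantity $P_{\bfe} := \prod_{(u \to u') \in \bw_{z' \to z}} j(u \to u')$ is already a fixed positive number. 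I then set $j(\zz) := \varphi(\mathbf{c}_{\bfe})/P_{\bfe}$ and $j(z' \to z) := \varphi(\bfe)/j(\zz)$, which by construction gives simultaneously $\Phi_j(\bfe) = \varphi(\bfe)$ and $\Phi_j(\mathbf{c}_{\bfe}) = \varphi(\mathbf{c}_{\bfe})$. Every arc of $\cA$ has thereby been assigned a positive value, and $j$ satisfies \eqref{eq27}.

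For the second step I look for $h = \exp(\phi) > 0$ and $v > 0$ solving the eigenvalue equation $\sum_{z':\, \zz} j(\zz)\, h(z') = v\, h(z)$; with such a pair the gauged intensity $k(\zz) = \exp(\phi(z') - \phi(z))\, j(\zz)$ has constant total outgoing rate $v$ and, by the telescoping observation above, still realises \eqref{eq27}. This is exactly the problem solved at the end of the proof of Lemma~\ref{faceexistence}: for finite $\cX$ the classical Perron--Frobenius theorem gives a unique positive eigenpair (hence uniqueness of $j$), while for countably infinite $\cX$ the Corollary of Theorem~2 in \cite{pruitt1964eigenvalues} applies thanks to the bounded-degree clause of Assumption~\ref{as-03} and the upper bound on $\varphi$, and the family of positive eigenvectors furnishes exactly the parametrization $k(\zz) = \exp(\phi(z') - \phi(z))\, j(\zz)$ advertised in the statement. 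The only delicate point is the infinite-dimensional eigenvalue problem of Step~2, but since its resolution is already spelled out in Lemma~\ref{faceexistence}, the genuine new content lies in the combinatorial construction of Step~1.
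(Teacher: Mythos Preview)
Your proof is correct and follows essentially the same approach as the paper: first assign $j$ on tree arcs so that $\Phi_j(\bfe)=\varphi(\bfe)$ for $\bfe\in\cE\setminus\ces$, then on each non-tree pair use the already-computed tree product to fix $j(\zz)=\varphi(\mathbf{c}_{\bfe})/P_{\bfe}$ and $j(z'\to z)=\varphi(\bfe)/j(\zz)$, and finally defer to the Perron--Frobenius / Pruitt argument of Lemma~\ref{faceexistence} for the gauge step. The only cosmetic difference is that you pick the symmetric choice $\sqrt{\varphi(\bfe)}$ on tree arcs where the paper allows an arbitrary positive value; this changes nothing.
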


Here is the proof of Lemma \ref{constspeedconstrlemma}.

\begin{proof}
We only show that we can construct $j:\cA \rightarrow \R_{+}$ such that \eqref{eq27} is satisfied. The proof that $j$ can be turned into an intensity $k$ satisfying Assumption \ref{as-01} can be done following  Lemma \ref{faceexistence} with almost no change. For any $\bfe=(x\to y \to x) \in \cE \setminus \cE^*$ (i.e. $\bfe \subseteq \cT$), we choose exactly one among $(x \to y )$ and $(y \to x)$ and set the value of $j(x \to y)$ to an arbitrary positive value. Then we set $j(y \to x) = \frac{\varphi(\bfe)}{j(x \to y)}$.
Next, for any $\bfe \in \cE^*$ we let $x \to y$ be the arc of $\bfe$ such that $\cc_{x \to y} = \cc_{\bfe}$. We observe that $\cc_{x \to y}$ can be written as $( x \to y \to \bp_{y \to x})$ for some simple walk $\bp_{y \to x}$ from $y$ to $x$ whose arcs are in $\cT$. The value of $j$ has been already set on $\bp_{y \to x}$: therefore we can then set $ j(x \to y)$ as $\frac{\varphi(\cc_{\bfe})}{ \Phi_j(\bp_{y \to x})}$. Finally we set $j$ on $y\to x$ by  $j(y \to x ):= \varphi(\bfe)/j(x \to y)$. It is then easy to check that the intensity $j$ so constructed satisfies \eqref{eq27}. 
\end{proof}

\begin{center}
{$\mathbf{Acknoledgments}$}
\end{center}
The author wishes to thank  Paolo dai Pra  and Sylvie Roelly for having introduced him to the subject, and for giving several advises during the preparation of the manuscript.  Many thanks to Christian L\'eonard, Cyril Roberto and  Max Von Renesse for insightful discussions.

\appendix
 \section{ }\label{sec:A}
 The appendix is organized as follows: we first recall the main tools used in the proof of Theorem \ref{accordeon}. Then we prove the two Lemmas \ref{p461} and \ref{ll}, which are needed in the proof of Theorem \ref{t70}. Finally, we prove Lemma \ref{lastlemma}, which is part of the proof of Theorem \ref{squarelatticebound}.
\subsubsection*{About Theorem \ref{accordeon}.}
We recall two of the main ingredients used in the proof. The first one is the integration by parts (duality) formula proved in \cite[Th.4.1]{RT05} to characterize bridges of Brownian diffusions. Here, we report a slightly simplified version of the formula, which still suffices for the scopes this paper.
\begin{theorem}[Integration by parts formula]\label{IBPF}
Let $\bbP^x$ be law of
\bes
d X_t = - \nabla U (t,X_t)dt + dB_t, \quad X_0 =x
\ees
Let $\bbQ$ be a probability measure on $C([0,1],\RD)$ satisfying the regularity hypothesis (A0),(H1),(H2) of Theorem 4.1 in \cite{RT05}. Then $\bbQ$ is the bridge $\bbP^{xy}$ if and only if $\bbQ((X_0,X_1)=(x,y))=1$ and the formula
\begin{equation*}
\mathbb{E}_{\mathbb{Q}} \Big( \mathcal{D}_{h} F \Big) =\mathbb{E}_\mathbb{Q} \left( F \int_{0}^{1} \dot{h}(t) \cdot d X_t \right) + \mathbb{E}_\mathbb{Q} \left( F \int_{0}^1 \nabla \scrU (t,X_t) \cdot h(t) dt \right)
\end{equation*}
holds for any simple functional $F$, and any direction of differentiation $h$ which is continuous, piecewise linear and satisfies the loop condition
\begin{equation*}
\quad  h(1)=h(0)=0
\end{equation*}
\end{theorem}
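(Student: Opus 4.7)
The theorem is an ``if and only if'' characterization of the bridge measure via a duality formula, so my plan is to split the argument along the two implications, following in spirit the strategy of Roelly and Thieullen in \cite{RT05}.

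For the forward direction, I would prove that the bridge $\bbP^{xy}$ itself satisfies the IBP formula by reduction to the Brownian bridge $\bbW^{xy}$. First I would write $d\bbP^{xy}/d\bbW^{xy} = M$ with $M = Z^{-1}\exp(-\int_0^1 \scrU(t,X_t)\,dt)$; this is exactly the density derived in the proof of Theorem \ref{accordeon}, and its existence is ensured because $\bbP$ admits a smooth transition density and $\scrU$ is smooth. The classical Bismut-style integration by parts for the Brownian bridge is well known and available for any simple functional $F$ and any continuous piecewise-linear $h$ with $h(0)=h(1)=0$, since such loop directions are precisely those that preserve the pinning. Applying it to the functional $F\cdot M$ yields
\bes
\bbE_{\bbW^{xy}}\bigl(\mathcal{D}_h(FM)\bigr) = \bbE_{\bbW^{xy}}\Bigl(FM \int_0^1 \dot h(t)\cdot dX_t\Bigr).
\ees
Expanding the left-hand side by the Leibniz rule as $\bbE_{\bbW^{xy}}(M\,\mathcal{D}_h F) + \bbE_{\bbW^{xy}}(F\,\mathcal{D}_h M)$ and substituting $\mathcal{D}_h M = -M\int_0^1 \nabla\scrU(t,X_t)\cdot h(t)\,dt$, which was computed in \eqref{eq:frechetdensity}, produces the $\nabla \scrU$ term on the correct side. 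Rewriting the resulting identity under $\bbP^{xy}$ by absorbing $M$ into the measure gives the claimed formula.

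For the converse direction, I would argue by uniqueness of the finite-dimensional distributions. Suppose $\bbQ$ satisfies both the IBP identity and $\bbQ((X_0,X_1)=(x,y))=1$. Applying the formula to functionals $F$ depending only on a finite time grid $0<t_1<\cdots<t_n<1$ and to piecewise-linear test directions $h$ supported in a single interval $(t_i,t_{i+1})$ converts the duality identity into a system of equations for the conditional transition densities of $\bbQ$ given $(X_{t_i},X_{t_{i+1}})$. Together with the Kolmogorov PDE for the Brownian drift and the endpoint condition, this system is rigid enough to pin down each multi-time density, forcing $\bbQ_I = \bbP^{xy}_I$ for every finite $I\subseteq [0,1]$, and hence $\bbQ=\bbP^{xy}$ by Kolmogorov's extension theorem.

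The main obstacle is the converse. Promoting a duality identity into unique identification of all finite-dimensional marginals requires a sufficiently rich class of test directions $h$ (hence the technical conditions (A0), (H1), (H2) imported from \cite{RT05}), and one must also verify that the stochastic integrals $\int_0^1 \dot h(t)\cdot dX_t$ are well-defined and behave correctly under $\bbQ$ \emph{before} one has identified $\bbQ$ with $\bbP^{xy}$; handling these integrability and approximation issues rigorously is the genuinely delicate part of the argument, and for this reason I would lean on the full apparatus of \cite[Th.~4.1]{RT05} rather than reproduce it here.
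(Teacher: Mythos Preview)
The paper does not prove this theorem: it is stated in the Appendix as a tool imported from \cite[Th.~4.1]{RT05}, with no proof given. So there is no ``paper's own proof'' to compare against.

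That said, your forward direction contains a genuine circularity. You invoke the density formula $d\bbP^{xy}/d\bbW^{xy} = Z^{-1}\exp\bigl(-\int_0^1 \scrU(t,X_t)\,dt\bigr)$ and justify it as ``exactly the density derived in the proof of Theorem~\ref{accordeon}''. But in the paper, that density formula (equation \eqref{e55}) is established precisely \emph{by applying Theorem~\ref{IBPF}}: one defines $\bbQ := M\,\bbW^{xy}$, verifies that $\bbQ$ satisfies the duality identity with the $\nabla\scrU$ term, and then invokes the uniqueness direction of Theorem~\ref{IBPF} to conclude $\bbQ = \bbP^{xy}$. So you are using a consequence of the theorem to prove the theorem. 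To break the circle you would need an independent derivation of the density (for instance via Girsanov for $\bbP^x$ against Wiener measure followed by conditioning on $X_1=y$, which is doable but requires care with the singular conditioning), or you would have to run the forward direction directly from the $h$-transform representation of the bridge drift without ever appealing to \eqref{e55}.

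Your converse direction is honest in simply deferring to \cite{RT05}; since that is where the result actually lives, and since the paper itself does the same, this is the appropriate stance.
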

Let us recall that by a simple functional we mean a functional that can be written in the form $\varphi(X_{t_1},..,X_{t_k})$ for some $\mathcal{C}^{\infty}_{b}(\R^{d \times k})$ function $\varphi$ and finitely many $t_1,..,t_k$. The directional Fr\'{e}chet derivative $\mathcal{D}_h F$ of the simple functional $F$ is defined as usual:
\beas
\mathcal{D}_h F &= &\lim_{\varepsilon \rightarrow 0 }  \frac{ \varphi(X_{t_1} + \varepsilon h(t_1),..,X_{t_k}+ \varepsilon h(t_k) ) -\varphi(X_{t_1} ,..,X_{t_k})}{\varepsilon}\\
&=&  \sum_{j=1}^{k} \sum_{i=1}^d \partial_{x^j_i} \varphi(X_{t_1},..,X_{t_k}) h_i (t_j) 
\eeas 
The second is a Theorem proved in\cite{BrLieb76} that gives a quantitative version of the statement that marginalization preserves log concavity. We follow the presentation of \cite{Simon2011}.
\begin{theorem}[Preservation of strong log concavity]\label{thm:logconcpres}
Let $F: \R^{m+n} \rightarrow \R_{+}$ be log concave and let $\Sigma(\cdot)$ be a positive quadratic form on $\R^{m+n}$ . Write $w=(v,v')$, with $z \in \R^{m+n}$, $v \in \R^m$, $v' \in \R^n$. Let $F(w)$ be jointly log concave on $\R^{m+n}$ and define on $\R^n$,
\be\label{eq50}
G(v') = \frac{\int_{\R^m} F(w) \exp(-  \Sigma (w)  ) dv}{\int_{\R^m} \exp(- \Sigma( w ) ) dv }
\ee
Then $v' \mapsto G(v')$ is log concave.
\end{theorem}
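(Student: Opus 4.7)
My plan is to reduce the statement to the Prékopa--Leindler inequality by exploiting the quadratic structure of $\Sigma$, using a Gaussian change of variables that makes the reference measure independent of $v'$. First I would write $\Sigma(v,v') = \tfrac{1}{2}(v,v')^T M (v,v')$ with block decomposition $M = \begin{pmatrix} A & B \\ B^T & C \end{pmatrix}$; the convergence of $\int \exp(-\Sigma(v,v'))\, dv$ forces $A \in \R^{m \times m}$ to be positive definite. Completing the square in $v$ yields
\bes
\Sigma(v,v') \;=\; \tfrac{1}{2}(v + A^{-1}Bv')^T A (v + A^{-1}Bv') \;+\; \tfrac{1}{2} v'^T \bigl(C - B^T A^{-1} B\bigr) v'.
\ees
The second, Schur-complement, term depends only on $v'$ and appears as a common factor in the numerator and denominator of $G(v')$.

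After this cancellation and the linear substitution $u = v + A^{-1}B v'$ in the remaining numerator integral, I obtain
\bes
G(v') \;=\; \frac{1}{Z}\int_{\RM} F\bigl(u - A^{-1}B v',\, v'\bigr)\,\exp\bigl(-\tfrac{1}{2} u^T A u\bigr)\, du,
\ees
where $Z := \int_{\RM} \exp(-\tfrac{1}{2} u^T A u)\, du$ is a constant independent of $v'$.

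Next I would check joint log concavity of the integrand on $\RMN$. The function $\tilde F(u,v') := F(u - A^{-1}B v',\, v')$ is log concave because it is the composition of the log concave $F$ with a linear automorphism of $\RMN$, and the Gaussian weight $(u,v') \mapsto \exp(-\tfrac{1}{2} u^T A u)$ is log concave on $\RMN$ (its logarithm is concave in $u$ and constant in $v'$). Their product is then log concave on $\RMN$, and Prékopa--Leindler applied to the $u$-marginalization shows that $v' \mapsto Z\, G(v')$ is log concave.

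The main obstacle --- really the only nontrivial point, everything else being bookkeeping --- is spotting that the $v'$-dependent Schur-complement factor produced by completing the square must cancel exactly between numerator and denominator. Without this cancellation the denominator would contribute a Gaussian in $v'$ whose reciprocal is log convex, and a naive estimate of the ratio would fail to give the conclusion; it is precisely the ratio structure of $G$ (i.e.\ the fact that the denominator is the normalization of $\exp(-\Sigma(\cdot,v'))$ as a density in $v$), combined with the quadratic nature of $\Sigma$, that rescues the argument and places us in the range of Prékopa--Leindler.
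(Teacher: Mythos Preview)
Your argument is correct. Completing the square in the quadratic form, observing that the Schur-complement factor cancels between numerator and denominator, and then applying Pr\'ekopa's theorem (a consequence of Pr\'ekopa--Leindler) to the resulting integral is a clean and valid route to the conclusion.

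Note that the paper does not actually prove this theorem: it simply cites \cite[Th.~4.3]{BrLieb76} and \cite[Th.~13.3]{Simon2011} in the appendix and treats the result as a black box. So there is no ``paper's proof'' to compare against; you have supplied more than the authors do. Your approach is essentially the standard modern reduction, and the only subtlety --- the cancellation of the $v'$-dependent Schur-complement term --- you have identified and handled explicitly.
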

For the proof we refer to \cite[Theroem 13.3, pag.204]{Simon2011} or \cite[Theorem 4.3]{BrLieb76}. 
\subsubsection*{Proof of Lemma \ref{p461}}

\begin{lemma}\label{p461}
Let $h$ be defined by \eqref{e321} and $\psi$ be as in \eqref{e326}
Then 
$$ \forall \tau >0  , \quad  \psi_{\tau} \leq h_{\tau} $$
\end{lemma}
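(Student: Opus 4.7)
My plan is to view the inequality $\psi_\tau \leq h_\tau$ as a Gronwall-type comparison between the ODI \eqref{e326} satisfied by $\psi$ and the ODE \eqref{e321} satisfied by $h$. I would set $g_\tau := \psi_\tau - h_\tau$ and subtract \eqref{e321} from \eqref{e326} to obtain the homogeneous inequality
\begin{equation*}
\tau\, \partial_\tau g_\tau - g_\tau \leq 0 \qquad \text{on } (0,+\infty),
\end{equation*}
together with the matching boundary data $g_0 = 0$ and $\partial_\tau g_0 = 0$ (since both $\partial_\tau \psi_0$ and $\partial_\tau h_0$ coincide with $\bbE_{\mu_\lambda}(f)$ by construction).

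The key step is to rewrite this homogeneous ODI, on $(0,+\infty)$, as
\begin{equation*}
\partial_\tau\!\left(\frac{g_\tau}{\tau}\right) = \frac{\tau\,\partial_\tau g_\tau - g_\tau}{\tau^2} \leq 0,
\end{equation*}
so that $\tau \mapsto g_\tau/\tau$ is non-increasing on $(0,+\infty)$. Since $f$ is $1$-Lipschitz, it has exponential moments of all orders, hence $\psi$ is smooth on $[0,+\infty)$ and $g$ is $C^1$ near $0$ with $g_0 = 0$ and $\partial_\tau g_0 = 0$; this forces $\lim_{\tau \downarrow 0} g_\tau/\tau = 0$. Combining this limit with the monotonicity gives $g_\tau/\tau \leq 0$ for every $\tau > 0$, and hence $\psi_\tau \leq h_\tau$, which is the claim.

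The main obstacle I anticipate is the degeneracy of the leading coefficient at $\tau = 0$: a naive Gronwall or maximum-principle argument does not apply directly because the factor $\tau$ in front of $\partial_\tau g$ vanishes there. The ``dividing by $\tau^2$'' trick sidesteps this precisely because the matching of the first-order Taylor coefficients at $0$ (both $\psi$ and $h$ are engineered to have slope $\bbE_{\mu_\lambda}(f)$ at the origin) makes $g_\tau/\tau$ extend continuously to $0$ with limit $0$. This matching is essential: without it one would only get $\lim_{\tau \downarrow 0} g_\tau/\tau = \partial_\tau g_0$, which could be strictly positive and would break the monotonicity argument.
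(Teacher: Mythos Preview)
Your argument is correct and, in fact, slightly more streamlined than the paper's. You exploit the integrating-factor identity $\partial_\tau(g_\tau/\tau)=(\tau\,\partial_\tau g_\tau-g_\tau)/\tau^2$ directly, so that monotonicity of $g_\tau/\tau$ together with the matching of first-order data at $\tau=0$ immediately gives the conclusion. The paper instead runs an $\varepsilon$-perturbation: it replaces $h$ by the solution $h^\varepsilon$ of \eqref{e321} with initial slope $\bbE_{\mu_\lambda}(f)+\varepsilon$, sets $\eta^\varepsilon:=\psi-h^\varepsilon$, notes that $\partial_\tau\eta^\varepsilon_0=-\varepsilon<0$, and argues by looking at the first time $T$ where $\partial_\tau\eta^\varepsilon$ vanishes (deriving a contradiction from $T\,\partial_\tau\eta^\varepsilon_T-\eta^\varepsilon_T\le 0$); finally it lets $\varepsilon\downarrow 0$. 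Both routes handle the degeneracy of the leading coefficient at $\tau=0$ in the same spirit, but your quotient trick bypasses the perturbation and the hitting-time argument entirely, at the (minor) price of invoking differentiability of $\psi$ at $0$ to justify $\lim_{\tau\downarrow 0}g_\tau/\tau=0$---a point you address adequately via the existence of exponential moments.
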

\begin{proof}
Consider $\varepsilon>0$ and define $h^{\varepsilon}_{\tau}$ as the unique solution of 
\begin{equation}
{\tau} \partial_{\tau} h^{\varepsilon}_{\tau}- h^{\varepsilon}_{\tau} = {\tau} (\exp({\tau})-1), \quad  \partial_{\tau} h^{\varepsilon}_0 = \rho(f) + \varepsilon
\end{equation}

Then $\eta^{\varepsilon}_0:=\psi_0 - h^{\varepsilon}_0=0$ satisfies: 
\bes 
\tau \partial_{\tau}\eta^{\varepsilon}_{\tau} - \eta^{\varepsilon}_{\tau} \leq 0, \quad \partial_{\tau} \eta^{\varepsilon}_0 =- \varepsilon 
\ees
 Since $\eta^{\varepsilon}$ is continuously differentiable, we have that $T>0$, where $T$ is defined as 
\begin{equation}
T:= \inf \{\tau >0 : \partial_{\tau} \eta^{\varepsilon}_{\tau}=0\}
\end{equation}
Assume that $T<+\infty$. Then, at $T$, we have:
\be T \underbrace{\partial_{\tau} \eta^{\varepsilon}_T}_{=0} - \eta^{\varepsilon}_T \leq 0 \Rightarrow  \eta^{\varepsilon}_T \geq 0 \ee
But this is impossible since $\eta^{\varepsilon}_0=0, \partial_{\tau} \eta^{\varepsilon}_{\tau}<0$ for all $\tau < T$. Therefore $\partial_{\tau} \eta^{\varepsilon}_{\tau} <0 $ for all $\tau>0$. Since $\eta^{\varepsilon}_0=0$, we also have that $\psi^{\varepsilon}_{\tau}<0$ for all $\tau>0$. Therefore, as the choice of $\varepsilon$ was arbitrary:
\bes 
\forall \tau>0, \quad \psi_{\tau} \leq \inf_{\varepsilon >0} h^{\varepsilon}_{\tau}= h_{\tau}
\ees
\end{proof}

\subsubsection*{Proof of Lemma \ref{ll}}
\begin{lemma}\label{ll}
\bes
  \bbE_{ \pi_{\Phi} } \left( g \right)- (\Phi + \frac{1}{k+1}\Phi^{1/(k+1)}) \leq \bbE_{\rho_{\Phi}} \left( f  \right) 
\ees
\end{lemma}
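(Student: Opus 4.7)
The plan is to reduce the bound to moment estimates by exploiting Lipschitz properties of $f$ and $g$. I would first record two structural facts: $g(0) = f(0)$ (since $\alpha(0) = 0 = n(0)$), and $g$ is $\tfrac{1}{k+1}$-Lipschitz on $\N$; for the latter, every unit increment of $m$ changes $\alpha$ by $\tfrac{1}{k+1}$ (possibly accompanied by a unit increase of $n$), so $|g(m+1) - g(m)| = |f(n(m)+1) - f(n(m))|/(k+1) \leq 1/(k+1)$ using 1-Lipschitzness of $f$. Combining the one-sided estimates $g(m) - f(0) \leq m/(k+1)$ and $f(0) - f(n) \leq n$ gives
\[
\bbE_{\pi_\Phi}(g) - \bbE_{\rho_\Phi}(f) \;\leq\; \frac{\bbE_{\pi_\Phi}(m)}{k+1} + \bbE_{\rho_\Phi}(n),
\]
reducing the lemma to a pair of moment bounds.

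For $\bbE_{\rho_\Phi}(n)$, I would apply the reciprocal identity \eqref{eq22} with the constant test function $\equiv 1$: $\Phi = \bbE_{\rho_\Phi}[n\prod_{i=0}^{k-1}(kn-i)]$. The integrand vanishes at $n = 0$, and for $n \geq 1$ each factor $kn - i$ is at least $1$; hence it dominates $n$ pointwise, giving $\bbE_{\rho_\Phi}(n) \leq \Phi$.

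For $\bbE_{\pi_\Phi}(m)/(k+1)$, I would derive a Chen-type identity for $\pi_\Phi$. A direct computation from \eqref{e330} combined with the ratio formula \eqref{e120} gives, for every $m \geq 0$,
\[
\frac{\pi_\Phi(m+1)}{\pi_\Phi(m)} \;=\; \frac{\Phi^{1/(k+1)}}{\lambda_{n(m)}}, \qquad \lambda_n \;:=\; \Bigl[(n+1)\prod_{i=0}^{k-1}\bigl(k(n+1)-i\bigr)\Bigr]^{1/(k+1)}.
\]
Telescoping against the constant function produces the identity $\Phi^{1/(k+1)} = \bbE_{\pi_\Phi}[\lambda_{n(m-1)}\,\IND_{m \geq 1}]$. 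The elementary lower bound $k(n+1) - i \geq n+1$ (for $n \geq 0$, $k \geq 1$, $0 \leq i \leq k-1$) implies $\lambda_n \geq n + 1$; combined with $n(m-1) \geq (m-k-1)/(k+1)$ for $m \geq 1$, this gives $\lambda_{n(m-1)} \geq m/(k+1)$. Taking expectations produces $\bbE_{\pi_\Phi}(m)/(k+1) \leq \Phi^{1/(k+1)}$, and combining with the bound on $\bbE_{\rho_\Phi}(n)$ gives an estimate of the form claimed.

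The main obstacle is matching the precise subleading constant $\tfrac{1}{k+1}$ in the lemma: the separate Lipschitz approach above yields $\Phi^{1/(k+1)}$ rather than $\Phi^{1/(k+1)}/(k+1)$, and in fact $\bbE_{\pi_\Phi}(m)/(k+1)$ need not be bounded by $\Phi^{1/(k+1)}/(k+1)$ (it typically exceeds it for small $\Phi$). Recovering the sharper prefactor requires combining the two error contributions before estimating them — for instance by working directly with the $W_1$ distance between $\rho_\Phi$ and the pushforward of $\pi_\Phi$ under $m \mapsto n(m) + \text{Bernoulli}(\alpha(m))$, which exploits the fact that $\bbE_{\pi_\Phi}(m)/(k+1)$ and $\bbE_{\rho_\Phi}(n)$ contribute with opposite signs to $\bbE_{\pi_\Phi}(g) - \bbE_{\rho_\Phi}(f)$ once the identity $g(m) = f(n(m)) + \alpha(m)(f(n(m)+1)-f(n(m)))$ is plugged in.
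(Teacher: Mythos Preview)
Your approach is essentially the same as the paper's: both bound $\bbE_{\rho_\Phi}(f)$ from below and $\bbE_{\pi_\Phi}(g)$ from above separately, using Lipschitzness together with a ratio estimate for the respective measures. Your argument is correct and yields the constant $\Phi + \Phi^{1/(k+1)}$; your derivation of the Chen-type identity for $\pi_\Phi$ and the lower bound $\lambda_{n(m-1)} \geq m/(k+1)$ are both valid.

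Your diagnosis in the last paragraph is also accurate, and in fact applies to the paper's own proof as well. The paper asserts the ratio bound $\pi_\Phi(m) \leq \frac{\Phi^{1/(k+1)}}{m}\,\pi_\Phi(m-1)$, which (after dividing out the $\Phi$-dependence) amounts to $c_1(m) \geq m$. But $c_1(m) = h(\lceil m/(k+1)\rceil)^{1/(k+1)}$, and for $k=1$, $m=2$ this gives $c_1(2) = h(1)^{1/2} = 1 < 2$; your computation that $\bbE_{\pi_\Phi}(m)/(k+1)$ can exceed $\Phi^{1/(k+1)}/(k+1)$ confirms the same point. So the paper does not actually obtain the sharper prefactor $\tfrac{1}{k+1}$ either. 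This is harmless for the application: the constant $M$ enters the proof of Theorem~\ref{t70} only through the shift $(R-M)\log(R-M) - R\log R$, which is $o(R)$ regardless of the value of $M$. Your bound $\Phi + \Phi^{1/(k+1)}$ is therefore sufficient, and the speculative last paragraph about recovering the exact stated constant can be dropped.
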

\begin{proof}
By construction of $g$, see \eqref{e380} we can w.l.o.g assume that. $f(0)=g(0)=0$.  By \eqref{e120}, we have that $\rho_{\Phi}(n) \leq  \frac{\Phi}{n} \rho_{\Phi}(n-1)$ \footnote{Actually, the quotient $\rho_{\Phi}(n)/\rho_{\Phi}(n-1)$ is of the order $1/n^{k+1}$. However, here it suffices to consider $1/n$}. Therefore, using the 1-Lipschitzianity of $f$ and $f(0)=0$:
\bes
\bbE_{\rho_{\Phi}}(f) \geq - \sum_{n=1}^{+\infty} n \rho_{\Phi}(n) \geq -\Phi \sum_{n=1}^{+\infty}  \rho_{\Phi}(n-1) \geq - \Phi.
\ees
By construction, $g$ is $1/(k+1)$ Lipschitz, and w.l.o.g. $g(0)=0$. Moreover, it is easy to see from the definition of $\pi_{\Phi}$ given at \eqref{e330} that we have:
$\pi_{\Phi}(n) \leq \frac{ \Phi^{1/k+1}}{n } \pi_{\Phi}(n-1) $. Using all this:
\bes
\bbE_{\pi_{\Phi}}(g) \leq \frac{1}{k+1} \sum_{n=1}^{+ \infty} n \pi_{\Phi}(n) \leq  \frac{\Phi^{1/k+1} } {k+1} \sum_{n=1}^{+ \infty} \pi_{\Phi}(n-1) \leq  \frac{ \Phi^{1/k+1} }{k+1}
 \ees
 The proof is complete.
\end{proof}
\subsubsection*{Lemmas \ref{countingest} and \ref{lastlemma} }
\begin{lemma}\label{countingest}
Let $(\cX,\to)$ be a graph satisfying Assumption \ref{as-03} and let ${}\SRW^x_{\lambda}$ be the simple random walk defined at \eqref{e93}. Then
\bes
\log {}\SRW^{xx}_{\lambda}(d(X_t,x) \geq R) \leq  -2R\log R + R [2 + 2 \log (\lambda t(1-t) ) + 3 \log(\delta  - 1) ] + o(R) 
\ees
\end{lemma}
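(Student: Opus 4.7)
The plan is to combine the bridge-marginal representation with Poisson tail estimates and walk-counting in a graph of bounded degree. Writing $p^x_s(\cdot) := \SRW^x_{\lambda}(X_s = \cdot)$, I would start from
\bes
\SRW^{xx}_{\lambda}(d(X_t,x) \geq R) = \frac{1}{p^x_1(x)}\sum_{y:\, d(y,x)\geq R} p^x_t(y)\, p^y_{1-t}(x),
\ees
and lower-bound the denominator by $e^{-\lambda}$ (the zero-jump event), which contributes only a harmless $O(1)$ additive constant to the log.

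For the numerator, I would expand each marginal using the Poissonian structure of the CSRW. Since $\SRW^x_\lambda$ has jumps at rate $\lambda$ with uniform neighbor choice, one has $p^x_s(y) = e^{-\lambda s}\sum_{n \geq d(x,y)} \tfrac{(\lambda s)^n}{n!}\, P^n(x,y)$, where $P$ is the embedded uniform-neighbor transition kernel. Setting $\Delta := 1/\delta$ for the maximum out-degree, the numerator reads
\bes
e^{-\lambda}\sum_{n,m \geq R}\frac{(\lambda t)^n(\lambda(1-t))^m}{n!\, m!}\sum_{y:\, d(y,x)\geq R} P^n(x,y)P^m(y,x).
\ees

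The combinatorial heart of the argument is the upper bound on the inner sum. Three degree-bounded counts enter: the number of vertices at distance exactly $r$ from $x$ is at most $\Delta(\Delta-1)^{r-1}$; the number of length-$n$ walks from $x$ to a fixed $y$ at distance $r$ is controlled by $(\Delta-1)^{n-1}$ via a non-backtracking-type estimate; and each walk contributes at most $1$ to $P^n$. The symmetric bound for the return portion of length $m$, combined with the endpoint count, gives that the inner sum at distance exactly $r$ is of order $(\Delta-1)^{n+m+r}$ up to constants, which accounts precisely for the three appearances of $\log(\Delta-1)$ in the final estimate.

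To finish, I would substitute this bound into the double sum, notice that the summand is maximized to leading order at $n = m = r = R$, and apply Stirling's formula $\log k! = k\log k - k + O(\log k)$ to both $n!$ and $m!$. The result is
\bes
\log \SRW^{xx}_\lambda(d(X_t,x) \geq R) \leq -2R\log R + R\bigl[2 + 2\log(\lambda t(1-t)) + 3\log(\Delta-1)\bigr] + o(R),
\ees
as claimed, with all subdominant $r > R$ terms and the prefactor constants absorbed in $o(R)$. The main obstacle is the walk-counting step: the crude bound ``total walks from $x$ of length $n$ $\leq \Delta^n$'' loses the sharp exponent, and the careful separation into outgoing/endpoint/return contributions (each contributing one power of $\Delta-1$ per unit of the relevant length) is what recovers the cube.
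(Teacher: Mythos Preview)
Your approach is essentially the same as the paper's: bridge-marginal decomposition, Poisson expansion of the CSRW transition function, degree-based walk counting to produce the three factors of $(\Delta-1)$ (two from the outgoing and return walks, one from the endpoint count), and Stirling to extract $-2R\log R + 2R$. The only organisational difference is that the paper first isolates a bound on a single marginal, $\SRW^x_\lambda(X_t=y)\le c_1\,(\lambda t)^{d(x,y)}(\delta-1)^{d(x,y)}/d(x,y)!$, by showing the Poisson sum is dominated by its first term $k=d(x,y)$, and only then multiplies the two legs and sums over $y$, whereas you keep the full double sum in $(n,m)$ and argue the maximum sits at $n=m=r=R$; both routes collapse to the same estimate after Stirling.
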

\begin{proof}
Let $x,y \in \cX$. We first show that for some $c_1>0$: 
\be\label{eq203}
\SRW^{x}_{\lambda}(X_t = y) \leq c_1 \frac{1}{d(x,y)!} (\delta-1)^{d(x,y)}
\ee
To this aim we define $\mathbf{W}_{k}$ as the set of walks of length $k$ which begin at $x$ and end at $y$.
We have, by conditioning on the total number of jumps up to time $t$:
\bes
{}\SRW^{x}_{\lambda}(X_t = y) = \exp(-\lambda t) \sum_{k= d(x,y)}^{+\infty} \frac{(\lambda t)^k }{k!} \sum_{\bw \in \mathbf{W}_{k} } \lambda^{-k} \Phi_j(\bw).
\ees
It is rather easy to see that $ \lambda^{-k} \Phi_j(\cc) \leq 1$. Moreover, the cardinal of $\mathbf{W}_{k} $ can be bounded above by $\delta (\delta-1)^{k-2}$. Using these two observations:
\be\label{eq200}
\SRW^{x}_{\lambda}(X_t = y) \leq  \exp(-\lambda t)\sum_{k= d(x,y)}^{+\infty}  \frac{(\lambda t)^k }{k!} \delta (\delta-1)^{k-2} 
\ee
A standard argument based on Stirling's formula shows that the sum appearing in \eqref{eq200} can be controlled with its first summand, i.e. there exist a constant $c_1$ independent from $d(x,y)$ such that: 
\be\label{eq201}
\SRW^{x}_{\lambda}(X_t = y)  \leq c_1 \frac{(\lambda t)^{d(x,y)} }{d(x,y)!} \delta (\delta-1)^{d(x,y)-2} 
\ee
which proves \eqref{eq203}. Since there cannot be more than  $(\delta-1)^{R}$ vertices at distance $R$ from $x$, we get that, using twice \eqref{eq201}:
\beas
 \SRW^{xx}(d(X_t,x) = R)=  \frac{1}{\SRW^x_{\lambda}(X_1=x)} \sum_{y: d(x,y)=R}  {}\SRW^{x}_{\lambda}(X_t = y)  {}\SRW^{y}_{\lambda}(X_{1-t} = x )\\
 \leq c_2 \frac{(\lambda^2 t(1-t))^{R} }{R!^2} (\delta-1)^{3R} 
\eeas
for some $c_2>0$. Therefore:
\bes
 \SRW^{xx}(d(X_t,x) \geq R) \leq c_2 \sum_{k=R}^{+\infty}  \frac{(\lambda^2 t(1-t))^{k} }{k!^2}  (\delta-1)^{3k}
 \ees
Using again a standard argument with Stirling formula as we did in \eqref{eq200}, we obtain:
\bes
 \SRW^{xx}(d(X_t,x) \geq R) \leq c_3  \frac{(\lambda^2 t(1-t))^{R} }{R!^2}  (\delta-1)^{3R}
\ees
for some $c_3>0$. The conclusion follows from Stirling's formula, which allows to write $\log R! = R \log R - R + o(R) $.
\end{proof}
\begin{lemma}\label{lastlemma}
Let $\bbS^{x}_{\lambda}$ be the constant speed random walk on the square lattice defined by:
$$ j(x \to x+ v_1) = j(x \to x+ v_2) \equiv \lambda $$
Then:
\be\label{e105}
\log {}\bbS^{xx}_{\lambda}( d(X_t,\bbE_{{}\bbS^{xx}_{\lambda}}(X_t) ) \geq R) = -2 R \log R + [\log(4 \lambda^2 t(1-t) ) +2  ]R + o(R) 
\ee
\end{lemma}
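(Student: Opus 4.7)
The plan is to reduce to a one-dimensional computation via the independence of the two coordinates, and then to use sharp asymptotics for the modified Bessel functions that appear in the transition densities. Under $\bbS^x_\lambda$ the coordinates $X^1$ and $X^2$ are independent continuous-time symmetric rate-$\lambda$ walks on $\Z$, so conditioning on $X_1=x$ factorises as $\{X^1_1=x_1\}\cap\{X^2_1=x_2\}$ and the bridge $\bbS^{xx}_\lambda$ also has independent coordinates. By translation invariance I may assume $x=0$, and by symmetry $\bbE_{\bbS^{00}_\lambda}(X_t)=0$. Writing $\bbQ^{00}$ for the corresponding $1$D bridge, the event $\{d(X_t,0)\geq R\}$ becomes $\{|X^1_t|+|X^2_t|\geq R\}$, which gives the product-type expression
\[
\bbS^{00}_\lambda\bigl(d(X_t,0)\geq R\bigr) \;=\; \sum_{k\geq R}\ \sum_{|i|+|j|=k} \bbQ^{00}(X_t=i)\,\bbQ^{00}(X_t=j).
\]

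From the identity $\bbP(X^1_t=i)=e^{-2\lambda t}I_{|i|}(2\lambda t)$, with $I_n$ the modified Bessel function of the first kind, together with the Markov property, the 1D bridge marginal reads
\[
\bbQ^{00}(X_t=i) \;=\; \frac{I_{|i|}(2\lambda t)\,I_{|i|}(2\lambda(1-t))}{I_0(2\lambda)}.
\]
Plugging in the asymptotic $I_n(z)=(2\pi n)^{-1/2}(ez/2n)^n(1+O(1/n))$ as $n\to\infty$ yields, for $|i|$ large,
\[
\log \bbQ^{00}(X_t=i) \;=\; -2|i|\log|i| + |i|\bigl[\,2+\log(\lambda^2 t(1-t))\bigr] + O(\log|i|).
\]

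To finish, for each $k=|i|+|j|$ I would optimise $-2i\log i-2j\log j$ on $\{i,j\geq 0,\, i+j=k\}$: convexity of $x\log x$ shows that the maximum equals $2k\log 2-2k\log k$ and is attained at $i=j=k/2$. This produces the single-term bound $\exp\bigl(-2k\log k+k[2+\log(4\lambda^2 t(1-t))]+O(\log k)\bigr)$. The lower bound in \eqref{e105} follows by retaining the term $k=R$, $i=j=\lfloor R/2\rfloor$ (with a trivial parity correction). For the upper bound, the inner sum over $|i|+|j|=k$ has only $O(k)$ terms, and the outer sum over $k\geq R$ is super-geometric because the successive ratio of the leading exponential factor is bounded by $\exp(-2\log R+O(1))$ when $k\geq R$; both contributions are polynomial and hence $o(R)$ after taking logarithms. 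The main technical point is to carry the error term in the Bessel asymptotic \emph{uniformly} in $i$, so that the prefactor $(2\pi|i|)^{-1/2}$ and the $(1+O(1/|i|))$ factor are absorbed into $o(R)$ after double summation; classical two-sided estimates of the form $a\,n^{-1/2}(ez/2n)^n \leq I_n(z) \leq b\,(ez/2n)^n$, valid for $n\geq 1$ at fixed $z$, are amply sufficient and present no further obstacle.
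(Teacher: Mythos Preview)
Your proposal is correct and follows essentially the same route as the paper. The paper's printed proof merely refers back to the method of Lemma~\ref{countingest} together with the exact computations available on the square lattice; the detailed argument the author has in mind (and which appears in a commented-out block in the source) is precisely yours: factorise via the independence of the two coordinates under the bridge, express the one-dimensional bridge marginal through modified Bessel functions $I_n(2\lambda t)I_n(2\lambda(1-t))/I_0(2\lambda)$, use the large-$n$ asymptotic $I_n(z)\sim (2\pi n)^{-1/2}(ez/2n)^n$, optimise $-2i\log i-2j\log j$ on $\{i+j=k\}$ by convexity to produce the $\log 4$ term, and then check that the inner and outer sums contribute only polynomially, hence $o(R)$ after taking logarithms.
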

Lemma \ref{lastlemma} is not directly implied by Lemma \ref{countingest}.
However, one can derive its proof by going along the same lines of the proof of Lemma \ref{countingest} and use the exact computations that can be performed for the square lattice. A detailed proof is available at...

\bibliographystyle{plain}
\bibliography{/Users/giovanniconforti/Desktop/command&Ref/Ref}
\Addresses
\end{document}